\documentclass{amsart}

\usepackage{amsfonts,amssymb,amsmath,stmaryrd,mathtools,amsthm}
\usepackage[english]{babel}
\usepackage[super]{nth}
\usepackage{lmodern}
\usepackage[latin1]{inputenc}
\usepackage[T1]{fontenc}
\usepackage{enumerate}
\usepackage[all]{xy}
\usepackage{mathdots}

\usepackage{subcaption}
\usepackage{graphicx, transparent}
\usepackage{arydshln}
\usepackage{nicefrac}
\usepackage{multirow}
\usepackage[hyphens]{url}

\DeclareMathOperator{\N}{\mathbb{N}}
\DeclareMathOperator{\R}{\mathbb{R}}
\DeclareMathOperator{\Z}{\mathbb{Z}}

\DeclareMathOperator{\Q}{\mathbb{Q}}
\DeclareMathOperator{\C}{\mathbb{C}}
\DeclareMathOperator{\Br}{Br}
\DeclareMathOperator{\PBr}{PBr}
\DeclareMathOperator{\EBr}{\mathit{E}Br}
\DeclareMathOperator{\BBr}{\mathit{B}Br}

\DeclareMathOperator{\Conf}{Conf}
\DeclareMathOperator{\PConf}{PConf}
\DeclareMathOperator{\id}{id}
\DeclareMathOperator{\Hur}{Hur}
\DeclareMathOperator{\CHur}{CHur}

\DeclareMathOperator{\BL}{\mathit{BL}}
\DeclareMathOperator{\FP}{F\mathcal{P}_{\underline\delta}^{\underline\xi}}
\DeclareMathOperator{\cP}{\mathcal{P}_{\underline\delta}^{\underline\xi}}
\DeclareMathOperator{\FO}{F\mathcal{O}_{\underline\delta}^{\underline\xi}}
\DeclareMathOperator{\cO}{\mathcal{O}_{\underline\delta}^{\underline\xi}}
\DeclareMathOperator{\Star}{Star}

\DeclareMathOperator{\conn}{conn}
\DeclareMathOperator{\Link}{Link}
\DeclareMathOperator{\DEL}{\underline\delta}
\DeclareMathOperator{\XI}{\underline\xi}
\DeclareMathOperator{\Tor}{Tor}
\DeclareMathOperator{\plant}{\mathcal{O}^{[\mathit{n},\underline \xi]}}
\DeclareMathOperator{\rplant}{|\mathcal{O}^{[\mathit{n},\underline \xi]}|}
\DeclareMathOperator{\plantq}{\mathcal{O}_{\mathit{q}}^{[\mathit{n},\underline \xi]}}
\DeclareMathOperator{\plantqi}{\mathcal{O}_{\mathit{q}-1}^{[\mathit{n},\underline \xi]}}

\DeclareMathOperator{\coker}{coker}
\DeclareMathOperator{\Diff}{Diff}
\DeclareMathOperator{\Aut}{Aut}

\DeclareMathOperator{\BG}{\mathit{BG}}
\DeclareMathOperator{\EG}{\mathit{EG}}
\DeclareMathOperator{\ord}{ord}

\DeclareMathOperator{\GL}{GL}

\DeclareMathOperator{\Hom}{Hom}

\DeclareMathOperator{\cplant}{\mathbf{O}^{[\mathit{n},\underline \xi]}}
\DeclareMathOperator{\cplantq}{\mathbf{O}_{\mathit{q}}^{[\mathit{n},\underline \xi]}}
\DeclareMathOperator{\cplantqi}{\mathbf{O}_{\mathit{q}-1}^{[\mathit{n},\underline \xi]}}

\DeclareMathOperator{\Map}{Map}

\DeclareMathOperator{\bd}{\mathbf{d}}
\DeclareMathOperator{\sgn}{sgn}
\DeclareMathOperator{\cc}{\mathbf{c}}
\DeclareMathOperator{\dd}{{d}}
\DeclareMathOperator{\KK}{{\mathcal{K}}}
\DeclareMathOperator{\im}{im}

\theoremstyle{plain}
\numberwithin{equation}{section}

\newtheorem{theorem-main}{Theorem}
\newtheorem{proposition}[equation]{Proposition}
\newtheorem{corollary}[equation]{Corollary}
\newtheorem{lemma}[equation]{Lemma}
\newtheorem{theorem}[equation]{Theorem}
\newtheorem*{theorem*}{Theorem}

\theoremstyle{remark}
\newtheorem{remark}[equation]{Remark}
\newtheorem{example}[equation]{Example}

\theoremstyle{definition}
\newtheorem{definition}[equation]{Definition}

\usepackage{tikz}

\usepackage{hyperref}

\title{Plant Complexes and Homological Stability for Hurwitz Spaces}
\author{J.~Frederik~Tietz}
\address{J.~Frederik Tietz, Institut f\"ur Algebraische Geometrie, Leibniz Universit\"at Hannover, Welfengarten~1, 30167 Hannover, Germany}
\email{ftietz@math.uni-hannover.de}
\subjclass[2010]{57M12, 14H15, 55R80, 05E45, 20F36}
\date{\today}


\begin{document}

\begin{abstract}
We study Hurwitz spaces with regard to homological stabilization. By a Hurwitz space, we mean a moduli space of branched, not necessarily connected coverings of a disk with fixed structure group and number of branch points. We choose a sequence of subspaces of Hurwitz spaces which is suitable for our investigations.

In the first part (Sections~\ref{plants} to~\ref{combinatorics}), we introduce and study plant complexes, a large new class of simplicial complexes, generalizing the arc complex on a surface with marked points. In the second part (Sections~\ref{hurwitz-spaces} to~\ref{application}), we generalize a result from \cite{0912.0325} by showing that homological stabilization of our sequence of Hurwitz spaces depends only on properties of their zeroth homology groups.
\end{abstract}

\maketitle

\addtocontents{toc}{\protect\setcounter{tocdepth}{1}}
\tableofcontents

\section{Introduction}

Understanding the topology of moduli spaces is a key aspect in order to grasp the behavior in families of the parametrized objects. Now, moduli spaces often come in sequences, such as the moduli spaces $\mathcal{M}_{g}$ of Riemann surfaces of genus~$g$. 
In some cases, such sequences satisfy \emph{homological stability}. Indeed, by \cite{MR786348}, the homology groups $H_{*}(\mathcal{M}_{g}; \Q)$ are independent of $g$ in a range of dimensions growing with $g$. The stable rational cohomology of $\mathcal{M}_{g}$ is the subject of \emph{Mumford's conjecture}, proved in~\cite{MR2335797}.

In recent years, the study of (co-)homological stability phenomena has been of great interest in algebraic and geometric topology as well as algebraic geometry. Classical results include stabilization for the group homology of the sequences of symmetric groups $\mathfrak{S}_{n}$ (\cite{MR0112134}), general linear groups $\GL_{n}$ (\cite{Maazen}, \cite{MR586429}), and Artin braid groups $\Br_{n}$ (\cite{MR0274462}). The theorem for braid groups builds a bridge to moduli spaces: $\Br_{n}$ is classified by the unordered configuration space $\Conf_{n}$, which parametrizes subsets of size $n$ of a disk $D$. Hence, the sequence $\{\Conf_{n}\}$ is homologically stable.

In fact, several homological stability theorems are concerned with sequences of (classifying spaces of) groups. There is by now a standard approach (cf.~\cite{MR2736166}) to the proof of such results which requires a highly connected simplicial complex with a nicely behaved group action in order to study the associated spectral sequence.

Hurwitz spaces as moduli spaces of branched covers of $\C$ appeared in the second half of the 18th century in the work of Hurwitz (\cite{MR1510692}). Their properties helped proving the connectivity of $\mathcal{M}_{g}$ in \cite{MR0245574}, and they play an important role in arithmetic applications such as the Regular Inverse Galois Problem (cf.~\cite{MR1119950}).

In this paper, we study the topology of Hurwitz spaces with respect to homological stabilization. It is worth mentioning the proximity of these spaces to both moduli spaces of Riemann surfaces and configuration spaces: The total space of a branched covering of $\C$ is a Riemann surface, whereas the branch locus defines an element of a configuration space. Having the homological stability theorems for both $\mathcal{M}_{g}$ and $\Conf_{n}$ in mind, it seems worthwhile to study Hurwitz spaces in this direction.

\subsection*{Braids and configurations}
Let $n\in\N$. By $\Br_{n}$, we denote the classical \emph{(Artin) braid group}, generated by $\sigma_{1},\ldots, \sigma_{n-1}$, subject to the relations
\begin{equation*}\label{braidrel}
\begin{alignedat}{2}
 \sigma_i \sigma_{i+1} \sigma_i &= \sigma_{i+1}\sigma_i\sigma_{i+1},\:\:\:\: &&1\leq i \leq n-2, \\
\sigma_i\sigma_j &= \sigma_j\sigma_i, &&|i-j|\geq 2,
\end{alignedat}
\end{equation*}
cf.~\cite{MR3069440}. The \emph{pure braid group} $\PBr_{n} \subset \Br_{n}$ is the kernel of the surjection $\Br_{n} \to \mathfrak{S}_{n}$ which maps $\sigma_{i}$ to the transposition $(i, i+1)$. 
If $\underline\zeta = (\zeta_{1}, \ldots, \zeta_{t})$ is a partition of~$n$, the \emph{colored braid group with coloring $\underline\zeta$} is defined as the kernel of the map $\Br_{n} \to \mathfrak{S}_{n}/ \mathfrak{S}_{\underline\zeta}$, with $\mathfrak{S}_{\underline\zeta} \cong \mathfrak{S}_{\zeta_{1}} \times \ldots \times \mathfrak{S}_{\zeta_{t}}$. For a presentation of these groups, cf.~\cite{MR1465028} and \cite{MR2607077}.

By \cite{MR0141126}, the \emph{(unordered) configuration space} $\Conf_{n}$ of $n$ points in (the interior of) a two-dimensional closed disk $D$ is of type $K(\Br_{n}, 1)$. Associated to the inclusion $\PBr_{n} \subset \Br_{\underline\zeta} \subset \Br_{n}$ of subgroups, there is a sequence of covering space maps
$$
\PConf_{n} \to \Conf_{\underline\zeta} \to \Conf_{n},
$$
between aspherical spaces, where $\PConf_{n}$ is the \emph{ordered configuration space} of $n$ points in $D$. The space $\Conf_{\underline\zeta} = \PConf_{n}/\mathfrak{S}_{\underline\zeta}$ is called the \emph{colored configuration space} of $n$ points in $D$ with coloring $\underline\zeta$.

By \cite{MR0274462}, for any $p\geq 0$, we have $$H_{p}(\Conf_{n}; \Z) \cong H_{p}(\Conf_{n+1};\Z)$$ for $n\geq 2p-2$. If $\XI \in\N^{t}$ and $n\cdot\XI = (n\xi_{1}, \ldots, n\xi_{t})$, it follows from \cite{1312.6327} that for any $p\geq 0$, 
\begin{equation}\label{tran}
H_{p}(\Conf_{n\cdot\XI};\Z) \cong H_{p}(\Conf_{(n+1)\cdot\XI}; \Z)
\end{equation}
for $n\geq \frac{2p}{\min\XI}$, where $\min\XI$ denotes the smallest entry of $\XI$. Notable homological stability results for configuration spaces of surfaces include \cite{MR0358766}, \cite{MR533892}, \cite{MR2909770}, and \cite{MR3032101}, among others.

\subsection*{Homological stability for Hurwitz spaces}

Let $n\in\N$. Furthermore, let $G$ be a finite group, $c = (c_{1}, \ldots, c_{t})$ a tuple of $t$ distinct non-trivial conjugacy classes in $G$, and $\XI = (\xi_{1}, \ldots, \xi_{t})\in\N^{t}$ a partition of $\xi\in\N$. We replace $\C$ by a closed two-dimensional disk~$D$ and consider \emph{marked $n\cdot\XI$-branched $G$-covers of $D$}: We prescribe the covers' \emph{shape vectors} $n\cdot\XI$. With this, we mean that for $i=1,\ldots,t$, exactly $n\xi_{i}$ local monodromies around the branch points must lie in $c_{i}$. We refer to~Section~\ref{hurwitz-spaces} for a more thorough introduction to this kind of branched covers. We denote the space of such covers by $\Hur_{G,n\cdot\XI}^{c}$. This Hurwitz space must be a covering space of $\BBr_{n\cdot\XI} \cong \Conf_{n\cdot\XI}$ with fiber $\cc^{n} = (c_{1}^{\xi_{1}} \times \ldots \times c_{t}^{\xi_{t}})^{n}$, thus
$$
\Hur_{G, n\cdot\XI}^{c} = \EBr_{n\cdot\XI} \times_{\Br_{n\cdot\XI}} \cc^{n},
$$
up to homotopy, where the $\Br_{n\cdot\XI}$-action on $\cc^{n}$ is given by the restriction of the full \emph{Hurwitz action} of $\Br_{n\xi}$ on $G^{n\xi}$ described in~(\ref{hurwitz-action}).

The prior homological stability result for Hurwitz spaces deals with the case where $c \subset G$ is a single conjugacy class and $\XI = 1 \in\N$.
A conjugacy class $c \subset G$ is called \emph{non-splitting} if $c$ generates $G$ and for all subgroups $H\subset G$, $c\cap H$ is either empty or a conjugacy class in $H$.

\begin{theorem*}[\textsc{Ellenberg--Venkatesh--Westerland}, \cite{0912.0325}]
Let $c \subset G$ be a non-splitting conjugacy class. Let $A$ be a field of characteristic zero or prime to the order of $G$. Then there are positive constants $a$, $b$, $d$ such that for all $p\geq 0$,
$$
H_{p}(\Hur_{G,n}^{c}; A) \cong H_{p}(\Hur_{G,n+d}^{c};A)
$$
for $n > ap+b$.
\end{theorem*}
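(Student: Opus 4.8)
The plan is to follow the by-now standard machinery for proving homological stability via a highly connected complex with a compatible group action, in the spirit of \cite{MR2736166} and \cite{0912.0325}. Since $\Hur_{G,n}^{c} = \EBr_{n}\times_{\Br_{n}} c^{n}$ (up to homotopy), the key geometric input is a simplicial complex on which $\Br_{n}$ acts, whose quotient relates to the Hurwitz spaces for smaller $n$ and whose connectivity grows linearly in $n$. First I would introduce such a complex --- the \emph{arc complex} picture on a disk with $n$ marked points carrying labels in $c$, where a vertex is (the homotopy class of) an embedded arc from a fixed boundary point to a marked point, and a $p$-simplex is a collection of $p+1$ such arcs that are disjoint (except at the boundary point) and appear in a prescribed cyclic order near the boundary. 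Here is exactly where the machinery of Part~I of the paper should be invoked: I expect \emph{plant complexes} to provide precisely this combinatorial model together with the high-connectivity estimate, so I would quote the connectivity theorem for the relevant plant complex and check that it specializes to the disk-with-labelled-points situation.

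Next I would set up the spectral sequence of the action. Cutting along an arc in a $p$-simplex reduces the number of marked points; more precisely, the stabilizer of a $p$-simplex should be identified, up to homotopy, with a Hurwitz space $\Hur_{G,n-p-1}^{c}$ of the ``smaller'' disk, possibly with a modified (partial) coloring coming from the local monodromies along the cut arcs, and the non-splitting hypothesis on $c$ is what guarantees that cutting does not leave the allowed monodromy set --- this is the role the hypothesis plays, ensuring that $c\cap H$ stays a single conjugacy class so that the smaller Hurwitz space has the same shape. The isotropy spectral sequence associated to the action of $\Br_{n}$ on this complex then has $E^{1}$-page built from the homology of these smaller Hurwitz spaces (with the field coefficients $A$, of characteristic zero or coprime to $|G|$, so that averaging over finite stabilizers is available and the relevant transfer/Shapiro arguments go through). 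The differentials in low rows assemble into a chain complex whose exactness in a range is equivalent to the desired stability isomorphism; the high connectivity of the plant complex is exactly what makes the spectral sequence collapse onto this complex in the relevant range.

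The main obstacle, as in all arguments of this type, is a dimension-shifting/inductive bookkeeping step: one must set up the double induction (on homological degree $p$ and on $n$) so that the $E^{1}$-differential in the row of interest is identified with the stabilization map $H_{p}(\Hur_{G,n-\bullet}^{c};A)\to H_{p}(\Hur_{G,n-\bullet+d}^{c};A)$, and then show this map is injective (from one half of the spectral sequence) and surjective (from the other half) once $n>ap+b$. Concretely, the constants $a$, $b$, $d$ must be chosen compatibly with the slope of the connectivity estimate coming from Part~I; $d$ in particular is forced by the periodicity inherent in the combinatorics of the coloring $c$ (the ``$d$'' already present in the Ellenberg--Venkatesh--Westerland statement). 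I expect the cleanest route is to phrase the whole argument as in \cite{0912.0325}, replacing their ad hoc arc complex by the plant complex and citing its connectivity; the bulk of the new work is then checking that stabilizers really are the asserted smaller Hurwitz spaces and that the $E^{1}$-differentials are the stabilization maps, which is a careful but essentially formal verification.
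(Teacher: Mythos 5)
The statement you are asked to prove is actually quoted verbatim from Ellenberg--Venkatesh--Westerland and is \emph{not} re-proved in this paper; the paper instead proves a generalization (Theorem~\ref{the-theorem}) and then observes that the EVW case follows by combining that theorem with \cite[Lemma~3.5]{0912.0325}, which verifies that for a single non-splitting class the hypothesis $D_R(U)<\infty$ holds. So there is nothing here for you to reproduce, only to cite --- but your sketch is still worth examining as an outline of how EVW's (and this paper's) argument goes.

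Your high-level scaffolding is right: a highly connected complex with a braid action, an isotropy spectral sequence whose columns are built from homology of smaller Hurwitz spaces, and an induction on homological degree. But there is a genuine gap in the step you call ``a careful but essentially formal verification.'' You propose to identify the $E^{1}$-differentials with stabilization maps and then argue injectivity/surjectivity separately from the two halves of the spectral sequence. That is not what happens: the $E^{1}$-differentials are alternating sums of maps which, after choosing orbit representatives and stabilizer identifications, become multiplication by \emph{various} degree-one elements of the ring of connected components $R=\bigoplus_n H_{0}(\Hur_{G,n}^{c};A)$, twisted by braid elements $\tau^{\omega}_{i,q}$ coming from the face maps of the complex --- not a single stabilization map. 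The entire point of the algebraic machinery in \cite[\S 4]{0912.0325} (and Lemma~\ref{spseq-translation}, Definition~\ref{k-complexes}, Lemma~\ref{killing}, Proposition~\ref{homprop-4.2} here) is to repackage this differential as the $\mathcal{K}$-complex of a graded $R$-module, prove a chain-homotopy lemma showing $H_q(\mathcal{K}(R))$ is killed by $R_{>0}$, and then deduce degree bounds on $H_q(\mathcal{K}(M_p))$ from the finiteness of $D_R(U)$. Your sketch has no replacement for this; ``injective from one half, surjective from the other'' is not available when the differential is a sum of incomparable multiplications rather than a single map.

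Relatedly, your account of where the non-splitting hypothesis enters is off. You suggest it ``guarantees that cutting does not leave the allowed monodromy set,'' i.e.\ that stabilizers have the same shape. But the stabilizer of a $q$-simplex is automatically $\Br_{n-q-1}$ acting on $c^{n-q-1}$ regardless of any hypothesis on $c$; the shape is preserved by construction, as Lemma~\ref{stabilizers} shows. Non-splitting is used at a completely different point: it is what makes the ring $R$ well-behaved, namely it is exactly what \cite[Lemma~3.5]{0912.0325} uses to show $R/UR$ and $R[U]$ are concentrated in bounded degree, so $D_R(U)<\infty$. Without a finite $D_R(U)$ the induction never closes, and no amount of connectivity of the complex will save it. In short: the geometric skeleton of your proposal is correct, but the load-bearing step --- controlling the spectral-sequence differentials via the graded-ring structure on $\pi_0$ and the finiteness of $D_R(U)$ --- is missing, and that is where both the non-splitting hypothesis and the constant $d$ (the degree of $U$) actually come from.
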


In Section~\ref{homstabhurwitz}, we follow the ideas of Sections~4 through~6 of \cite{0912.0325}. The main technical complication in comparison to the prior result is the fact that the colored braid group action on the set of $q$-simplices of the \emph{colored plant complexes} we introduce in Section~\ref{plants} is in general not transitive.

In Section~\ref{hurwitz-spaces}, we explain why the $A$-module
$$
R = \bigoplus_{n\geq 0} H_{0}(\Hur_{G,n\cdot\XI}^{c};A)
$$
has the structure of a graded ring, where the grading is in the $n$-variable. For a central homogeneous element $U \in R$, we define $D_{R}(U) = \max\{\deg R/UR, \deg R[U]\}$, where $R[U]$ is the $U$-torsion in $R$.

Our main theorem is proved in Section~\ref{homstabhurwitz}:

\begin{theorem-main}\label{the-theorem}
Suppose there is a central homogeneous element $U\in R$ of positive degree such that $D_{R}(U)$ is positive and finite. Then, for any $p\geq 0$, multiplication by~$U$ induces an isomorphism
$$
H_{p}(\Hur_{G,n\cdot \XI}^{c}; A) \overset{\sim}{\to} H_{p}(\Hur_{G,(n+\deg U)\cdot \XI}^{c}; A)
$$
whenever $n > (8 D_{R}(U) + \deg U)p + 7 D_{R}(U) + \deg U$.
\end{theorem-main}

Our theorem generalizes the prior theorem to the case of multiple conjugacy classes. Indeed, for $c$ a single non-splitting conjugacy class, \cite[Lemma~3.5]{0912.0325} shows that the condition of Theorem~\ref{the-theorem} is satisfied.

We say that $G$ is \emph{invariably generated} by $c$ if for all choices of elements $g_{i} \in c_{i}$, $i = 1, \ldots, t$, the group generated by $g_{1}, \ldots, g_{t}$ is equal to $G$. 
We denote by $\partial U$ the product of the entries of a vector $U \in \cc^{d}$. Note that such a vector may be identified with a homogeneous element of $R$, cf.~Remark~\ref{comb-descr}.
Applying Theorem~\ref{the-theorem}, a result from \cite{1212.0923}, and (\ref{tran}), we are able to deduce a concrete homological stability statement in the case where $c$ invariably generates $G$. 

\begin{theorem-main}[Theorem~\ref{thm-connected}]
Assume $c$ invariably generates $G$. Then, for any $U \in \cc^{d}$ with $\partial U = \id$ and any $p \geq 0$, there are isomorphisms
\begin{align*}
H_{p}(\Hur_{G, n\cdot\XI}^{c}; \Z ) &\cong H_{p}(\Hur_{G, (n+d)\cdot\XI}^{c}; \Z )\\
H_{p}(\Hur_{G, n\cdot\XI}^{c}; \Q ) &\cong H_{p}(\Hur_{G, (n+1)\cdot\XI}^{c}; \Q )
\end{align*}
for $n> (8D_{R}(U) + d)p+7D_{R}(U) + d$, and a constant $b\in\N$ such that
$$
H_{p}(\Hur_{n\cdot\XI}^{c};\Q) \cong H_{p}( \Conf_{n\cdot\XI}; \Q ) \otimes_{\Q} \Q^{b}
$$
in the same range.
\end{theorem-main}

\subsection*{Simplicial complexes in homological stability proofs}

In the study of homological stability, simplicial complexes are ubiquitous. Given a sequence of groups $\{G_{n}\}$ and highly connected simplicial complexes $\mathcal{O}^{n}$ for all $n$ such that $G_{n}$ acts transitively on the set of $q$-simplices of $\mathcal{O}^{n}$ for all $q$ and with stabilizers isomorphic to $G_{n-q-1}$, the spectral sequence associated to the semi-simplicial space $\EG_{n} \times_{G_{n}} \mathcal{O}^{n}$ yields a description of the homology of $\BG_{n}$ in terms of the homology of spaces $\BG_{m}$, for $m<n$. This makes inductive arguments possible.

The \emph{ordered arc complex} (cf.~\cite{MR3135444}) turns out to have the right properties for mapping class groups of surfaces (leading to the homological stability theorem for $\mathcal{M}_{g}$). For the Artin braid group, the \emph{arc complex} (though not used in the original article \cite{MR0274462}) is a suitable choice. This complex (which is contractible by \cite{Damiolini}) has also been employed in the homological stability proof in \cite{0912.0325}.

We run into a couple of problems when examining homological stability for Hurwitz spaces: First, the Hurwitz spaces we consider are usually disconnected. This can be fixed by using the fact that they are finite covers of $K(G,1)$ spaces, where $G$ is a {colored braid group}. Secondly, there is no highly connected simplicial complex at hand which admits a well-behaved colored braid group action. For this purpose, we define and investigate  \emph{plant complexes} in Sections~\ref{plants} to~\ref{combinatorics}. Thirdly, the group action on these complexes is generally not transitive. This last point makes a more extensive homological analysis in Section~\ref{homstabhurwitz} necessary.

The definition of plant complexes generalizes both the arc complex and the \emph{fern complex} from~\cite{1410.0923}, hence the name. In Section~\ref{combinatorics}, we focus on a specific class of \emph{colored} plant complexes in order to obtain the following result which is essential to our homological stability proof:

\begin{theorem-main}[Theorem~\ref{delta-conn}, Lemma~\ref{orbits}, Lemma~\ref{stabilizers}]\label{thm-5}
For $n\in\N$ and $\XI\in\N^{t}$, there exists an $(n-1)$-dimensional and at least $\left( \lfloor\frac{n}{2}\rfloor -2\right)$-connected simplicial complex which admits a generally non-transitive action by the colored braid group $\Br_{n\cdot\XI}$. The stabilizer of a $q$-simplex under this action is isomorphic to $\Br_{(n-q-1)\cdot\XI}$.
\end{theorem-main}

\subsection*{Acknowledgements}
This paper contains the central result of my 2016 Ph.D.~thesis. I would particularly like to thank my advisor Michael L\"onne for all the inspiring discussions. Furthermore, I am thankful to Craig Westerland for his supportive and helpful answers to my questions, and to Matthias Zach for numerous mathematical dialogues. I would like to appreciate the excellent mathematical environment and the pleasant colleagues I was offered by the Institute of Algebraic Geometry at the Leibniz University of Hannover over the last three years.


\section{Plants and plant complexes}\label{plants}

Let $S$ be a connected surface with non-empty boundary and $\DEL = (\delta_1, \ldots, \delta_t)\in\N^{t}$ a partition of $\delta = \sum_{i=1}^t \delta_i$. Let furthermore $\Delta$ be a set of $\delta$ points in the interior of $S$, partitioned as $\Delta = \Delta_1 \sqcup \ldots \sqcup \Delta_t$, where $|\Delta_{i}| = \delta_{i}$ for all $i=1,\ldots, t$.
Finally, let $*$ be a fixed point in $\partial S$.

An \emph{arc} is a smooth embedding $\gamma\colon I \to S$ with $\gamma(0) = *$ and $\gamma(1) \in \Delta$, meeting the boundary transversally, and with interior entirely in $S\setminus(\partial S\cup \Delta)$.

\begin{definition}\label{def-plant}
Let
 ${\XI} = (\xi_1, \ldots, \xi_t)\in \N^{t}$ and $\xi = \sum_{i=1}^{t}\xi_i $.
\begin{enumerate}[(i)]
\item
A $\XI$\emph{-plant} in $(S,\Delta)$ is an unordered $\xi$-tuple of arcs in $S$ which only meet at~$*$, where for some permutation $\sigma\in \mathfrak{S}_{t}$, exactly $\xi_i$ arcs end at points of $\Delta_{\sigma(i)}$, for $i=1,\ldots,t$.
The tuple $\XI$ is called the \emph{pattern}.

\item A \emph{colored $\XI$-plant} in $(S,\Delta)$ is a $\XI$-plant in $(S,\Delta)$ with the requirement that for $i=1, \ldots t$, exactly $\xi_{i}$ arcs end at points of $\Delta_{i}$.

\item Two $\XI$-plants $v, w$ in $(S,\Delta)$ are called \emph{equivalent} if there is an isotopy of $S$ fixing $\partial S \cup\Delta$ pointwise that transforms one plant into the other.

\item
For any plant $u$,
we write $u^\circ = u\setminus (\{*\} \cup \Delta)$ for its \emph{interior}.

\item
We say that two plants (or arcs) $v$ and $w$ (not necessarily of the same pattern) have $s$ \emph{points of intersection}
if $s$ is the minimal number such that there are are plants $v'$ and $w'$ equivalent to $v$ and $w$, respectively, such that $v'^{\circ}$ and $w'^{\circ}$ share $s$ points in $S\setminus(\Delta\cup\partial S)$. We write $v.w=s$. For $v.w=0$, we call $v$ and $w$ \emph{disjoint}.
\end{enumerate}
\end{definition}

\begin{figure}
\centering
\captionsetup[subfigure]{labelformat=empty}
\begin{subfigure}[b]{0.31\linewidth}
\centering
\includegraphics[scale=0.14]{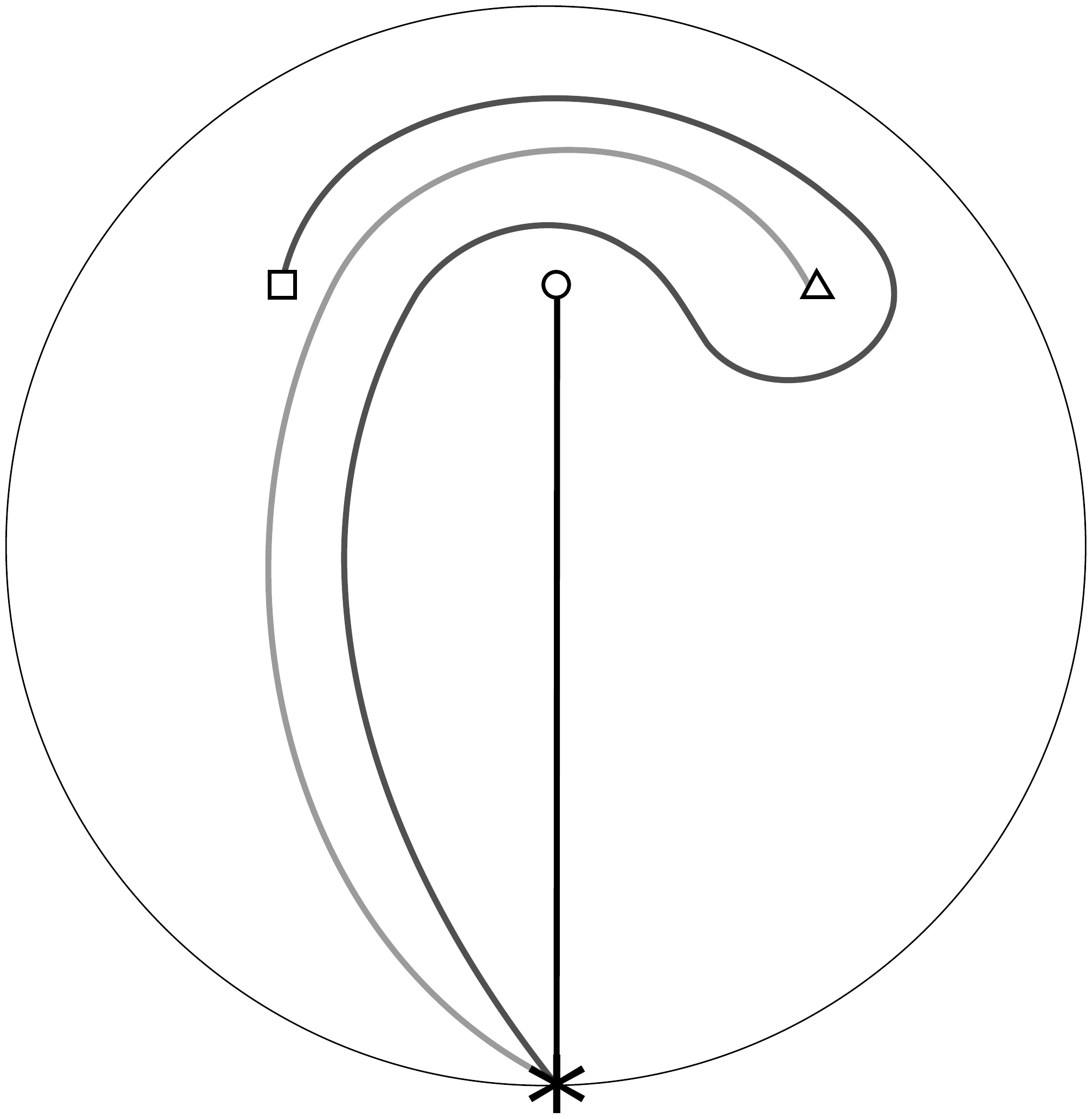}
\caption{$\DEL = \XI = (1,1,1)$}
\end{subfigure}
\begin{subfigure}[b]{0.31\linewidth}
\centering
\includegraphics[scale=0.17]{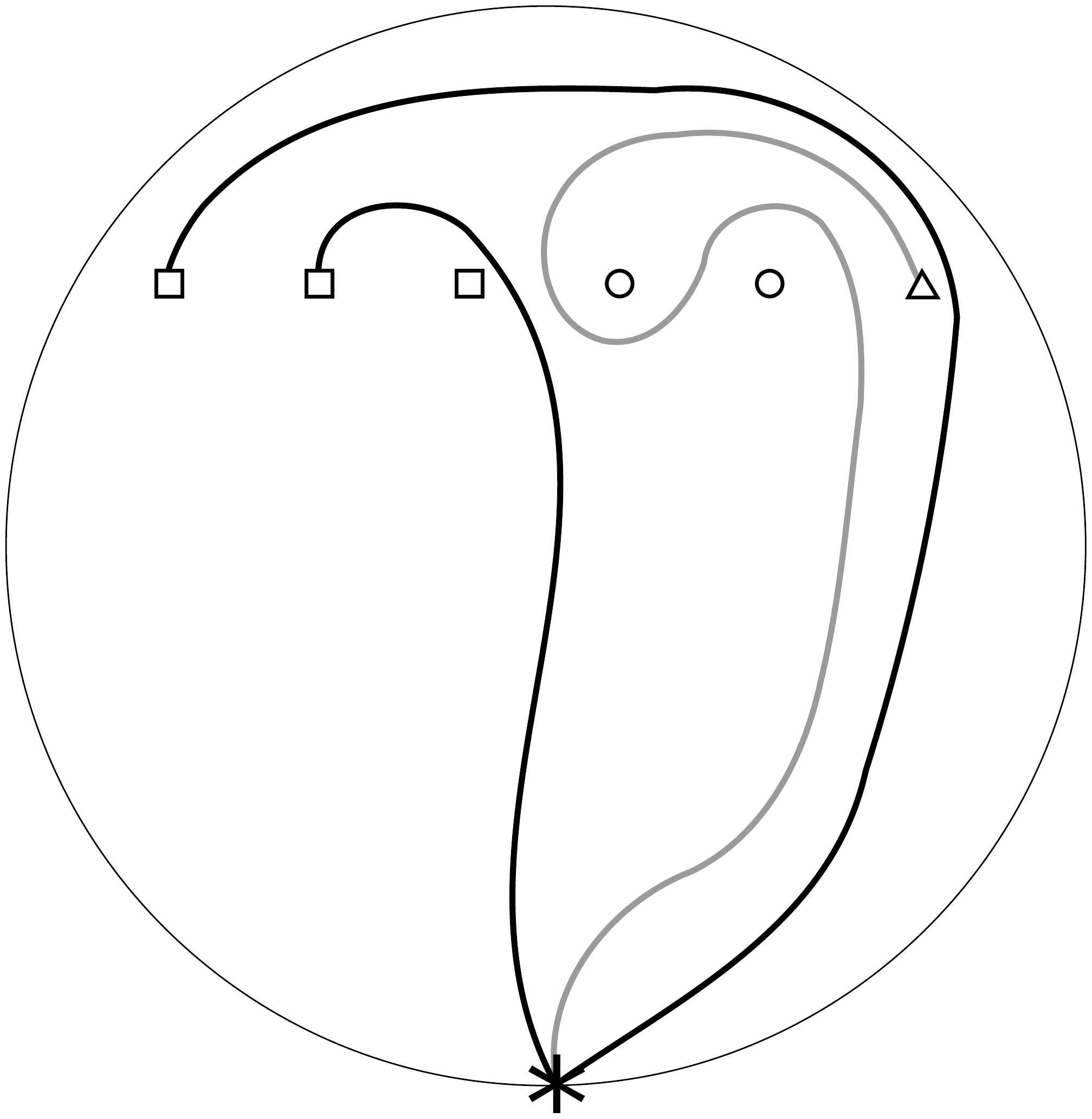}
\caption{$\DEL = (3,2,1)$, $\XI = (0,2,1)$}
\end{subfigure}
\begin{subfigure}[b]{0.31\linewidth}
\centering
\includegraphics[scale=0.17]{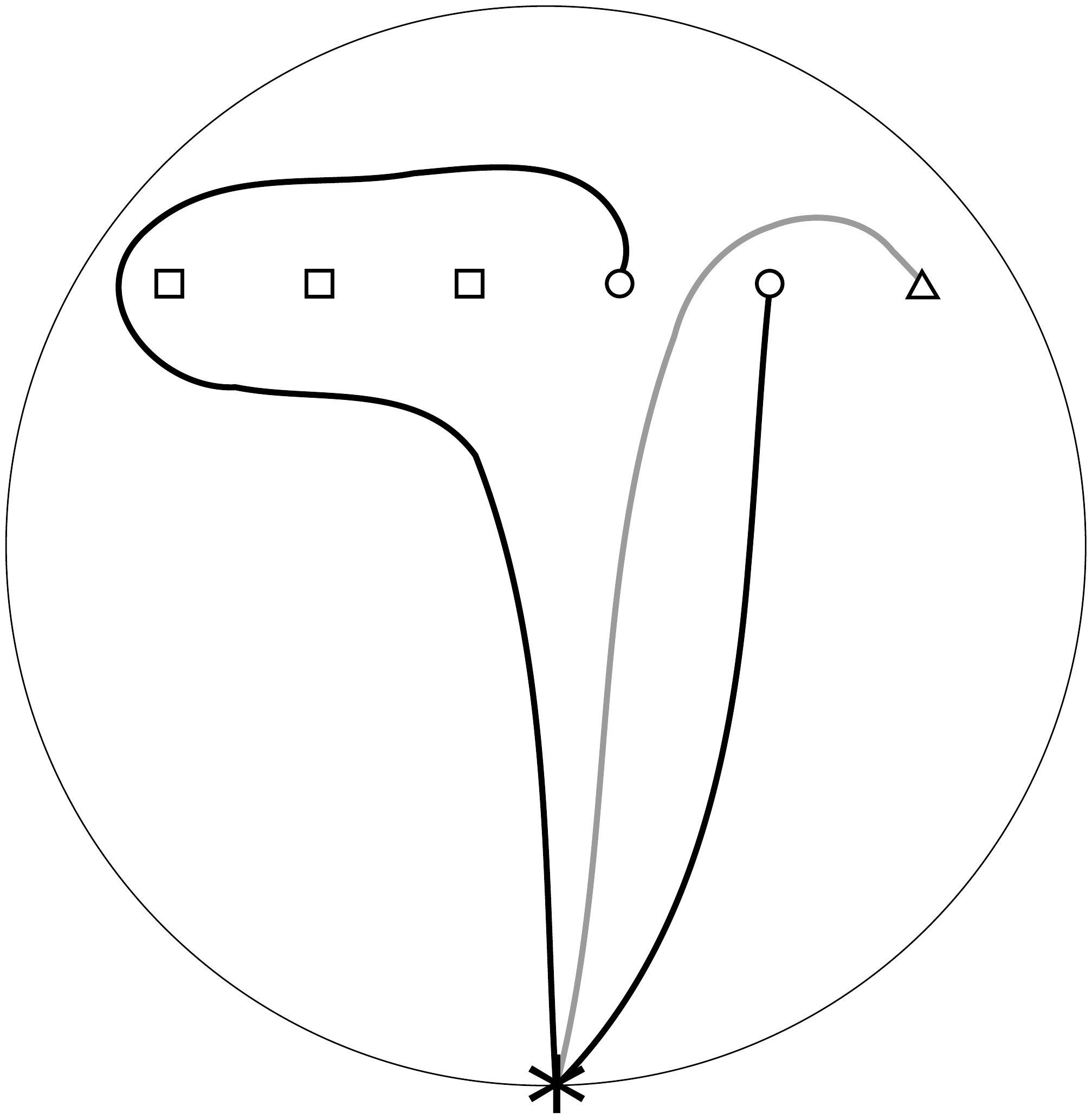}
\caption{$\DEL = (3,2,1)$, $\XI = (0,2,1)$} 
\end{subfigure}
\caption{Examples of $\XI$-plants in a disk.}
\label{first-plants}
\end{figure}

First examples of plants in a two-dimensional closed disk $D$ can be seen in Figure~\ref{first-plants}. Given $\DEL$ and $\XI$ as in the captions, the left and right plants are colored. Changing $\XI$ to $(2,0,1)$, the middle plant is colored as well. 

\begin{lemma}\label{intersection_lemma}
Let $v = (a_1, \ldots, a_\zeta)$ and $w = (b_1, \ldots, b_\xi)$ be plants in $(S, \Delta)$ with arbitrary patterns.
The product $v.w$ is finite and arcwise distributive, i.e., we have $v.w = \sum_{i=1}^\zeta\sum_{j=1}^\xi a_i.b_j.$
\end{lemma}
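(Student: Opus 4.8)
The plan is to establish two things: first, that the geometric intersection number $v.w$ is finite; second, that it equals the sum $\sum_{i,j} a_i.b_j$ of the pairwise arc intersection numbers. Finiteness is the easier half: each plant is a finite union of smoothly embedded arcs meeting only at $*$, so after a small isotopy putting $v$ and $w$ into general position away from $\{*\}\cup\Delta$, the two compact one-dimensional sets $v^\circ$ and $w^\circ$ meet transversally in finitely many points; hence $v.w$ is bounded above by some finite number and the infimum defining $v.w$ is attained (this is the standard bigon-type argument — isotope to reduce intersections, and the process terminates because the count is a non-negative integer). I would also note that $v.w \le \sum_{i,j} a_i.b_j$ is immediate from the definition: realize each $a_i$ and each $b_j$ by a minimizing representative simultaneously — which is possible since the arcs of a single plant are pairwise disjoint except at $*$ — and the resulting plants $v'$, $w'$ have at most $\sum_{i,j} a_i.b_j$ intersection points. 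The content of the lemma is therefore the reverse inequality $v.w \ge \sum_{i,j} a_i.b_j$.

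For the reverse inequality, I would argue by contradiction via a bigon-elimination / surgery argument. Suppose $v'$ and $w'$ are representatives of $v$ and $w$ realizing the minimum $v.w$, yet for some pair of indices $i_0, j_0$ the restricted count $|a'_{i_0} \cap b'_{j_0}|$ exceeds the minimal value $a_{i_0}.b_{j_0}$. Since $a_{i_0}.b_{j_0}$ is itself realized by some pair of arcs, an innermost-bigon argument on the surface $S$ (using that $S$ has a universal cover, or working directly on $S$ since arcs are simple) produces an embedded bigon bounded by a subarc of $a'_{i_0}$ and a subarc of $b'_{j_0}$ whose interior is disjoint from $a'_{i_0}$ and $b'_{j_0}$ — but possibly not disjoint from the other arcs of $v'$ and $w'$. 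One then pushes $a'_{i_0}$ across this bigon. The key point to check is that this isotopy of $a'_{i_0}$, extended to an ambient isotopy of $S$ fixing $\partial S \cup \Delta$, can be carried out so that it does not create new intersections between $a'_{i_0}$ and any other arc of $v'$ or $w'$ — or at least not a net increase in the total count. If every bigon can be removed this way, then the total count is not minimal unless each pair $(a'_i, b'_j)$ is already minimal, giving the reverse inequality.

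The main obstacle is exactly this last point: the bigon realizing non-minimality of the pair $(a_{i_0}, b_{j_0})$ may have other arcs of $v'$ or $w'$ passing through its interior, so pushing $a'_{i_0}$ across it could trade intersections with $b'_{j_0}$ for intersections with those other arcs. The standard resolution is to choose an \emph{innermost} such bigon — one whose interior meets the fewest arcs, or better, to take among all representatives $v'$, $w'$ realizing $v.w$ one that also lexicographically minimizes the other pairwise counts, and then derive a contradiction: an innermost bigon in this refined sense must have interior disjoint from all arcs, and pushing across it strictly decreases $v.w$. An alternative, cleaner route avoiding bigons entirely is to pass to a hyperbolic metric on the interior of $S$ (possible since $S$ has non-empty boundary, hence carries a complete hyperbolic structure of infinite area) and take geodesic representatives; then each arc of each plant is isotoped to its unique geodesic representative, the geodesic representatives of distinct arcs of one plant are disjoint except at the cusp/boundary, geodesics intersect minimally and pairwise-transversally, and the total intersection count of the geodesic plants simultaneously realizes every pairwise minimum $a_i.b_j$. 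This geodesic argument is the one I would actually write, citing it is the reason I expect the proof to be short; the subtlety to be careful about is handling the common endpoint $*$ and the endpoints in $\Delta$ (one works with geodesics emanating from a fixed boundary point or a cusp, which is routine).
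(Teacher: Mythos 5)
You have the two directions of the inequality swapped, and this is a genuine conceptual error. Unwinding the definition: a representative pair $(v',w')$ realizing $v.w$ automatically gives \emph{some} realization of each pair $(a_i,b_j)$, so $v.w = \sum_{i,j}|a_i'^\circ\cap b_j'^\circ| \ge \sum_{i,j} a_i.b_j$ for free --- that is the easy direction, and it is exactly what the paper dismisses as ``clear.'' The substantive content is $v.w \le \sum_{i,j} a_i.b_j$, i.e., that one can find simultaneous representatives of $v$ and $w$ (still forming valid plants with pairwise disjoint arcs except at $*$) in which \emph{every} pair $(a_i,b_j)$ is individually in minimal position. Your claim that this $\le$ direction is ``immediate from the definition'' is not an argument: knowing that the arcs of a single plant are disjoint except at $*$ says nothing about whether independently chosen pairwise-minimizing representatives can coexist as plants. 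That is precisely the lemma's content, not a triviality.

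The good news is that the surgery argument you then outline --- take $(v',w')$ realizing $v.w$, locate an innermost bigon between some $a'_{i_0}$ and $b'_{j_0}$ whose interior avoids all other arc segments, push across, and contradict minimality --- is essentially what the paper does (the paper chooses the offending pair so that the bounded disk $D_0$ contains no other arc segments, slightly perturbs to clear $*$ and $\Delta$, and pushes by an ambient isotopy). So your bigon paragraph, despite being mislabeled as proving the $\ge$ direction, in fact establishes that each pair is in minimal position at the minimum of $v.w$, which is what is needed. You should however be more careful than ``or at least not a net increase'': the paper's choice of innermost disk is what guarantees a strict decrease. Your geodesic-representative alternative is a genuinely different and clean route not taken by the paper; if you go that way, the point needing real care is, as you note, the common boundary endpoint $*$ --- you need a hyperbolic structure in which geodesics from a fixed boundary point to punctures behave well, and you must verify that disjointness of the arcs within one plant is preserved by geodesic straightening, which follows from the usual bigon criterion but should be stated.
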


\begin{proof}
For the first part, it suffices to show that generically, two arcs $a_{1}$, $b_{1}$ meet in finitely many points. By the transversality theorem (cf.~\cite{MR0061823}), the space of smooth embeddings $b_{1}\colon I \to S$ which are transversal to $a_{1}$ is dense in the space of all smooth embeddings. Now, $a_{1}\colon I \to S$ and $b_{1}\colon I \to S$ being transversal implies that $b_{1}^{-1}(a_{1}(I)) \subset I$ is a $0$-dimensional submanifold. Such a submanifold necessarily consists of only finitely many points.

The inequality '$\geq$' is clear by definition of the products $a_i.b_j$.
For the other inequality, let $v$, $w$ be in minimal position, i.e.,
$v.w = |v^{\circ} \cap w^{\circ}|$ and all intersections are transversal. Assume $v.w >  \sum_{i=1}^\zeta\sum_{j=1}^\xi a_i.b_j$.

By assumption, there exist indices $p$, $q$ with
\begin{equation}\label{gnull}
| a^{\circ}_p \cap  {b^{\circ}_q}| > a_p.b_q.
\end{equation}
Therefore, there must be segments of $a_{p}$ and $b_{q}$ whose union is a continuous loop. Choose $k$ and $l$ among all such $p$, $q$ such that such a loop has no intersection with further arcs of $v$ or $w$. Then there is a closed disk $D_0 \subset S$ bounded by segments of $a_k$ and $ b_l$, containing no other arc segments of $v$ or $w$.

\begin{figure}
\centering
\captionsetup[subfigure]{labelformat=empty}
\begin{subfigure}[b]{0.28\linewidth}
\centering
\def\svgwidth{\columnwidth}
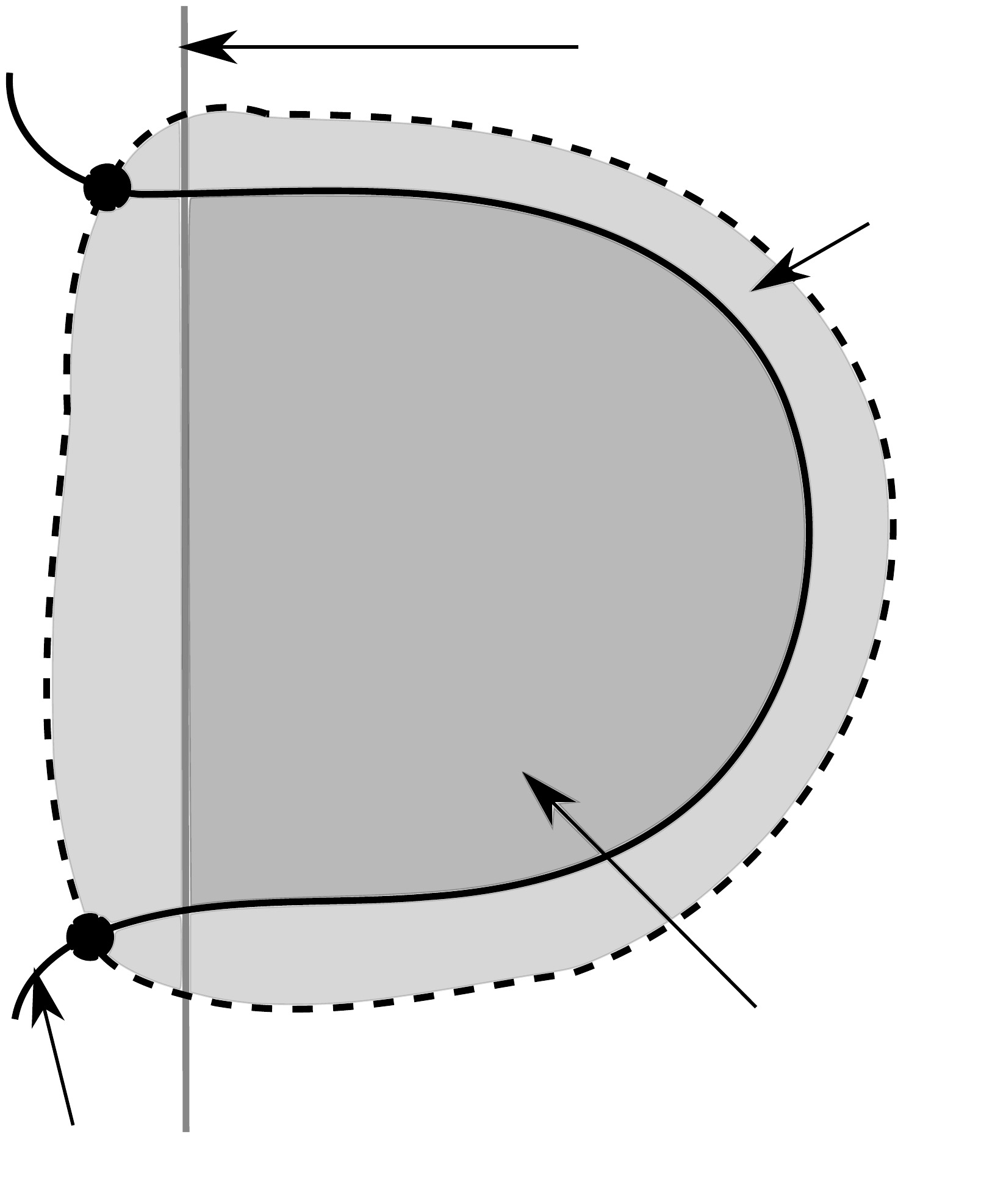
\end{subfigure}
\begin{minipage}[b][3cm][c]{0.05\linewidth}
$\rightsquigarrow$
\vspace{2cm}
\end{minipage}
\begin{subfigure}[b]{0.245\linewidth}
\centering
\def\svgwidth{\columnwidth}
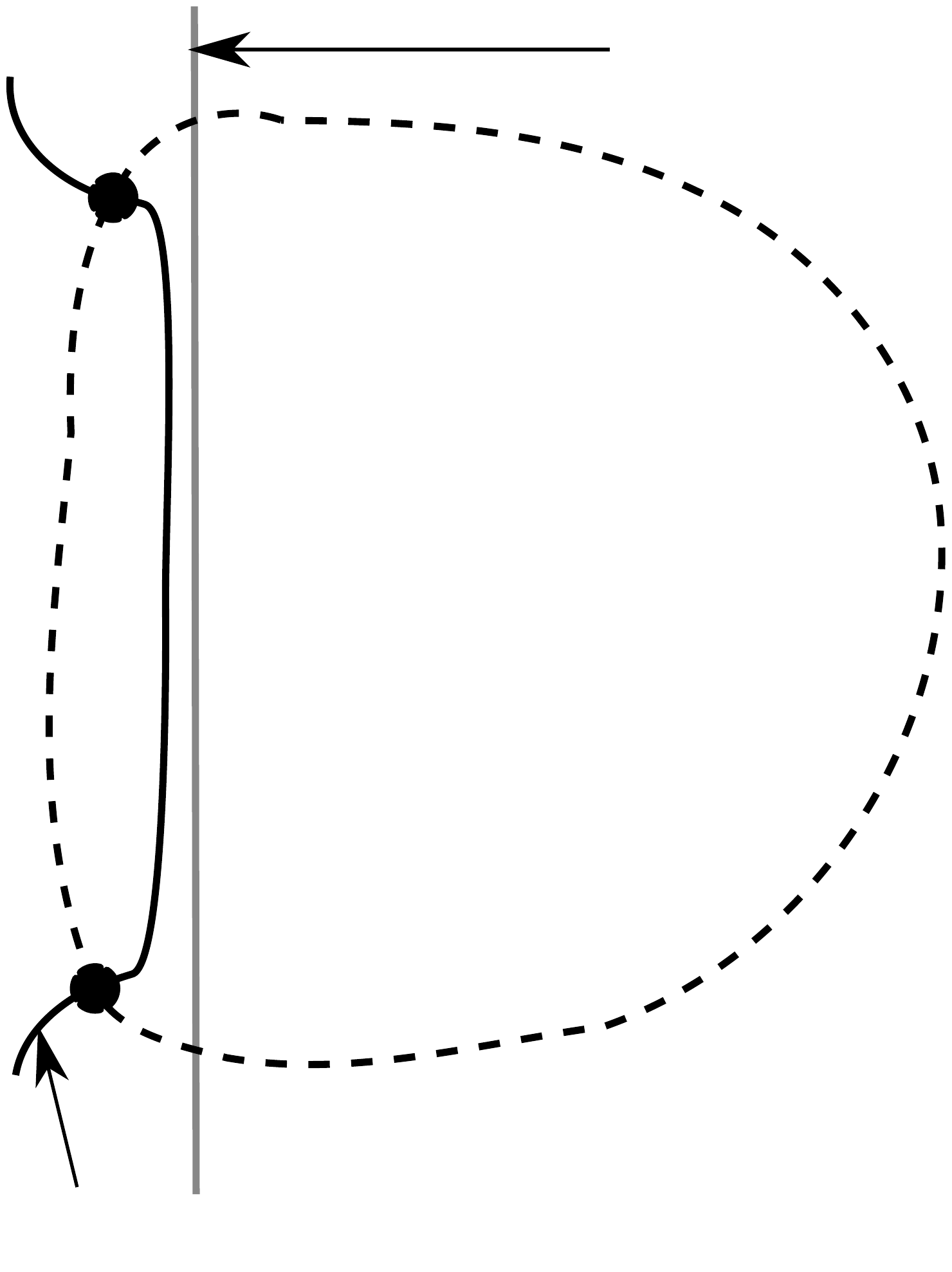
\end{subfigure}
\caption{The isotopy $H$ from the proof of Lemma~\ref{intersection_lemma}.}
\end{figure}

After eventual slight smooth deformations of $a_{k}$ or $ b_l$, we may assume that neither~$*$ nor~$\Delta$ share points with $D_0$. These deformed arcs $a'_{k}$, $b'_{l}$ can be chosen such that for the plants $v'$, $w'$ defined by replacing $a_{k}$ by $a'_{k}$ and $b_{k}$ by $b'_{k}$, respectively, we have
\begin{equation}\label{cond3}
|v'^{\circ} \cap w'^{\circ}| \leq |v^{\circ} \cap w^{\circ}| + 1.
\end{equation}
Indeed, if both $*$ and~$\Delta$ intersected $D_0$, there would be no interior point of intersection between $a_k$ and $ b_l$, contradicting~(\ref{gnull}). Denote by $D_{1}$ the resulting closed disk bounded by segments of $a'_{k}$ and $b'_{l}$.

We now define an isotopy $H\colon I\times S \to S$ that satisfies
\begin{align}
\label{cond1}
|H(\{1\} \times {a'_{k}}^{\circ}) \cap  {{b_{l}'}^{\circ}}| &<| {a_{k}'}^\circ \cap  {b_{l}'}^\circ| , \\
\label{cond2}
|H(\{1\} \times {v'}^{\circ}) \cap {w'}^{\circ}| &\leq  | {v'}^{\circ} \cap  {w'}^{\circ}| - 2.
\end{align}

Let $\epsilon>0$ and $D_1^\epsilon$ an open $\epsilon$-neighborhood of $D_1$, where we choose $\epsilon$ such that there are no segments of arcs in $D_1^\epsilon$ other than $a'_{k}$ and $b'_{l}$, and such that $D_1^\epsilon$ lies entirely in the interior of $S$.
Now, the arc segment $a'_{k} \cap \overline{D_1^\epsilon}$ is isotopic (fixing endpoints) to an arc segment in $\overline{D_{1}^{\epsilon}}$ which does not intersect $b'_{l} \cap \overline{D_1^\epsilon}$. By the isotopy extension theorem (cf.~\cite{MR0123338}), this isotopy may be extended to an ambient isotopy $h\colon I \times \overline{D_{0}^{\epsilon}} \to  \overline{D_1^\epsilon}$ which fixes the boundary circle $\partial\overline{D_1^\epsilon}$ pointwise.
We extend $h$ by the identity on $S \setminus D_1^\epsilon$ and denote the resulting isotopy of $S$ by  $H$.

Now, (\ref{cond1}) is satisfied since we push $D_1$ across $ b'_l$ and thus remove two intersections. As the potential slight deformation of $a'_k$ and $ b'_l$ creates at most one extra intersection, the application of $H$ removes at least one intersection point. Then, (\ref{cond2}) follows from the choice of $\epsilon$.

From (\ref{cond3}) and (\ref{cond2}), we obtain $|H(\{1\} \times {v'}^{\circ}) \cap {w'}^{\circ}| < |v^{\circ} \cap w^{\circ}| = v.w$, which contradicts the definition of the intersection number. The assertion follows.
\end{proof}

\begin{definition}
With notation as above, we define:
\begin{enumerate}[(i)]
 \item The \emph{full $\XI$-plant complex} $\FP(S)$ is the simplicial complex with isotopy classes of $\XI$-plants in $(S,\Delta)$ as vertices. A $q$-simplex in $\FP(S)$ is a set of $q+1$ isotopy classes of $\XI$-plants on $(S,\Delta)$ which can be embedded with disjoint interiors.
 \item The \emph{$\XI$-plant complex} $\mathcal{P}_{\DEL}^{\XI}(S)$ is the subcomplex of $\FP(S)$
 which contains the simplices $\alpha\in\FP(S)$ such that no two plants of $\alpha$ share a point in $\Delta$.
 \item The \emph{full colored $\XI$-plant complex} $\FO(S) \subset \FP(S)$ and the \emph{colored $\XI$-plant complex}  $\cO(S) \subset \cP(S)$ are the subcomplexes defined by the restriction that only colored plants are allowed as vertices.
 \end{enumerate}
\end{definition}

\begin{figure}
\centering
\captionsetup[subfigure]{labelformat=empty}
\begin{subfigure}[b]{0.49\linewidth}
\centering
\includegraphics[scale=0.2]{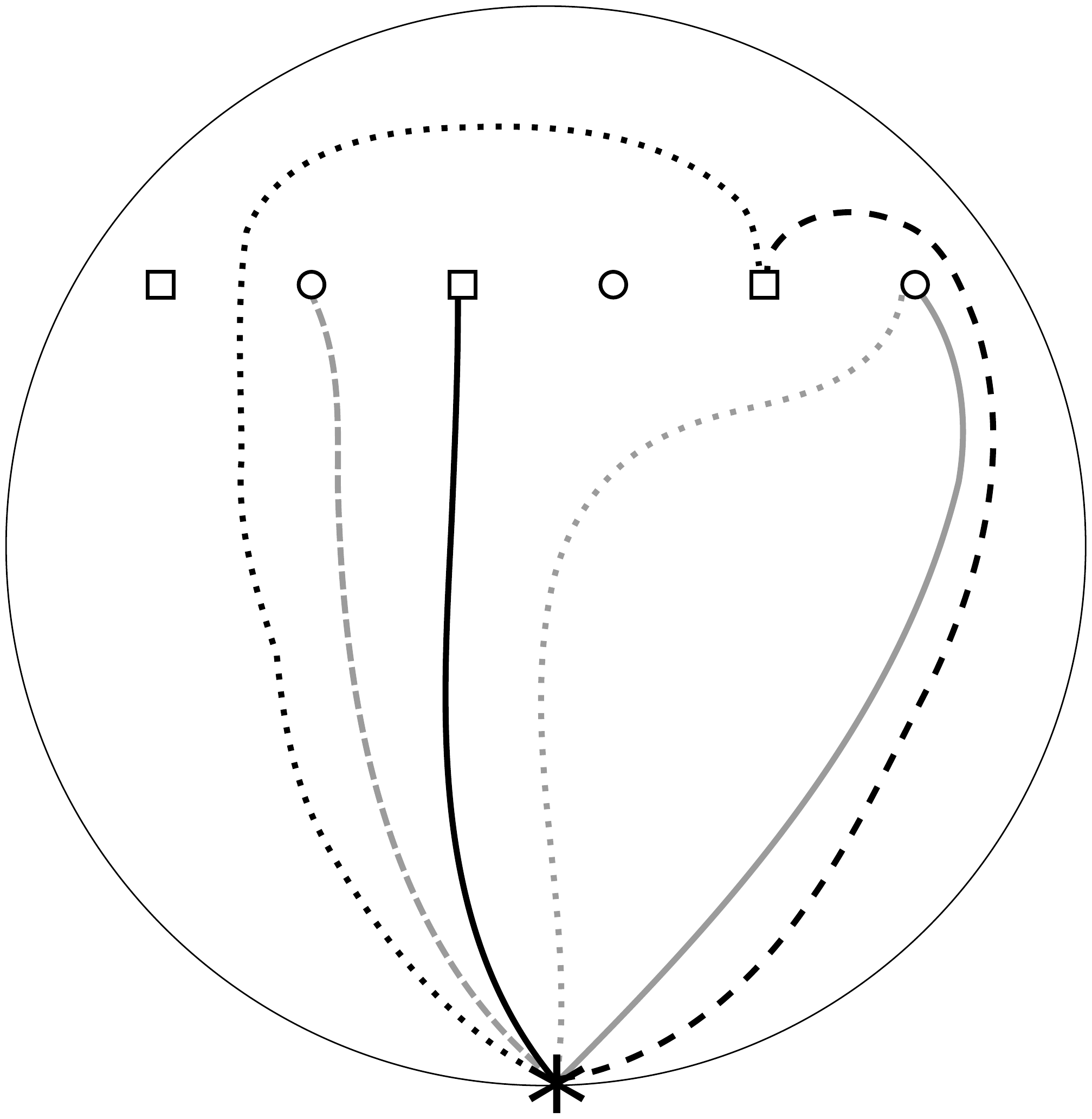}
\caption{$2$-simplex in $\mathrm{F}\mathcal{P}_{(3,3)}^{(1,1)} (D)$}
\end{subfigure}
\begin{subfigure}[b]{0.49\linewidth}
\centering
\includegraphics[scale=0.2]{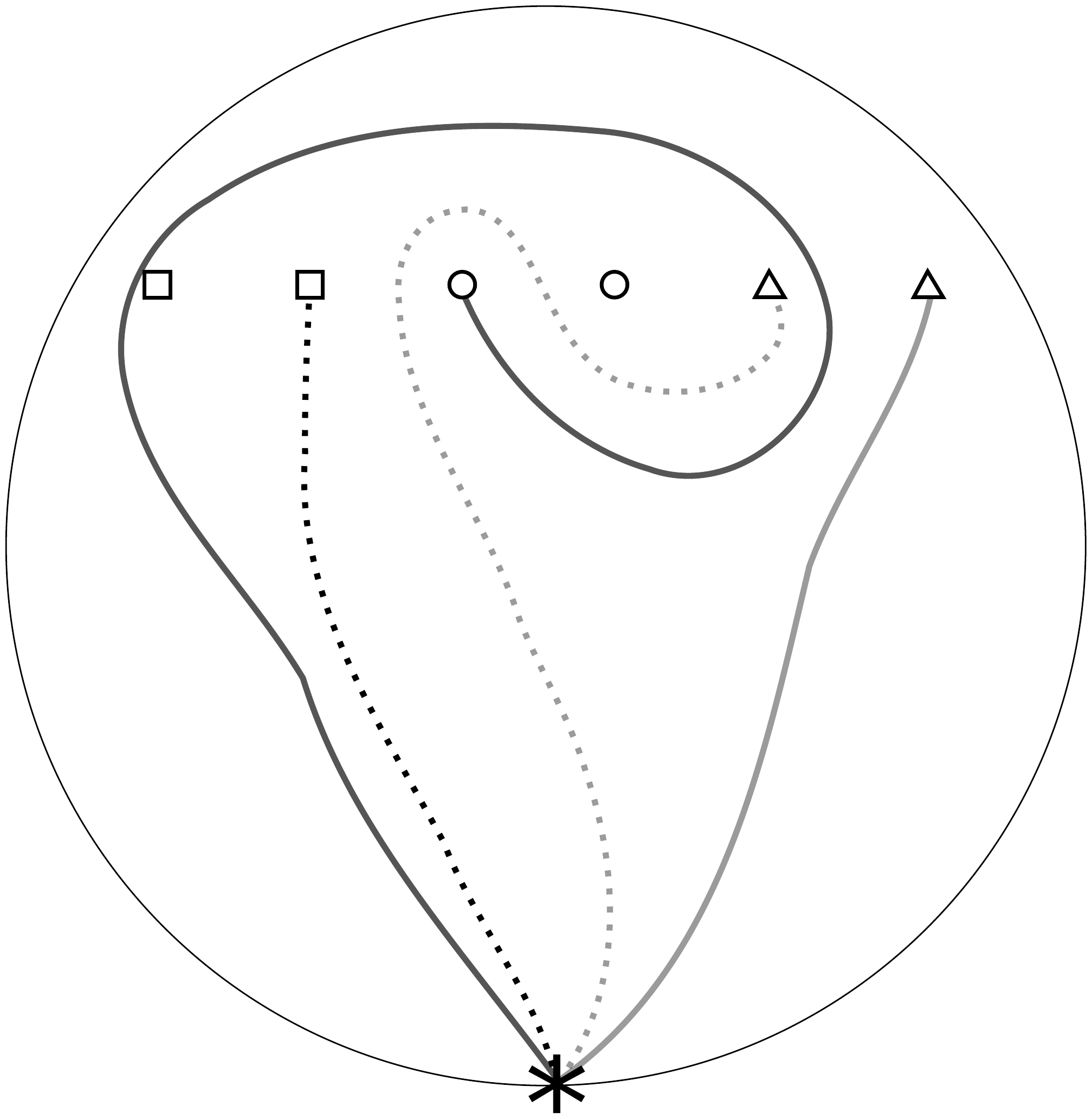}
\caption{$1$-simplex in $\mathcal{P}_{(2,2,2)}^{(1,1,0)} (D)$}
\end{subfigure}
\caption{Representatives of simplices in plant complexes (1) -- colors indicate the type of endpoints, line styles distinguish between different plants.}
\end{figure}

\begin{remark}
By definition, we have the following diagram:
$$
\xymatrix{
\cO(S) \ar@{}[d]|-*[@]{\subset} \ar@{}[r]|-*[@]{\subset} &\cP(S) \ar@{}[d]|-*[@]{\subset}\\
\FO(S) \ar@{}[r]|-*[@]{\subset} &\FP(S)}
$$
As there are only finitely many isotopy classes of arcs in $(S,\Delta)$, all plant complexes have a finite number of simplices.
\end{remark}

We use two partial orderings of $\N_{0}^t$:
\begin{itemize}
\item $(x_1, \ldots, x_t) \preccurlyeq (y_1, \ldots, y_t)$ if there is a permutation $\sigma\in \mathfrak{S}_{t}$ such that for all $i=1,\ldots, t$, we have $x_i \leq y_{\sigma{(i)}}$, and
\item $(x_1, \ldots, x_t) \leq (y_1, \ldots, y_t)$ if $x_i \leq y_i$ for all $i=1,\ldots, t$.
\end{itemize}
Immediately from the definitions, we obtain:

\begin{lemma}\label{nonempty}
Both $\FP(S)$ and $\cP(S)$ are non-empty if and only if $\XI\preccurlyeq\DEL$, and
$\FO(S)$ and $\cO(S)$ are non-empty if and only if $\XI \leq \DEL$.
\end{lemma}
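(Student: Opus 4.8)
The plan is to reduce the four assertions to a single observation about when a plant (resp.\ colored plant) can be embedded in $(S,\Delta)$, and then to note that the emptiness of a simplicial complex is equivalent to the nonexistence of a single vertex. Since $\FP(S)$, $\cP(S)$, $\FO(S)$, $\cO(S)$ are all nonempty as soon as they have at least one vertex (a single $\XI$-plant is always a $0$-simplex, its interior being disjoint from itself), it suffices to characterize the existence of a $\XI$-plant (resp.\ colored $\XI$-plant) in $(S,\Delta)$. The conditions for $\FP$ versus $\cP$ coincide, as do those for $\FO$ versus $\cO$, because the subcomplex conditions defining $\cP\subset\FP$ and $\cO\subset\FO$ (no two plants sharing a point of $\Delta$) are vacuous for a single plant: a single plant has distinct endpoints by definition, so a one-vertex simplex of $\FP$ already lies in $\cP$, and likewise for $\FO$ and $\cO$. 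Thus the lemma splits into exactly two claims: a $\XI$-plant exists in $(S,\Delta)$ iff $\XI\preccurlyeq\DEL$, and a colored $\XI$-plant exists iff $\XI\leq\DEL$.

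For the "only if" directions, suppose a $\XI$-plant exists. By Definition~\ref{def-plant}(i), there is a permutation $\sigma\in\mathfrak{S}_t$ with exactly $\xi_i$ of its $\xi$ arcs ending at points of $\Delta_{\sigma(i)}$; since distinct arcs of a plant have distinct endpoints (they meet only at $*$, and an arc's other endpoint lies in $\Delta$), we need $\xi_i\leq\delta_{\sigma(i)}$ for every $i$, which is precisely $\XI\preccurlyeq\DEL$. For a colored $\XI$-plant, Definition~\ref{def-plant}(ii) forces exactly $\xi_i$ arcs to end in $\Delta_i$, so $\xi_i\leq\delta_i$ for all $i$, i.e.\ $\XI\leq\DEL$. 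For the "if" directions, assume $\XI\preccurlyeq\DEL$, say via $\sigma$, and choose for each $i$ a subset of $\xi_i$ points in $\Delta_{\sigma(i)}$; since $S$ is connected with nonempty boundary and $*\in\partial S$, one can join $*$ to each of these $\xi=\sum\xi_i$ chosen points by pairwise-disjoint arcs (pick any embedded arcs, then remove intersections by a standard innermost-disk / surgery argument, e.g.\ the one used in the proof of Lemma~\ref{intersection_lemma}, or simply use that a neighborhood of $*$ together with disjoint arcs into a tree spanning the chosen points can be embedded). This produces a $\XI$-plant. If moreover $\XI\leq\DEL$, take $\sigma=\id$ in the construction, yielding a colored $\XI$-plant.

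The only nontrivial point — and the one I expect to require the most care — is the existence claim: given the combinatorial inequality, actually embedding $\xi$ disjoint arcs from the single basepoint $*$ to prescribed points of $\Delta$. This is elementary for a disk but should be phrased so it is clear it works for any connected $S$ with nonempty boundary; the clean way is to observe that $S$ deformation retracts onto a graph, connect $*$ to the target points within a regular neighborhood of a tree, and note that disjointness is automatic since a tree has no cycles. I would state this as a short remark rather than belabor it. Everything else is immediate from the definitions, so the proof will be brief.
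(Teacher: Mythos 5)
Your proof is correct and takes the same route as the paper, which offers no written argument at all (it states the lemma "immediately from the definitions"); you have simply spelled out the reasoning being left implicit, namely the reduction to the existence of a single vertex and the elementary fact that disjoint arcs from a boundary basepoint to prescribed interior points can be embedded in a connected surface with boundary.
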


\begin{remark}\label{plant-order} 
There is a natural total order on the vertices of simplices in $\cP(S)$ ($\cO(S)$) which induces the structure of an ordered simplicial complex: By imposing a Riemannian structure on $S$, we may assume that all arcs are parametrized by arc length. Now for plants $v$, $w$ of the same pattern, we write $v<w$ if and only if in a non-intersecting realization of $v$ and $w$, there is an arc in $v$ whose inward pointing unit tangent vector at $*$ occurs in clockwise order before any inward pointing unit tangent vector of an arc in $w$.
\end{remark}

\begin{figure}
\centering
\captionsetup[subfigure]{labelformat=empty}
\begin{subfigure}[b]{0.49\linewidth}
\centering
\includegraphics[scale=0.2]{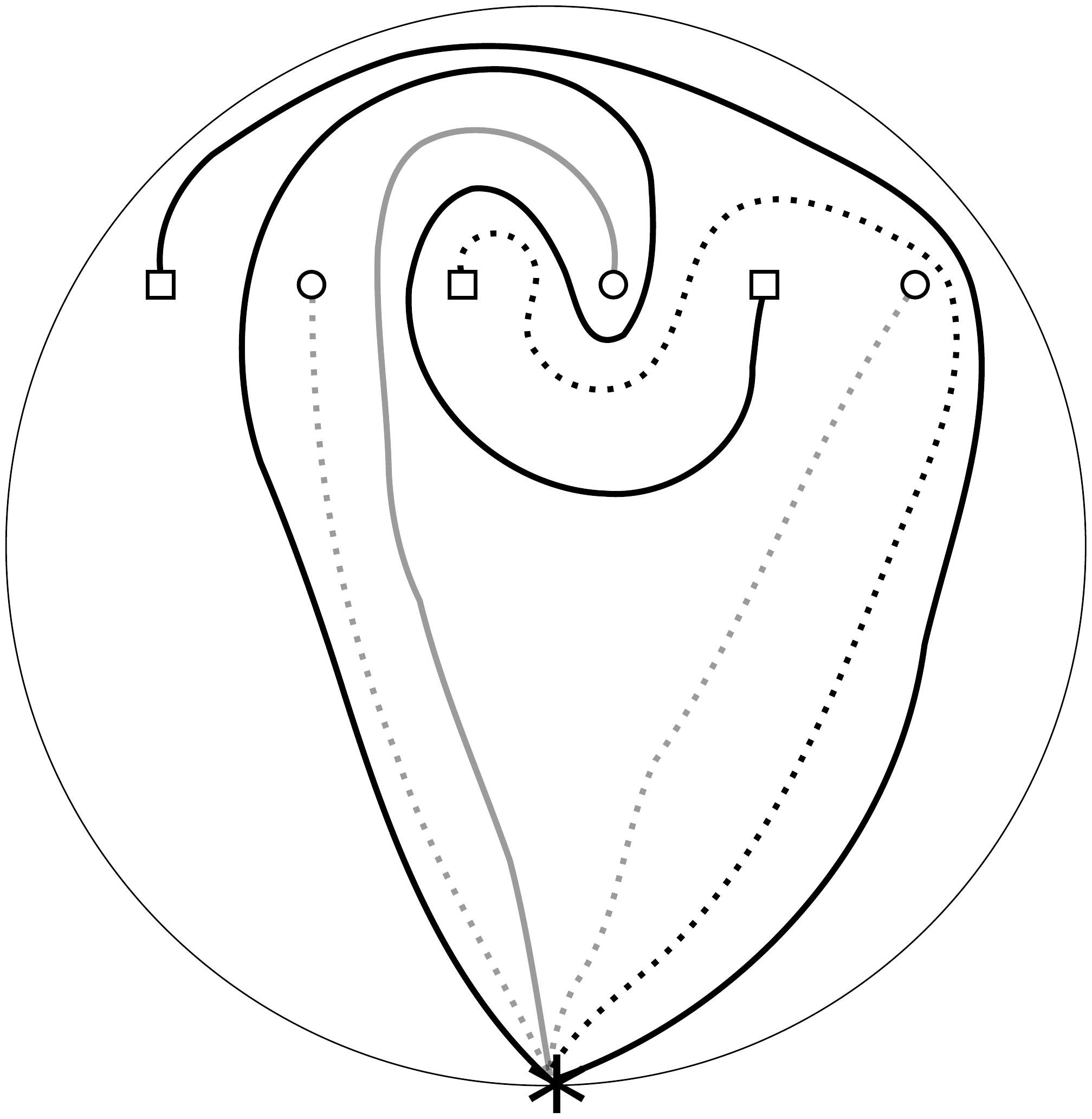}
\caption{$1$-simplex in $\mathcal{P}_{(3,3)}^{(2,1)} (D)$}
\end{subfigure}
\begin{subfigure}[b]{0.49\linewidth}
\centering
\includegraphics[scale=0.2]{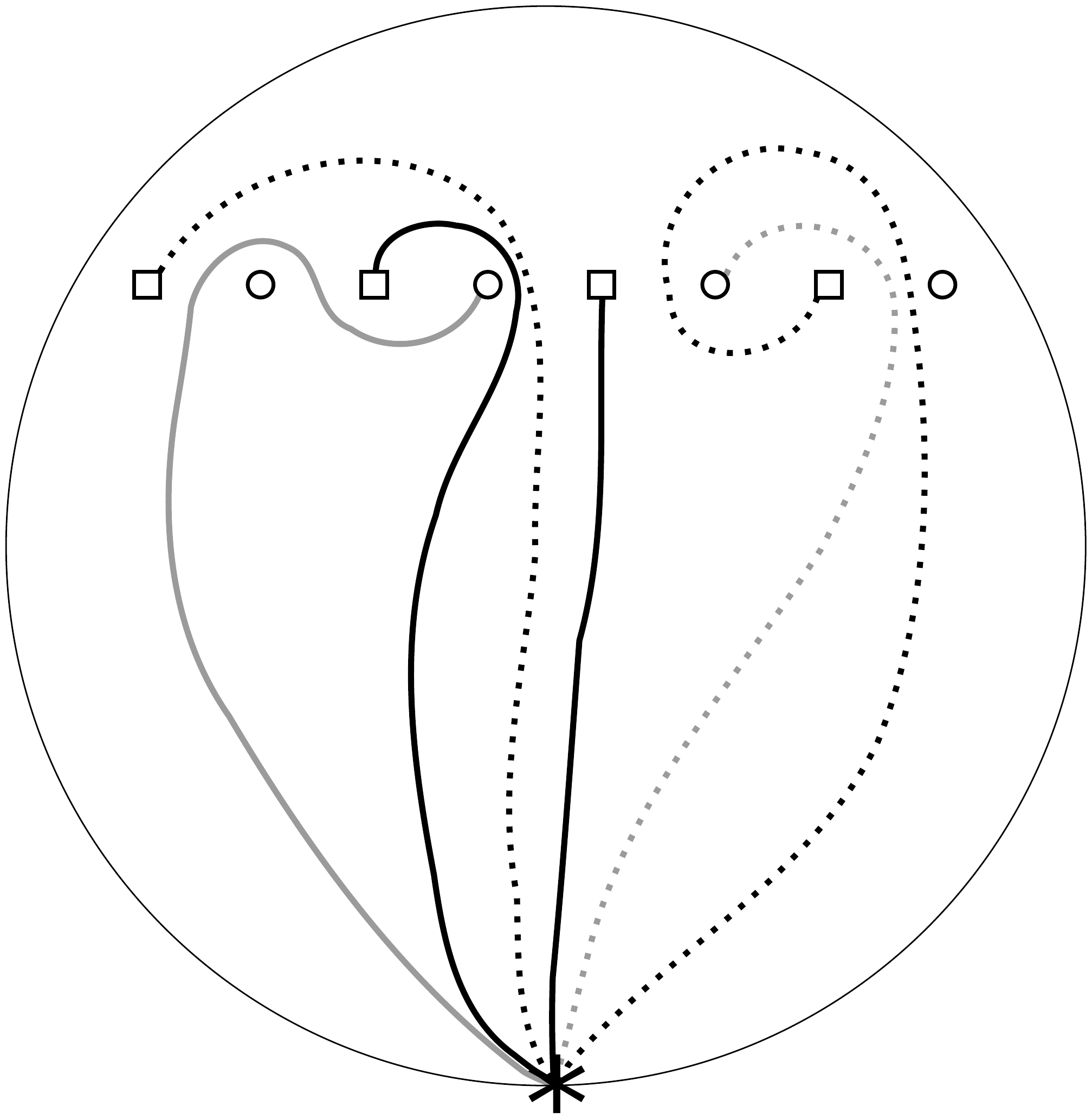}
\caption{$1$-simplex in $\mathcal{O}_{(4,4)}^{(2,1)} (D)$}
\end{subfigure}
\caption{Representatives of simplices in plant complexes (2).}
\end{figure}

\section{Connectivity analysis}

In this section, we always assume that the complexes are non-empty, so we have $\XI\preccurlyeq\DEL$ for plant complexes, and $\XI\leq\DEL$ for colored plant complexes.
Similar connectivity proofs can be found in~\cite[Sect.~4]{MR3135444} and~\cite[Sect.~2]{1410.0923}.

We defined the simplicial complexes abstractly. In order to talk about the \emph{connectivity} of a complex $\mathcal{O}$, we need to consider its geometric realization, which we denote by $|\mathcal{O}|$.

\begin{proposition}\label{contractible}
Both $|\FP(S)|$ and $|\FO(S)|$ are contractible.
\end{proposition}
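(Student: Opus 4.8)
The plan is to show contractibility by a standard "canonical deformation to a distinguished vertex" argument, as in the classical contractibility proofs for arc complexes. First I would fix a particularly simple plant $v_0$ in $(S,\Delta)$: since $\FP(S)$ is non-empty by Lemma~\ref{nonempty}, choose a $\XI$-plant whose arcs are pairwise non-intersecting (away from $*$) and are "short" in the sense that they can be made disjoint from any prescribed compact family of arcs after an isotopy. For the colored version one additionally requires the endpoints to be distributed according to the coloring. The key idea is that, given any finite simplex $\alpha = \{[w_0],\dots,[w_q]\}$ of $\FP(S)$ together with $[v_0]$, one can — after choosing representatives in minimal position — "comb" the plants $w_i$ along a collar of $*$ so that they become disjoint from a fixed representative of $v_0$; this is precisely the kind of surgery argument appearing in \cite{MR3135444} and \cite{1410.0923}, and it uses Lemma~\ref{intersection_lemma} to reduce intersection numbers arc-by-arc.

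Concretely, I would invoke the combinatorial criterion for contractibility of a simplicial complex via a "star cover" / link argument: it suffices to exhibit, for every finite subcomplex, a homotopy into the closed star of $[v_0]$, and then observe that the closed star is contractible. Equivalently, one shows that the inclusion of the star $\Star([v_0])$ into $\FP(S)$ is a weak equivalence by proving that for every simplex $\sigma$ not containing $[v_0]$, the subcomplex of "ways to move $\sigma$ toward $[v_0]$" is non-empty and in fact contractible (a standard application of, e.g., the nerve lemma or a discrete Morse/poset argument; cf.\ the technique in \cite[Sect.~4]{MR3135444}). The heart of the matter is the geometric claim: for any $\XI$-plant $w$, there is a representative disjoint from the fixed representative of $v_0$ — this is where one pushes $w$ off the collar neighborhood of $*$ used by $v_0$, arc by arc, invoking the isotopy extension theorem exactly as in the proof of Lemma~\ref{intersection_lemma}. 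Because the full complexes $\FP(S)$ and $\FO(S)$ have no restriction on shared endpoints in $\Delta$, there is no obstruction to placing $v_0$'s endpoints wherever convenient, which is what makes the full complexes (as opposed to $\cP(S)$, $\cO(S)$) contractible rather than merely highly connected.

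I expect the main obstacle to be making the "combing" step uniform over an entire simplex at once: one must simultaneously isotope all of $w_0,\dots,w_q$ off the collar of $v_0$ while keeping them pairwise disjoint, so that the deformed simplex together with $[v_0]$ spans a genuine simplex of $\FP(S)$. This is handled by working in a single collar neighborhood $N$ of $*$, arranging that $v_0 \cap N$ consists of $\xi$ standard radial segments occupying a prescribed angular sector, and then using a fiberwise ambient isotopy supported in $N$ that sweeps all the incoming arc-ends of the $w_i$'s into the complementary sector; disjointness of the $w_i$ outside $N$ is untouched, and inside $N$ the sweep can be taken to preserve the (cyclic) order of arc-ends, hence pairwise disjointness. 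Once this geometric lemma is in place, contractibility follows formally. I would present the argument in the order: (1) fix $v_0$ and a collar $N$; (2) prove the simplex-combing lemma via isotopy extension; (3) conclude that $\FP(S)$ deformation retracts onto the (contractible, since it is a cone) star of $[v_0]$, and likewise for $\FO(S)$.
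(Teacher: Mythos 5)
Your high‑level goal — deformation retract $|\FP(S)|$ onto the (cone, hence contractible) star of a fixed plant — is the same as the paper's, but the mechanism you propose cannot work. The central geometric claim you rely on, that ``for any $\XI$-plant $w$, there is a representative disjoint from the fixed representative of $v_0$,'' is false: the minimal intersection number $w.v_0$ of Definition~\ref{def-plant}(v) is an isotopy invariant, and it is strictly positive for plants $w$ that wind around points of $\Delta$ in a way forcing crossings with $v_0$. No choice of ``short'' $v_0$, no collar $N$ of $*$, and no ambient isotopy can change this; combing near $*$ does not touch intersections that live away from the collar. If your claim were true, one would have $\FP(S) = \Star(v_0)$ outright, which is not the case. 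Invoking Lemma~\ref{intersection_lemma} here is also a mismatch: that lemma establishes that the intersection number is finite and additive over arcs, not that it can be reduced to zero.

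The paper's proof gets around exactly this obstruction with a Hatcher‑type flow. One does \emph{not} isotope $w$ off $v$; instead, at the first intersection point one \emph{surgers} the offending arc of $w$ — cut it at the intersection, route the loose ends back to $*$ through a collar, and keep the resulting arc — producing a genuinely \emph{different} plant $C(\alpha)$ with fewer intersections with $v$ and disjoint from all the plants of the original simplex. This yields a chain of simplices $r_1(\alpha),\dots,r_k(\alpha)$ (of dimension at most $p+1$), each sharing a facet with the next, and the deformation retraction is a piecewise‑linear path in $|\FP(S)|$ through these simplices with speeds weighted by intersection numbers. Lemma~\ref{intersection_lemma} is what makes these weights additive over arcs and hence makes the flow continuous across faces. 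So the correct picture is a combinatorial flow that \emph{changes vertices} at each step, not an ambient isotopy; your proposal conflates moving within an isotopy class with moving between isotopy classes, and that is precisely the step that fails.
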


\begin{proof}
In the proof, we construct a flow similar to the \emph{Hatcher flow} introduced in~\cite{MR1123262}. The proof is carried out for $|\FP(S)|$. It is fully analogous for $|\FO(S)|$. 

In the following, we switch freely between plants as subsets of $S$, plants as vertices of plant complexes, and their respective isotopy classes if no misunderstandings are possible.

 We fix a plant $v$ in $\FP(S)$ with arcs $a_1, \ldots, a_\xi$ in a fixed order.
 Our goal is to show that $|\FP(S)|$ deformation retracts onto $|\Star(v)|$, which is contractible.
 
 We order the interior points of $v$ in the following way:
 \begin{itemize}
  \item $x \prec y$ if $x\in a_i$, $y\in a_j$ for $i<j$
  \item If $x,y \in a_i$, $x\prec y$ if $x$ is closer to $*$ along $a_i$ than $y$.
 \end{itemize}
 
 Let $\alpha = \langle w_0, \ldots, w_p \rangle$ be a $p$-simplex of $\FP(S)$ with representative plants~$w_{i}$ chosen such that the number of intersections with $v$ is minimal.
 The ordering of the interior points of $v$ induces an order on the set $(w_{0}\cup\ldots\cup w_{p})\cap v^\circ$ of intersection points, which we denote by $g_1, \ldots, g_k$.
 At $g_i$, the plant $w_{j_i}$ intersects the arc $a_{l_i}$.
 
 In an $\epsilon$-neighborhood of $g_{1}$, we erase the segments of the arc of $w_{j_{1}}$ which contains~$g_{1}$ and join the two loose ends to $*$ by straight lines. We denote by $C(\alpha)$ the plant that is obtained from $w_{j_1}$ by replacing the arc containing $g_1$ with a smooth approximation of the one of the two newly created paths that is an arc.
 Because of the order we put on the intersection points, $C(\alpha)$ is a plant which is disjoint from the plants in $\alpha = \alpha^{(1)}$.

In the following, we misuse notation by allowing vertices to occur more than once in a simplex. In this sense, by $\langle c_{0}, \ldots, c_{p}\rangle$ we denote the simplex with vertices $\{c_{0}, \ldots, c_{p}\}$, which might be of dimension smaller than $p$.

We now define a finite sequence of simplices inductively. We start with $i=1$, the first intersection point $g_{i} = g_{1}$, and the simplex $$r_1(\alpha) = \langle w_0, \ldots, w_p, C(\alpha)\rangle = \langle \alpha^{(1)},  C(\alpha^{(1)})\rangle,$$ and execute the procedure below. In every step, we choose representative plants for the vertices of the simplices such that the number of intersections with $v$ is minimal.

\begin{enumerate}[(1)]
\item Increase $i$ by one. Stop if $i=k+1$, otherwise go to the next step.
  \item If the intersection at $g_{i}$ is not yet resolved in $\alpha^{(i)}$, replace the plant of $\alpha^{(i)}$ that contains~$g_i$ with $C(\alpha^{(i)})$, denote the resulting $p$-simplex by $\alpha^{(i+1)}$, and set
  $r_{i}(\alpha) = \langle \alpha^{(i)} , C(\alpha^{(i)})\rangle.$
  Else, set $\alpha^{(i+1)} = \alpha^{(i)}$ and $r_{i}(\alpha) = \langle \alpha^{(i)} , w_{j_{i}}'\rangle$, where $w_{j_{i}}'$ is the $j_{i}$-th plant of $\alpha^{(i)}$.
 \item 
 Go to step 1.
\end{enumerate}

By the above remarks about disjointness, we produce simplices $r_{i}(\alpha)$ of dimension at most $p+1$ at each step. The $p$-simplex $\alpha^{(k+1)}$ is in the star of $v$, since all of its plants are disjoint from $v$.
By construction, $\alpha^{(k+1)}$ is a face of $r_{k}(\alpha)$.

Now, we may use the sequence $r_1(\alpha), \ldots, r_{k}(\alpha)$ to define a deformation retraction of $|\FP(S)|$ onto $|\Star(v)|$. Using barycentric 
coordinates\footnote{Here, we use the same order on the vertices of $\FP(S)$ as in the definition of the $r_{i}(\alpha)$. If a vertex~$c_{j}$ appears more than once in $r_{i}(\alpha)$, adding up the corresponding entries of a given tuple $T$ yields the barycentric coordinate of the point we refer to.},
any point on the realization of the $p$-simplex $\alpha = \langle w_{0}, \ldots, w_{p} \rangle$ can be identified with a tuple $T = (t_0, \ldots, t_p)$, where $t_j\geq 0$ and $\sum_{i=0}^{p} t_i = 1$.
For $i=0,\ldots, p$, let $k_i = |w_i^\circ \cap v^\circ| = w_i.v$, where the second equality is due to the choice of the~$w_{i}$.
Given a tuple $T$ and $i\in\{1, \ldots, k\}$, we assign to~$g_i$ the weight $\omega_i(T) = t_{j_i}/k_{j_i}$ if $k_{j_{i}}>0$, and $\omega_{j}(T) = 0$ else, such that $\sum_{j=1}^{k} \omega_j(T) = 1$.

For fixed $\alpha$ and $T$, we define $f\colon I \to |\FP(S)|$ by
$$
f_{\alpha}^{T}(s) = [r_i(\alpha), (x_0, \ldots, x_{p+1})]
$$
for $\sum_{j=1}^{i-1} \omega_j(T)\leq s \leq \sum_{j=1}^{i} \omega_j(T)$, where $i \in\{ 1, \ldots, k\}$.
Here, we set $x_l = t_l$ for all $l$, except for the pair
$$
(x_{j_i}, x_{p+1}) = (t_{j_i} - k_{j_i} (s-\sum_{j=1}^{i-1}\omega_j), k_{j_i}(s-\sum_{j=1}^{i-1}\omega_j )).
$$

The map $f^{T}_{\alpha}$ is well-defined:
\begin{align*}
f^{T}_{\alpha}\left(\sum_{j=1}^{i} \omega_j\right) &= [r_{i+1}(\alpha), (t_0, \ldots,t_p, 0)] = [r_{i}(\alpha), (t_0, \ldots, t_{j_{i} -1}, 0, t_{j_{i} + 1}, \ldots, t_p, t_{j_{i}})].
\end{align*}

By construction, $f_{\alpha}^{T}(1)$ lies in $\alpha^{(k+1)}\in \Star(v)$. 
We may now patch the maps $f^{T}_{\alpha}$ for all simplices $\alpha$ and coordinates $T$ with only non-zero entries in order to obtain a global homotopy $f\colon I \times |\FP(S)| \to |\FP(S)|$ with image in $\Star(v)$. 

We still need to prove that $f$ is continuous. By \cite[Thm.~3.1.15]{MR0210112}, we only have to show continuity for the restriction of $f$ to the geometric realization of any simplex.

In the interior of the realization of $\alpha = \langle w_{0}, \ldots, w_{p}\rangle$, continuity follows from the definition of the $\omega_i(T)$. It remains to show that we may go to a subsimplex of $\alpha$ continuously.
That is, for $\beta=\langle w_0, \ldots, w_{p-1}\rangle$,
we must show that for all $s\in I$,
$
f_{\alpha}^{(t_0, \ldots, t_{p-1},0)}(s) = f_{\beta}^{(t_0, \ldots, t_{p-1})}(s)
$. 

This follows from Lemma~\ref{intersection_lemma}:
The number of intersections of $w_p$ and~$v$ does not depend on the simplex $\alpha$; in other words, we have $v.\beta = v.\alpha - v.w_p$. Thus, going to $\beta$ corresponds exactly to $t_{p}$ and any corresponding weight going to zero. We can therefore pass from $\alpha$ to any facet $\beta$ of $\alpha$ continuously. For an arbitrary subsimplex, the claim follows inductively.
\end{proof}

\begin{theorem}\label{delta-conn}
For the connectivity of plant complexes, we have:
\begin{enumerate}[(i)]
\item If $\min\XI>0$, $\conn |\cO(S)| \geq  \min_{i=1, \ldots, t} \left\lfloor\frac{\delta_{i}}{2 \xi_{i}}  \right\rfloor - 2$. \label{colored-conn}
\item If $\min\XI>0$, $\conn |\cP(S)|\geq \left\lfloor \frac{\min\DEL}{2 \max \XI} \right\rfloor -2$. \label{plant-conn}
\item
Let $r>0$, $t\geq m> 0$. If $\DEL = (r, \ldots, r)\in\N^{t}$ and $\XI = (\overbrace{r, \ldots, r}^{m \text{ times}}, 0, \ldots, 0)\in\N_{0}^{t}$, $ \conn|\cP(S)| \geq \left\lfloor \frac{t}{2m-1}\right\rfloor -2$.\label{multi-conn}
\end{enumerate}
\end{theorem}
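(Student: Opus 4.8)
The plan is to prove all three connectivity bounds of Theorem~\ref{delta-conn} by a single inductive argument on $\delta$ (equivalently on the number of marked points), using the standard ``bad simplices'' technique of Hatcher--Wahl together with a link-recognition step. The starting point is Proposition~\ref{contractible}: the full plant complexes $|\FP(S)|$ and $|\FO(S)|$ are already contractible, so the task is to compare these with the non-full subcomplexes $|\cP(S)|$, $|\cO(S)|$, where two plants of a simplex are forbidden from sharing a point of $\Delta$. I would introduce, for a $k$-simplex $\sigma$ of $\FP(S)$ (resp.\ $\FO(S)$), the notion that $\sigma$ is \emph{bad} if no two of its plants share a $\Delta$-point but $\sigma$ fails to be a simplex of $\cP(S)$ only because of interaction with other simplices — more precisely, following the pattern of \cite[Sect.~4]{MR3135444} and \cite[Sect.~2]{1410.0923}, one calls $\sigma$ bad if it is \emph{not} in $\cP(S)$ but every proper face of $\sigma$ lies in $\cP(S)$; such a $\sigma$ is minimal with respect to having two plants that collide at a point of $\Delta$. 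The link of a bad simplex then splits as a join, and one uses the colored analogue of the ``link is a join of smaller plant complexes'' principle.

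The key steps, in order, are as follows. First, set up the coloring/pattern bookkeeping: removing an arc (or a whole plant) from $(S,\Delta)$ decreases some $\delta_i$ by one, and the residual complex is again a plant complex of the same type with parameters $\DEL'$, $\XI$ (or $\DEL'$, $\XI'$) strictly smaller in the relevant sense, so the inductive hypothesis applies and the floor expressions drop by the expected amount. For part~\eqref{colored-conn} the bound $\min_i \lfloor \delta_i/(2\xi_i)\rfloor - 2$ is exactly what one gets by cutting along one plant: this removes $\xi_i$ points from each $\Delta_i$, so each ratio $\delta_i/\xi_i$ decreases by $1$, and the $2$ in the denominator and the ``$-2$'' come from the Hatcher--Wahl double-counting (each step of the retraction can raise the dimension of a bad-simplex link by $1$, costing a factor of $2$ in the connectivity range). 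Second, prove the link-is-a-join statement: the link of a plant $v$ in $\cP(S)$ is (up to the usual wedge/suspension corrections) the plant complex on $S$ cut along $v$, and the link of a bad $k$-simplex is a join of a ``color-clash resolution'' complex (which is highly connected, typically a sphere or contractible, because locally near the colliding $\Delta$-point we just have a configuration of arc-ends) with a smaller plant complex. Third, run the bad-simplex argument of \cite[Thm.~4.9]{MR3135444}: since $|\FP(S)|$ is contractible, $|\cP(S)|$ is $c$-connected provided the link of every bad simplex of dimension $k$ is $(c-k-1)$-connected, and the inductive estimates from the first two steps supply exactly this. Parts~\eqref{plant-conn} and \eqref{multi-conn} are then the specializations: \eqref{plant-conn} replaces each $\xi_i$ by the uniform bound $\max\XI$ and each $\delta_i$ by $\min\DEL$; \eqref{multi-conn} is the degenerate case where some $\xi_i=0$, so cutting along a plant removes arcs only from $m$ of the $t$ colors while the $t-m$ colors with $\xi_i=0$ are untouched — this is why the relevant ratio is $t/(2m-1)$ rather than $t/(2m)$, the ``$-1$'' reflecting that one does not pay the full join-cost because the uncut colors contribute for free, a phenomenon analogous to the fern-complex estimate in~\cite{1410.0923}.

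The main obstacle I expect is the non-transitivity / coloring subtlety in identifying the link of a bad simplex with a join of genuinely highly connected pieces. In the uncolored (arc-complex or fern-complex) setting the link is literally a smaller complex of the same kind; here, after forcing two plants to collide at a point of $\Delta_i$, the ``resolution'' of that collision and the bookkeeping of which color-classes survive has to be done carefully so that the residual complex still satisfies $\XI' \preccurlyeq \DEL'$ (resp.\ $\leq$), i.e.\ is non-empty by Lemma~\ref{nonempty}, and so that the floor bounds genuinely decrease by the stated amount rather than collapsing. A secondary technical point is handling the wedge-summand corrections that arise because cutting $S$ along a plant based at $*\in\partial S$ changes the boundary: one must check these corrections only improve connectivity, which is standard but needs the surface to have non-empty boundary — already part of our standing hypothesis. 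Once the link estimates are in place, the spectral-sequence / bad-simplex conclusion is formal.
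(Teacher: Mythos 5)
Your proposal correctly identifies the high-level strategy: start from the contractibility of the full complexes (Proposition~\ref{contractible}), set up a bad-simplex removal argument in the style of Hatcher--Wahl, cut along plants, and induct on the number of marked points (the paper inducts on $\min\DEL$, essentially equivalent). However, there are three genuine gaps.

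First, your definition of ``bad'' does not work. You propose to call a simplex of $\FP(S)$ bad if it is a minimal non-face of $\cP(S)$; because every proper face must then be collision-free, a minimal non-face can have at most one colliding pair, which forces it to be an edge. This is far too restrictive. The paper's notion of badness is defined on simplices $\alpha$ of the triangulated \emph{domain} disk $\mathcal{D}^{k+1}$, and it is the condition that \emph{every} plant in $\hat{\mathcal{F}}(\alpha)$ shares a $\Delta$-endpoint with some other plant in $\hat{\mathcal{F}}(\alpha)$. This can happen in any dimension, and maximality of such an $\alpha$ is precisely what forces $\hat{\mathcal{F}}|_{\Link(\alpha)}$ to land in the smaller complex $\mathcal{O}^{\XI}_{\DEL'}(S_{\hat{\mathcal{F}}(\alpha)})$: a bad simplex in $\Link(\alpha)$ would join with $\alpha$ to give a larger bad simplex. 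Your minimal-non-face notion does not have this property and would not give the right control over links.

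Second, the proposed decomposition of the link of a bad simplex as a join of a ``color-clash resolution complex'' with a smaller plant complex is not needed and does not reflect how the proof actually proceeds. The paper never decomposes a link as such a join; it instead observes directly that the restriction of $\hat{\mathcal{F}}$ to the link lands in a smaller plant complex whose connectivity is controlled by induction, and then fills the link in via the standard replace-the-star mechanism. There is no ``resolution complex.''

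Third, your explanation for the bound $\lfloor t/(2m-1)\rfloor - 2$ in part~\eqref{multi-conn} is wrong. The $2m-1$ does not come from ``uncut colors contributing for free.'' It comes from the following sharpened count: an arbitrary $p$-simplex in the multifern complex uses at most $(p+1)m$ of the color classes $\Delta_i$, but a \emph{bad} $p$-simplex uses at most $(p+1)(m-1)+\lfloor (p+1)/2\rfloor$ of them, because each plant must share at least one color with another plant and the resulting sharing graph has at least $\lceil (p+1)/2\rceil$ edges. Feeding this saving into the inductive estimate is what produces the denominator $2m-1$ rather than $2m$. Without this specific count, the induction does not close in part~\eqref{multi-conn}, and indeed the whole theorem would not yield the claimed $\lfloor n/2\rfloor - 2$ connectivity needed downstream in Section~\ref{homstabhurwitz}.
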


\begin{remark}
For $m=1$, the complex from Theorem~\ref{delta-conn}(\ref{multi-conn}) is the \emph{fern complex} from~\cite{1410.0923}. By the same article, the fern complex is at least $(t-2)$-connected. Our connectivity result generalizes this bound to the case $m>1$ which we call the \emph{multifern} case.
\end{remark}

If $\gamma$ is a collection of arcs in $(S,\Delta)$ which only meet at $*$, we write $S_\gamma$ for the connected space $(S\setminus\gamma)\cup\{*\}$. We can define (colored) plant complexes on $S_{\gamma}$ accordingly.
In particular, the arguments from Proposition~\ref{contractible} carry over to $S=S_\gamma$, so spaces of the form $|\FP(S_\gamma)|$ ($|\FO(S_{\gamma})|$) are contractible.

\begin{proof}[Proof of Theorem~\ref{delta-conn}]
We prove the proposition for a surface with boundary $S$ or a space of the form $S_\gamma$. The proof is performed in detail for part~(\ref{colored-conn}), which is the result needed in subsequent sections. Some remarks on the proofs on the other two parts are included below.

\textbf{Claim (\ref{colored-conn})} is proved by induction on $\min\DEL$, with $\XI$ fixed. The claim is void if there is an $i\in\{1, \ldots, t\}$ such that $\delta_{i} < 2\xi_{i}$, and we assume for the proof that $\delta_{i} \geq \xi_{i}$ for all $i$.

Let now $k\leq \min_{i=1, \ldots, t} \left\lfloor\frac{\delta_{i}}{2 \xi_{i}}  \right\rfloor - 2$, and consider a map
$
f\colon S^k \to |\cO(S)|.
$
We have to show that $f$ factors through a $(k+1)$-disk.
By the contractibility of $|\FO(S)|$, we have a commutative diagram:

$$
\xymatrix{
S^k \ar@{^{(}->}[d] \ar[r]^f &|\cO(S)| \ar@{^{(}->}[d]\\
D^{k+1} \ar[r]^{\hat f} &|\FO(S)|}
$$

By simplicial approximation, we may assume that all maps are simplicial. That is, they are the geometric realization of simplicial maps $\mathcal{F} \colon \mathcal{S}^{k}\to \cO(S)$ and $\hat{\mathcal{F}} \colon \mathcal{D}^{k+1}\to \FO(S)$ for some finite PL triangulations $\mathcal{S}^{k}$ and $\mathcal{D}^{k+1}$ of the $k$-sphere and the $(k+1)$-disk, respectively. It suffices to show that $\mathcal{F}$ factors through $\mathcal{D}^{k+1}$.

Our goal is to deform $\hat{ \mathcal{F}}$ such that its image lies entirely in $\cP(S)$.
We call a simplex~$\alpha$ of~$\mathcal{D}^{k+1}$ \emph{bad} if in each plant in $\hat{\mathcal{F}}(\alpha)$, there is at least one arc that shares an endpoint with an arc from another plant in $\hat{\mathcal{F}} (\alpha)$ (note that vertices are good).
In particular, a simplex of $\mathcal{D}^{k+1}$ with image in $\cP(S)$ cannot contain any bad subsimplices.

Let $\alpha$ be a bad simplex of $\mathcal{D}^{k+1}$ of maximal dimension $p \leq k+1$ among all bad simplices. Now, $\hat{\mathcal{F}}$ restricts to a map
$$
\hat{\mathcal{F}}|_{\Link(\alpha)}\colon \Link(\alpha) \to J_\alpha \coloneqq  \mathcal{O}_{\DEL'}^{\XI}(S_{\hat{\mathcal{F}}(\alpha)}),
$$
where $\DEL'$ is obtained from $\DEL$ by removing the endpoints of the arcs in $\alpha$ from an instance of $\DEL$. We still need to argue why the image of $\Link(\alpha)$ lies in
$\mathcal{O}_{\DEL'}^{\XI}(S_{\hat{\mathcal{F}}(\alpha)})$: If it did not, there would be a bad simplex $\beta\in\Link(\alpha)$, hence $\alpha*\beta$ would be bad, contradicting the maximality of the dimension of $\alpha$
(note that $\alpha$ and $\beta$ are joinable as $\beta$ is in $\Link(\alpha)$).

For all $i = 1, \ldots, t$, any $p$-simplex uses at most $(p+1)\cdot \xi_{i}$ endpoints of $\Delta_i$, so
\begin{equation}\label{ineq}
\delta_{i}'\geq \delta_{i} - (p+1)\cdot\xi_{i}.
\end{equation}
Furthermore, we have $p\leq k+1 \leq \min_{j=1, \ldots, t} \lfloor\frac{\delta_{j}}{2 \xi_{j}}  \rfloor - 1$, so we obtain from~(\ref{ineq}) and the assumption $\delta_{i} \geq \xi_{i}$:
\begin{align*}
\delta'_{i} &\geq \delta_{i} - (p+1)\cdot \xi_{i} \\
&\geq \delta_{i} - \xi_{i} \cdot \min_{j=1, \ldots, t} \left\lfloor\frac{\delta_{j}}{2 \xi_{j}}  \right \rfloor\\
& \geq \delta_{i} - \xi_{i} \left\lfloor \frac{\delta_{i}}{2\xi_{i}} \right\rfloor \\
& \geq \xi_{i}
\end{align*}
Here, the last inequality follows from the fact that for $a\geq b > 0$, the inequality
$
a - b\cdot \left\lfloor \frac{a}{2b} \right\rfloor \geq b
$
holds.
From the assumption $\min\XI>0$, we get $\min\DEL'<\min\DEL$. Therefore, the induction hypothesis is applicable to $J_\alpha = \mathcal{O}_{\DEL'}^{\XI}(S_{\hat{\mathcal{F}}(\alpha)})$:
\begin{align*}
\conn J_{\alpha} &\geq \min_{i=1, \ldots, t} \left\lfloor\frac{\delta'_{i}}{\xi_{i}}  \right\rfloor - 2 \\
&\geq \min_{i=1, \ldots, t} \left\lfloor\frac{\delta_{i} - (p+1)\cdot \xi_{i}}{2\xi_{i}}-2  \right\rfloor \\
&= \left\lfloor \min_{i=1, \ldots, t}\left( \frac{\delta_{i}}{2 \xi_{i}} \right) -2- \frac{p+1}{2}\right\rfloor\\
&\geq k-p,\end{align*}
since $p\geq 1$.

The rest is \emph{standard machinery}, cf. also the end of the proof of~\cite[Thm.~4.3]{MR3135444}:
By the above connectivity bound for $J_\alpha$, as the link of $\alpha$ is a $(k+1)-p-1 = (k-p)$-sphere, there is a commutative diagram
$$
\xymatrix{
\Link(\alpha) \ar[r]^{\hat {\mathcal{F}}|_{\Link(\alpha)}} \ar@{^{(}->}[d] &J_\alpha \ar[r] &\cO(S)\\
K \ar[ru]_{\hat f'}&&
}
$$
with $K$ a $(k-p+1)$-disk with boundary $\partial K = \Link(\alpha)$. The right map identifies plants on $S_\alpha$ with plants on $S$.
Now, in the triangulation $\mathcal{D}^{k+1}$, replace the $(k+1)$-disk $\Star(\alpha) = \alpha*\Link(\alpha)$ with the $(k+1)$-disk $\partial\alpha*K$.
This works because both $\Star(\alpha)$ and $\partial\alpha*K$ have the same boundary $\partial\alpha*\Link(\alpha)$. 
On $\partial\alpha*K$, modify $\hat{\mathcal{F}}$ by
$$
\hat {\mathcal{F}} * \hat {\mathcal{F}}'\colon \partial\alpha*K \to \FO(S).
$$
This is possible since $\hat {\mathcal{F}}'$ agrees with $\hat {\mathcal{F}}$ on $\Link(\alpha) = \partial K$.

New simplices in $\partial \alpha*K$ are of the form $\tau = \beta_{1}*\beta_{2}$, where $\beta_{1}$ is a proper face of $\alpha$ and $\beta_{2}$ is mapped to $J_\alpha$.
Therefore, if $\tau$ is a bad simplex in $\partial\alpha*K$, then $\tau=\beta_{1}$ since plants of $\hat {\mathcal{F}}'(\beta_{2})$ do not share any endpoints with other plants
of $\hat {\mathcal{F}}'(\beta_{2})$ or $\hat {\mathcal{F}}'(\beta_{1})$, so they cannot contribute to a bad simplex. But $\beta_{1}$ is a proper face of $\alpha$, so we have decreased the
number of top dimensional bad simplices. By induction on the number of top dimensional bad simplices, the result follows.

The proof of \textbf{claim~(\ref{plant-conn})} is widely analogous to the proof of claim~(\ref{colored-conn}), replacing $\delta_{i}$ by $\min\DEL$ and $\xi_{i}$ by $\max\XI$ where necessary.

\textbf{Claim~(\ref{multi-conn})} is proved by induction on $t$, for fixed $m$ and $r$. The multifern complex $\cP(S)$ is always non-empty, so the base case $t=m$ is trivial, as are all cases with $t< 4m-2$. We assume for the induction that $t\geq 2m$ holds.

We argue as in part~(\ref{colored-conn}): Let $\alpha$ be a bad simplex of $\mathcal{D}^{k+1}$ of maximal dimension $p$. An arbitrary $p$-simplex in $\FP$ uses at most $(p+1) m$ different $\Delta_i$.
Since $\alpha$ is bad, it uses at most $(p+1)(m-1) + \lfloor \frac{p+1}{2}\rfloor$ different $\Delta_i$.

 Let $\DEL'$ be the remainder of $\DEL$ after removing the arcs of $\alpha$, and $t'$ be the number of positive entries of~$\DEL'$. Recall that by the definition of multiferns, a multifern $\beta$ having endpoints at $\Delta_i$ necessarily implies that $\Delta_i$ disappears in $S_\beta$. Then, we have
\begin{align}
t' &\geq t - \left((p+1)(m-1) + \left\lfloor \frac{p+1}{2}\right\rfloor \right) \label{n-eqn}\\
&= t - \left\lfloor (p+1)\left(m-\frac{1}{2}\right) \right\rfloor\nonumber  \\
&\geq t -  \left\lfloor \left\lfloor\frac{t}{2m-1}\right\rfloor \left(m-\frac{1}{2}\right)\right\rfloor \label{eqn_x} \\
&\geq t - \left\lfloor\frac{t}{2}\right\rfloor\nonumber \\
 &\geq m. \label{eqn_y}
\end{align}
Here, (\ref{eqn_x}) is due to the fact that $p+1 \leq k+2 \leq \left\lfloor \frac{t}{2m-1}\right\rfloor$, and~(\ref{eqn_y}) is true since we demanded that $t\geq 2m$.
Consequently, we can apply the induction hypothesis to $J_\alpha$. Using~(\ref{n-eqn}), we obtain
\begin{align*}
 \conn(J_\alpha) &\geq \left\lfloor \frac{t'}{2m-1}\right\rfloor -2 \\
 &\geq \left\lfloor \frac{t-(p+1)(m-1) - \left\lfloor \frac{p+1}{2}\right\rfloor }{2m-1} - 2\right\rfloor \\
 &\geq \left\lfloor k -  \frac{\left\lfloor (p+1)\left(m-\frac{1}{2}\right) \right\rfloor}{2m-1}\right\rfloor \\
 &\geq \left\lfloor k - \frac{p+1}{2} \right\rfloor \\
 &\geq k-p,
\end{align*}
as $p\geq 1$ (there are no bad vertices).

The rest of the proof can be copied from above (\emph{standard machinery}).
\end{proof}

\section{Combinatorics of colored plant complexes}\label{combinatorics}

In this section, we focus on a specific class of colored plant complexes on a closed disk~$D$. Let $\XI$ be a $t$-tuple of positive integers and $n\in\N$. We consider colored plant complexes of the form
$
\plant \coloneqq \mathcal{O}_{n\cdot\XI}^{\XI}(D)
$
and write $\plantq$ for the set of $q$-simplices of $\plant$.
Because of the specific constellation $\DEL = n\cdot\XI$, it is clear that the dimension of $\plant$ equals $n-1$. 

After applying a suitable homeomorphism, we may assume that $D$ lies in the complex plane as a disk of radius $1$ centered at $0\in\C$, that we have $* = -\mathrm{i}$, and that the $n\xi$ points of $\Delta$ are all real and arranged from left to right in $n$~\emph{clusters} of $\xi$ points each, where $\xi_{i}$ points in each cluster lie in $\Delta_{i}$, for $i=1, \ldots, t$. For each of these clusters, we suppose that the points in $\Delta_{i}$ are placed to the left of the points in $\Delta_{j}$, for $1\leq i<j\leq t$.

\subsection*{The braid action}

We can identify the full braid group $\Br_{n\xi}$ with the mapping class group $\Map(D\setminus \Delta)$, where the standard generator~$\sigma_{i}$ corresponds to a half twist in counterclockwise direction which interchanges the $i$-th and the $(i+1)$-th marked point. The colored braid group $\Br_{n\cdot\XI} \subset \Br_{n\xi}$ may then be identified with the set of mapping classes which leave the given partition of $\Delta$ invariant. This gives a well-defined left action of $\Br_{n\cdot\XI}$ on $\plantq$ by isotopy classes of orientation-preserving diffeomorphisms (or homeomorphisms, which is equivalent) for all $q = 0, \ldots, n-1$. We write $\alpha \mapsto \sigma\cdot\alpha$ for the action of $\sigma\in\Br_{n\cdot\XI}$ on a simplex $\alpha\in\plant$.

Let $\alpha\in\plantq$ be a $q$-simplex. The sorting of the inward pointing unit tangent vectors of representative arcs of $\alpha$ at $*$ in clockwise order is well-defined, cf.~Remark~\ref{plant-order}. This ordering is invariant under the $\Br_{n\cdot\XI}$-action.

We assign an \emph{index}~$i\in\{0, \ldots, q\}$ and a \emph{color}~$j\in\{1, \ldots, t\}$ to each arc in $\alpha$:
\begin{itemize}
\item 
An arc is labeled with the index $i$ if it belongs to the $(i+1)$-th plant in $\alpha$, where we use the order on the plants of $\alpha$ described in Remark~\ref{plant-order}.
\item
An arc is labeled with the color $j$ if its endpoint lies in $\Delta_{j}$.
\end{itemize}
Consequently, any $q$-simplex $\alpha$ defines a unique sequence
\begin{equation}\label{ic-sequence}
\omega_{\alpha} = (i_{1}, j_{1}), (i_{2}, j_{2}), \ldots, (i_{(q+1)\cdot\xi}, j_{(q+1)\cdot\xi}),
\end{equation}
where $i_{k}$ is the index and $j_{k}$ the color of the $k$-th arc of $\alpha$, in clockwise order at $*$.
By definition of the mapping class group, this sequence is $\Br_{n\cdot\XI}$-invariant.

\begin{figure}
\centering
\captionsetup[subfigure]{labelformat=empty}
\begin{subfigure}[b]{0.49\linewidth}
\centering
\includegraphics[scale=0.2]{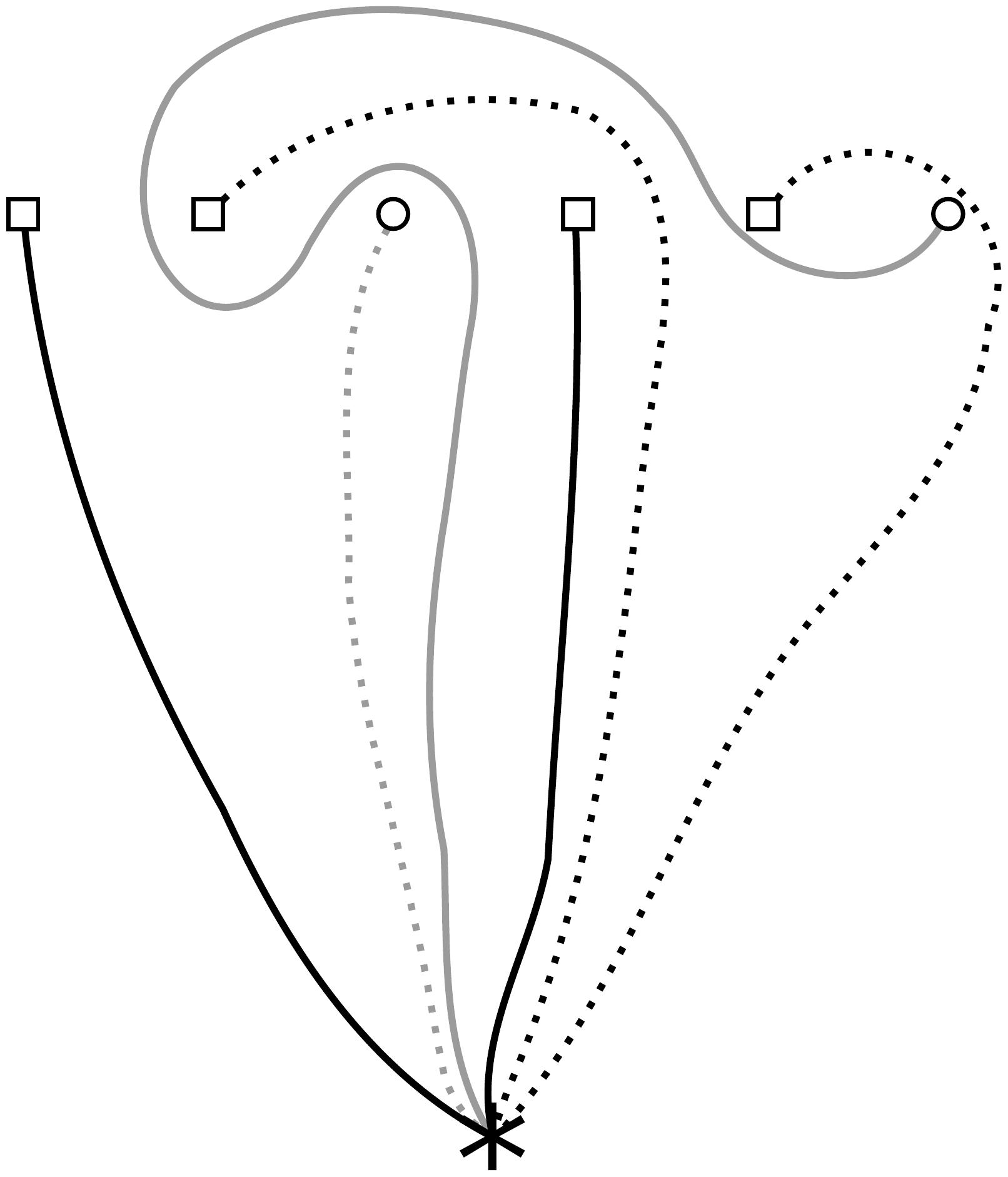}
\caption{$\omega_{\alpha} = (0,1),(1,2),(0,2),(0,1),(1,1),(1,1) $}
\end{subfigure}
\begin{subfigure}[b]{0.49\linewidth}
\centering
\includegraphics[scale=0.2]{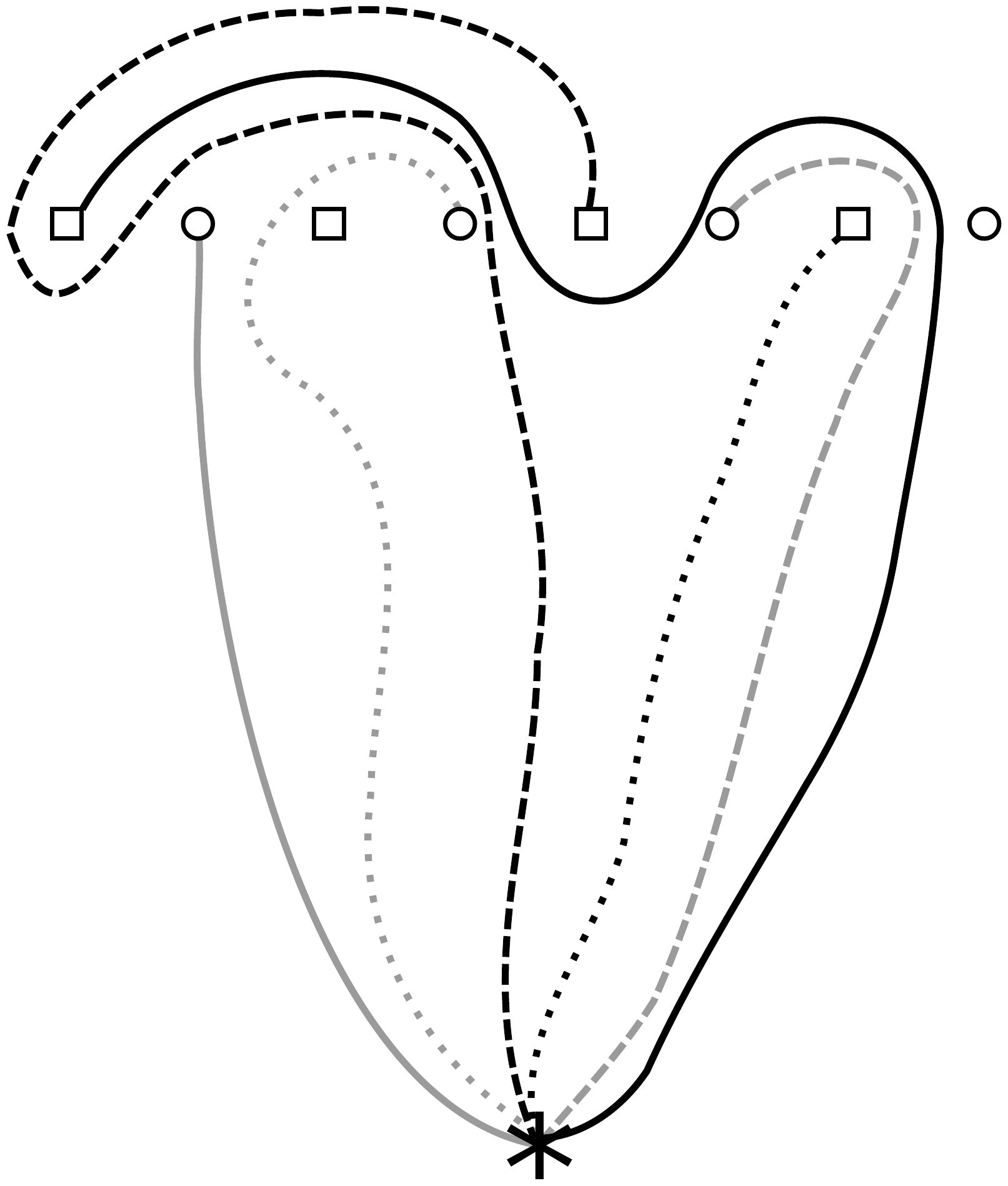}
\caption{$\omega_{\beta} = (0,2),(1,2),(2,1),(1,1),(2,2),(0,1)$}
\end{subfigure}
\caption{IC-sequences of simplices $\alpha\in\mathcal{O}^{[2,(2,1)]}_{1}$ and $\beta\in\mathcal{O}^{[4,(1,1)]}_{2}$.}
\label{fig-sequences}
\end{figure}

\begin{definition}
We call~(\ref{ic-sequence}) the \emph{IC-sequence} (index-color-sequence) of $\alpha\in\plantq$.
\end{definition}

With the help of IC-sequences, we are able to count the orbits of the $\Br_{n\cdot\XI}$-action:

\begin{lemma}\label{orbits}
Let $q < n$.
The set $\plantq$ decomposes into
$$
l_{q}^{\XI} \coloneqq \#\left\{\plantq / \Br_{n\cdot \XI} \right\} = \frac{(\xi(q+1))!}{(q+1)!\cdot(\xi_{1}!\cdot\ldots\cdot\xi_{t}!)^{q+1}}
$$
$\Br_{n\cdot\XI}$-orbits. The orbits are in bijective correspondence to the set of occurring IC-sequences for $q$-simplices.
\end{lemma}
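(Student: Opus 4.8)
The plan is to prove that $\alpha\mapsto\omega_{\alpha}$ induces the asserted bijection. Since the IC-sequence is $\Br_{n\cdot\XI}$-invariant, the assignment $\alpha\mapsto\omega_{\alpha}$ descends to a well-defined surjection from $\plantq/\Br_{n\cdot\XI}$ onto the set $\Omega_{q}$ of IC-sequences that actually occur for $q$-simplices. It therefore remains to establish two things: (i) that this surjection is injective, i.e.\ two $q$-simplices with the same IC-sequence lie in a common orbit, and (ii) that $|\Omega_{q}| = l_{q}^{\XI}$.

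For (i) I would run a change-of-coordinates argument. Let $\alpha,\beta\in\plantq$ with $\omega_{\alpha}=\omega_{\beta}$, and fix disjoint representative arc systems for both. The equality of IC-sequences provides a bijection $a_{k}\leftrightarrow b_{k}$ between the $(q+1)\xi$ arcs of $\alpha$ and of $\beta$ that preserves the clockwise order of inward tangent vectors at $*$, matches the plant-index of each arc (hence, via Remark~\ref{plant-order}, the labelling of the plants), and has $a_{k}$ and $b_{k}$ ending in the same colour class $\Delta_{j_{k}}$. The union of the arcs of $\alpha$ (respectively $\beta$) is a tree with single vertex $*$, so cutting $D$ along it produces a disk, and the equality of IC-sequences makes the two cut-open disks combinatorially identical: the same cyclic pattern of pieces of $\partial D$, doubled arcs, and arc-endpoints appears along their boundaries. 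An orientation-preserving homeomorphism between these cut-open disks compatible with that boundary pattern, after regluing, yields an orientation-preserving homeomorphism $\phi$ of $D$ fixing $\partial D$, carrying $\alpha$ to $\beta$ and sending the endpoint of $a_{k}$ to that of $b_{k}$; since corresponding endpoints share their colour, $\phi$ preserves the partition $\Delta=\Delta_{1}\sqcup\dots\sqcup\Delta_{t}$, so it defines an element $\sigma\in\Br_{n\cdot\XI}=\Map(D\setminus\Delta)$ with $\sigma\cdot\alpha=\beta$.

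For (ii) I would first describe $\Omega_{q}$ combinatorially. A sequence $\omega=\bigl((i_{1},j_{1}),\dots,(i_{N},j_{N})\bigr)$ with $N=(q+1)\xi$ lies in $\Omega_{q}$ if and only if: (a) each value in $\{0,\dots,q\}$ occurs exactly $\xi$ times among the $i_{k}$; (b) for each $i$, exactly $\xi_{r}$ of the $\xi$ positions with $i_{k}=i$ carry colour $r$; and (c) the values $0,1,\dots,q$ make their first appearances in this order. Necessity of (a) and (b) is immediate ($q$-simplices have $q+1$ plants, each a colored $\XI$-plant), and (c) is forced by the convention of Remark~\ref{plant-order} that plant number $i+1$ is the one whose clockwise-first arc at $*$ precedes that of every higher-numbered plant. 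Conversely, given $\omega$ satisfying (a)--(c), I realize it using $q<n$ (so that the $(q+1)\xi_{r}$ required endpoints are available among the $n\xi_{r}$ points of $\Delta_{r}$): draw arcs $a_{1},\dots,a_{N}$ one at a time, placing the germ of $a_{k}$ at $*$ clockwise after those of $a_{1},\dots,a_{k-1}$ and routing $a_{k}$ to an unused point of $\Delta_{j_{k}}$ through the complement of the previously drawn arcs, which is connected since those arcs form a tree. Grouping the resulting arcs by index gives a $q$-simplex of $\plant$ whose canonical IC-sequence is exactly $\omega$, precisely because (c) makes the ad hoc index labels agree with the Remark~\ref{plant-order} numbering.

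It remains to count $|\Omega_{q}|$. Ignoring (c), the words of length $N$ over $\{0,\dots,q\}$ with each symbol appearing $\xi$ times number $\frac{N!}{(\xi!)^{q+1}}$, and for each such word the colourings satisfying (b) number $\bigl(\frac{\xi!}{\xi_{1}!\cdots\xi_{t}!}\bigr)^{q+1}$, so there are $\frac{N!}{(\xi_{1}!\cdots\xi_{t}!)^{q+1}}$ sequences satisfying (a) and (b). The group $\mathfrak{S}_{q+1}$ acts freely on these by relabelling the symbols $0,\dots,q$ while fixing colours; each orbit has size $(q+1)!$ and contains exactly one sequence satisfying (c), whence $|\Omega_{q}|=\frac{N!}{(q+1)!(\xi_{1}!\cdots\xi_{t}!)^{q+1}}=\frac{(\xi(q+1))!}{(q+1)!(\xi_{1}!\cdots\xi_{t}!)^{q+1}}=l_{q}^{\XI}$, which together with (i) proves the lemma. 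I expect the main obstacle to be step (i): making the change-of-coordinates construction precise enough to guarantee that the homeomorphism realizing the matching of the two arc systems can be chosen to preserve the partition of $\Delta$, so that it lies in the colored braid group $\Br_{n\cdot\XI}$ and not merely in the full mapping class group; the combinatorial characterization and the enumeration in step (ii) are then routine.
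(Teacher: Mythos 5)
Your proof is correct in outline, and the enumeration in step~(ii) is essentially the same calculation as in the paper, merely packaged differently (you count all index-words and quotient by the free $\mathfrak{S}_{q+1}$-relabelling action; the paper counts set-partitions into index-classes directly, which absorbs the $(q+1)!$ up front). Where you genuinely diverge is in step~(i). The paper proves transitivity on simplices with a common IC-sequence by a retraction argument in the style of \cite[Prop.~5.6]{0912.0325}: using surjectivity of $\Br_{n\cdot\XI}\to\mathfrak{S}_{n\cdot\XI}$ to first arrange that $\alpha$ and $\beta$ have identical endpoints and identical germs at $*$, then constructing homeomorphisms $G$, $H$ supported on disjoint tubular neighbourhoods of the arcs that retract each arc system to the common germ, and taking $H^{-1}\circ G$. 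Your argument instead cuts $D$ open along the two trees of arcs and matches the cut-open disks by a boundary-pattern-preserving homeomorphism; this is the ``change of coordinates'' method the paper itself points to as an alternate route (``adapting the methods from the proof of \cite[Prop.~2.2(1)]{MR3135444}''). The cut-and-paste argument is arguably cleaner conceptually, while the paper's retraction argument is more explicit and avoids having to track the extra marked points in the cut surface.

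That last point is exactly where your write-up has a gap, and you are right to flag it at the end: knowing that $\phi$ sends the endpoint of $a_k$ to the endpoint of $b_k$ and that corresponding endpoints share a colour only controls $\phi$ on the $(q+1)\xi$ marked points used by the arc systems, not on the remaining $(n-q-1)\xi$ points of $\Delta$. For $\phi$ to define an element of $\Br_{n\cdot\XI}$ rather than merely of $\Br_{n\xi}$, it must preserve \emph{every} $\Delta_i$, so you must also arrange that the unused marked points are matched colour-by-colour. This is possible but needs to be said: both $\alpha$ and $\beta$ are $q$-simplices of colored $\XI$-plants, so each uses up exactly $(q+1)\xi_i$ points of $\Delta_i$, hence the two cut-open disks contain $n\xi_i-(q+1)\xi_i$ leftover points of each colour; after choosing any boundary-pattern-preserving homeomorphism between the cut disks, post-compose with a homeomorphism of the target cut disk, supported away from the boundary, that permutes the leftover marked points so that colours correspond. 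Also make explicit that your homeomorphism of the cut-open disks can be chosen to restrict to the identity on the $\partial D$-segment of the cut boundary, so that $\phi$ fixes $\partial D$ pointwise and genuinely represents a mapping class in $\Map(D\setminus\Delta)$. With these two clarifications your step~(i) is complete, and the rest of the argument is sound.
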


\begin{remark}
A priori, the number $l_{q}^{\XI}$ depends not only on $q$ and $\XI$ but also on $n$. As a consequence of the lemma, the actual quantity is independent on $n$ as long as $q<n$; therefore, it makes sense to omit $n$ from the notation.
\end{remark}

\begin{proof}
We start by counting the number of possible IC-sequences.
We say a sequence of type~(\ref{ic-sequence}) is \emph{$q$-feasible} if each index $i\in\{0, \ldots, q\}$ is assigned $\xi$ times; for each index, each color $j = \{1, \ldots, t\}$ is assigned $\xi_{j}$ times; and the index $i+1$ does not appear before the index $i$, for all $i = 0, \ldots, q-1$.
Since $D$ is path-connected, a $q$-feasible sequence appears as the IC-sequence of a $q$-simplex if and only if $q< n$, i.e., as long as there are enough endpoints for the arcs.

We may now count the number of $q$-feasible sequences: There are
$$
{\binom{\xi (q+1)}{\underbrace{\xi, \ldots, \xi}_{(q+1)\text{ times}}}}\frac{1}{(q+1)!} = \frac{(\xi(q+1))!}{(q+1)!\cdot(\xi!)^{q+1}}
$$
different partitionings of $\xi (q+1)$ arcs into subsets of size $\xi$. Any such partition gives a unique indexing (recall that the first arc with index $i$ appears before the first arc with index $i+1$).
 
Given an index $i$, the $\xi$ arcs labeled with it can be colored in
$
\binom{\xi}{\xi_{1}, \ldots, \xi_{t}}
$
different ways, which makes it
$$
l_{q}^{\XI} = \frac{(\xi(q+1))!}{(q+1)!\cdot(\xi!)^{q+1}}\cdot \binom{\xi}{\xi_{1}, \ldots, \xi_{t}}^{q+1}
= \frac{(\xi(q+1))!}{(q+1)!\cdot(\xi_{1}!\cdot\ldots\cdot\xi_{t}!)^{q+1}}
$$
different choices of $q$-feasible sequences, as there are $(q+1)$ different indices.

We have already seen above that the IC-sequence of a $q$-simplex is invariant under the $\Br_{n\cdot\XI}$-action. In the second part of the proof, we will now show that the group $\Br_{n\cdot\XI}$ acts transitively on simplices with according IC-sequences by an argument similar to \cite[Prop.~5.6]{0912.0325}. An alternate proof can be obtained by adapting the methods from the proof of \cite[Prop.~2.2(1)]{MR3135444}.

Let $\alpha, \beta$ be two $q$-simplices with the same IC-sequence $\omega$.  We choose representative non-intersecting collections of the plants of $\alpha$ and $\beta$ with arcs $a_{1}, \ldots, a_{(q+1)\xi}$ and $b_{1}, \ldots, b_{(q+1)\xi}$, respectively, subscripts chosen such that the arcs are arranged in clockwise order at $*$. Since $\Br_{n\cdot\XI}$ surjects onto $\mathfrak{S}_{n\cdot\XI}$, we may assume that $a_{i}$ and $b_{i}$ have the same endpoint $a_{i}(1) = b_{i}(1)$ for all $i = 1, \ldots, (q+1)\xi$. Furthermore, after a suitable isotopy, we may as well assume that for some $\epsilon>0$, we have $a_{i}(t) = b_{i}(t)$ for all $0 \leq t \leq \epsilon$. Hence, if we choose a continuous increasing function $h\colon I \to \R$ with $h(t) = t$ for $0 \leq t \leq \epsilon/2$ and $h(1) = \epsilon$, we obtain $a_{i} \circ h = b_{i} \circ h$ for all $i$.

It remains to show that there is an orientation-preserving homeomorphism $G$ of~$D$ which, for all $i = 1, \ldots, (q+1)\xi$, retracts the arc $a_{i}$ to $a_{i} \circ h$, and fixes those marked points which are no endpoints of arcs in $\alpha$. In addition, we construct a similar map $H$ which carries $b_{i}$ to $b_{i} \circ h$ for all $i$. Then, the homeomorphism $H^{-1} \circ G$ defines a mapping class which carries $\alpha$ to $\beta$, and which corresponds to an element in $\Br_{n\cdot\XI}$ because the IC-sequences of $\alpha$ and $\beta$ coincide. To construct $G$, choose disjoint closed tubular neighborhoods $U_{i}$ of $a_{i}|_{[\epsilon/3, 1]}$ for all $i$. Such neighborhoods exist since the arcs are disjoint except at $*$. Now, $U_{i}$ is homeomorphic to a closed disk, and so there exists a homeomorphism which restricts to the identity on $\partial U_{i}$ and which carries the arc segment $U_{i} \cap a_{i}$ to its retraction $U_{i} \cap (a_{i} \circ h)$. Combining these homeomorphisms and extending them by the identity on $D \setminus \bigcup_{i} U_{i}$ yields the desired homeomorphism~$G$. The construction of $H$ is analogous.
\end{proof}

\begin{lemma}\label{stabilizers}
Let $q< n$. For any simplex $\alpha\in\plantq$, the stabilizer of $\alpha$ under the $\Br_{n\cdot\XI}$-action is isomorphic to $\Br_{(n-q-1)\cdot\XI}$. In particular, there is a bijection between the elements of the orbit $\Br_{n\cdot\XI}\cdot\alpha$ and the cosets in $\Br_{n\cdot\XI} / \Br_{(n-q-1)\cdot\XI}$.
\end{lemma}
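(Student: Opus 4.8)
The plan is to realise $\Stab(\alpha)$ as the mapping class group of the disk obtained by cutting $D$ open along the arcs of $\alpha$, and then read off from the set-up of Section~\ref{combinatorics} that this group is $\Br_{(n-q-1)\cdot\XI}$.

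First I would fix a representative of $\alpha$ by a system of $(q+1)\xi$ arcs $a_{1}, \ldots, a_{(q+1)\xi}$ of its plants, pairwise disjoint except at $*$ and numbered in clockwise order at $*$, and set $\Gamma = a_{1}\cup\cdots\cup a_{(q+1)\xi}$. The first observation is that $\phi\in\Br_{n\cdot\XI}$ stabilises $\alpha$ if and only if it fixes the isotopy class of each arc $a_{k}$: since $\phi$ is orientation preserving and fixes $*$, it carries the arcs of $\alpha$ in clockwise order at $*$ to the arcs of $\phi\cdot\alpha$ in clockwise order, so $\phi\cdot\alpha = \alpha$ forces $\phi$ to send the class of $a_{k}$ to the class of $a_{k}$ for every $k$; the converse is clear. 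Next, since $\Gamma$ is an embedded tree meeting $\partial D$ only in $*$, cutting $D$ along $\Gamma$ produces a disk $D_{\alpha}$; the $(q+1)\xi$ endpoints of the arcs of $\alpha$ become boundary points, while the remaining $(n-q-1)\xi$ points of $\Delta$ (the interiors of arcs avoid $\Delta$) stay in the interior. Because every plant of $\alpha$ is a colored $\XI$-plant and plants of $\alpha$ share no point of $\Delta$, these remaining points form a set $\Delta'$ whose $i$-th colour class has exactly $n\xi_{i} - (q+1)\xi_{i} = (n-q-1)\xi_{i}$ points, i.e.\ $\Delta'$ realises the datum $(n-q-1)\cdot\XI$.

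I would then invoke the standard description of mapping class groups under cutting along a system of disjoint arcs (the same circle of ideas used in the transitivity arguments cited for Lemma~\ref{orbits}, \cite[Prop.~2.2]{MR3135444} and \cite[Prop.~5.6]{0912.0325}): the restriction map gives an isomorphism from $\Stab(\alpha)$ onto the group of mapping classes of $D_{\alpha}$ that fix $\partial D_{\alpha}$ pointwise and preserve the partition of $\Delta'$. Concretely, an element of $\Stab(\alpha)$, being isotopic to one that fixes a regular neighbourhood of $\Gamma$ pointwise by the previous paragraph, restricts to such a mapping class of $D_{\alpha}$; the inverse is the ``glue back'' map that extends a homeomorphism of $D_{\alpha}$ by the identity across the neighbourhood of $\Gamma$. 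By the identification $\Br_{m} \cong \Map(D\setminus\Delta)$ recalled at the beginning of this section together with its colored refinement, the target group is precisely $\Br_{(n-q-1)\cdot\XI}$, which proves the first assertion; for $q = n-1$ both sides are trivial, so the statement holds throughout the range $q<n$.

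Finally, the ``in particular'' is the orbit--stabiliser theorem: $\phi\mapsto\phi\cdot\alpha$ descends to a $\Br_{n\cdot\XI}$-equivariant bijection between $\Br_{n\cdot\XI}/\Stab(\alpha)$ and the orbit $\Br_{n\cdot\XI}\cdot\alpha$, and $\Stab(\alpha)$ has just been identified with $\Br_{(n-q-1)\cdot\XI}$. I expect the only delicate point to be the cutting isomorphism, specifically the injectivity of the glue-back homomorphism, which amounts to promoting an isotopy of $D$ to the identity to one that is constant on $\Gamma$; this is classical (Alexander method / change-of-coordinates principle for surfaces) but should be cited with care, and one should also record that the disjoint arcs fixed by $\phi$ up to isotopy can be straightened simultaneously. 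The remaining ingredients --- that $D_{\alpha}$ is a disk, the bookkeeping giving $\Delta' = (n-q-1)\cdot\XI$, and the orbit--stabiliser step --- are routine.
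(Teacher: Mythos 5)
Your proposal follows essentially the same route as the paper: choose a system of disjoint arcs $\Sigma$ (your $\Gamma$) representing $\alpha$, observe from the clockwise ordering at $*$ that a stabilizing mapping class fixes each arc's isotopy class, and identify $\Stab(\alpha)$ with the mapping class group of the complementary $(n-q-1)\cdot\XI$-marked disk, i.e.\ $\Br_{(n-q-1)\cdot\XI}$; the paper phrases this as an inclusion of subgroups $\Map(D\setminus\Delta,\Sigma)\cong\Map(D\setminus(\Delta\setminus\Sigma))\hookrightarrow(\Map(D\setminus\Delta))_{\alpha}$ rather than explicitly cutting, but the two formulations coincide up to standard identifications. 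The only substantive difference is that you defer the delicate step --- that every $\phi\in\Stab(\alpha)$ can be isotoped to fix a neighbourhood of $\Gamma$ pointwise --- to classical references, whereas the paper proves it self-containedly by induction on the arcs: isotopy extension fixes one arc at a time, and an innermost-disk argument using transversality and the vanishing of $\pi_{2}(D,\Sigma_{j}^{\delta}\setminus\Sigma_{j})$, together with \cite[Thm.~3.1]{MR0214087}, shows the isotopy of the next arc can be chosen disjoint from those already straightened. You correctly flag this as the point requiring care; in a final write-up you would need either the paper's inductive argument or a citation that explicitly covers the simultaneous straightening of a disjoint arc system relative to the already-fixed part, since an unadorned appeal to the Alexander method does not by itself supply the inductive disjointness.
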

\begin{proof}
We show that the stabilizer of any simplex $\alpha\in\plantq$ is isomorphic to $\Br_{(n-q-1)\cdot\XI}$. Then, the second assertion follows directly from the orbit-stabilizer-theorem.

Let $\Sigma\subset D$ be the union of a representative set of the arcs of $\alpha$, only intersecting at~$*$. In clockwise order around $*$, denote the arcs in $\Sigma$ by $a_1, \ldots, a_{(q+1)\xi}$. By $\Map(D\setminus \Delta, \Sigma)$, we denote the group of isotopy classes of orientation-preserving diffeomorphisms of $D\setminus \Delta$ which fix $\Sigma$ pointwise. Since $\Sigma$ is contractible, the group $\Map(D\setminus\Delta, \Sigma)$ may be identified with $\Map(D\setminus (\Delta \setminus \Sigma)) \cong \Br_{(n-q-1)\cdot\XI}$, which itself may be identified with a subgroup of $\Br_{n\cdot\XI}$. We will show that the inclusion of subgroups of  $\Map(D\setminus\Delta)$
\begin{equation*}\label{injection-mcg}
\Map(D\setminus (\Delta \setminus \Sigma)) \hookrightarrow \left(\Map\left(D\setminus\Delta\right)\right)_{\alpha}
\end{equation*}
is surjective and hence an isomorphism.

For this part, we follow the similar proof in \cite[Prop.~2.2(2)]{MR3135444}. Choose an element $\phi \in\Diff^+(D\setminus\Delta)$ which stabilizes the simplex $\alpha$. We have to show that $\phi$ is isotopic to a diffeomorphism that fixes $\Sigma$ pointwise.

By definition, $\phi(a_1)$ is isotopic to $a_1$. The isotopy extension theorem \cite{MR0123338} implies that we can extend a corresponding isotopy to an ambient isotopy, so we may assume that $\phi$ fixes $a_1$ pointwise. We proceed by induction on the number of fixed arcs. Let $j>1$, and assume that $\phi$ fixes $\Sigma_{j} = a_1\cup \ldots\cup a_{j-1}$ pointwise. The arc $a_j$ is isotopic to $\phi(a_j)$, and we must show that the corresponding isotopy can be chosen disjointly from~$\Sigma_{j}$. If this holds, another application of the isotopy extension theorem implies the inductive step and thus the statement.

Let $H\colon I \times I \to D$ be a smooth isotopy that conveys $\phi (a_{j})$ to $a_{j}$, and assume that $H$ is transverse to $\Sigma_{j}$, using the transversality theorem \cite{MR0061823}. Here, $H(0,-)$ and $H(1,-)$ correspond to the arcs $\phi(a_{j})$ and $a_{j}$, respectively. Furthermore, we have $H(-,0) = *$, and $H(-,1) \in\Delta$ is the endpoint of $a_{j}$.

Now, consider the preimage $H^{-1}(\Sigma_{j})$. The line $I \times \{0\}$ is the preimage of $*$, and by transversality, all other components must be circles in the interior of $I \times I$.

Since the intersection number is finite, there is at least one such circle which encloses no further circle in $H^{-1}(\Sigma_{j})$. Let $D_{0}$ be the closed disk it encloses. Let furthermore  $\Sigma_{j}^{\delta}\subset D$ be a closed $\delta$-thickening of $\Sigma_{j}$ with $\delta>0$ chosen such that $\Sigma^{\delta}_{j}$ is still contractible. By continuity of $H$, we may now choose $\epsilon>0$ such that for a closed $\epsilon$-neighborhood $D_{0}^{\epsilon}$ of $D_{0}$, we have $H(\partial D_{0}^{\epsilon}) \subset \Sigma^{\delta}_{j}$, 

Restriction of $H$ to the closed disk $D_{0}^{\epsilon}$ defines an element of the relative homotopy group $\pi_{2}(D,\Sigma_{j}^{\delta} \setminus \Sigma_{j})$. This group is trivial. We may thus replace $H$ on $D_{0}^{\epsilon}$ by a homotopic map $H'$ with $H'|_{\partial D_{0}^{\epsilon}} = H|_{\partial D_{0}^{\epsilon}}$ and image in $\Sigma_{j}^{\delta} \setminus \Sigma_{j}$, which exists since $\Sigma_{j}^{\delta} \setminus \Sigma_{j}$ is simply connected.

By extending $H'$ to $I \times I$ by $H'|_{(I \times I)\setminus D_{0}^{\epsilon}} =  H|_{(I \times I)\setminus D_{0}^{\epsilon}}$, we obtain a homotopy $H'$ with $\pi_{0} (H'^{-1}(\Sigma_{j})) < \pi_{0} (H^{-1}(\Sigma_{j}))$.
Inductively, we construct a homotopy $H''$ which is disjoint from $\Sigma_{j}$.
Finally, by \cite[Thm.~3.1]{MR0214087}, $H''$ can be replaced by an isotopy in $(D\setminus \Sigma_{j}) \cup \{*\}$.
\end{proof}

\subsection*{Standard simplices}

As a consequence of Lemma~\ref{orbits} and Lemma~\ref{stabilizers}, there are bijections between the set $\plantq$ of $q$-simplices and the disjoint union of $l_{q}^{\XI}$ copies of $\Br_{n\cdot\XI} / \Br_{(n-q-1)\cdot\XI}$ for all $q= 0,\ldots, n-1$. 
Our next goal is to make these bijections compatible with the semi-simplicial structure on $\plant$: We want to fix bijections and describe the structure of a semi-simplicial set on
$$
\cplant = \bigsqcup_{q=0}^{n-1} \cplantq =  \bigsqcup_{q=0}^{n-1} \bigsqcup_{l_{q}^{\XI} \text{ copies}} \Br_{n\cdot\XI} / \Br_{(n-q-1)\cdot\XI}
$$
which is compatible with the face maps, thus defines a semi-simplicial isomorphism between $\cplant$ and $\plant$.

Let 
$\omega$
be a fixed IC-sequence. In what follows, we define a standard $q$-simplex in $\plantq$ for the IC-sequence $\omega$: 
We resort the terms of $\omega$ by the consecutive sorting criteria \emph{index}~(1st), \emph{color}~(2nd) and \emph{position in the IC-sequence}~(3rd), and draw the arcs of a $q$-simplex in this new order, respecting the order at~$*$ prescribed by the IC-sequence. We draw the arcs such that the endpoint of each arc is chosen as the leftmost free marked point, where we always \emph{undercross} marked points if possible. This is compatible with the coloring because of the arrangement of marked points.

This process generates a set of $(q+1)\cdot\xi$ arcs which is unique up to isotopy since we work on a disk $D$. Distinguishing by the indices, these arcs can be divided into $q+1$ colored $\XI$-plants, which in turn define a $q$-simplex $\alpha_{\omega} \in\plantq$. Using these simplices, every $q$-simplex can be written as $\sigma\cdot\alpha_{\omega}$ for some IC-sequence $\omega$ and some $\sigma\in\Br_{n\cdot\XI}$ as consequence of the transitivity of the $\Br_{n\cdot\XI}$-action on simplices with the same IC-sequence.

\begin{figure}
\centering
\captionsetup[subfigure]{labelformat=empty}
\begin{subfigure}[b]{0.48\linewidth}
\centering
\includegraphics[scale=0.2]{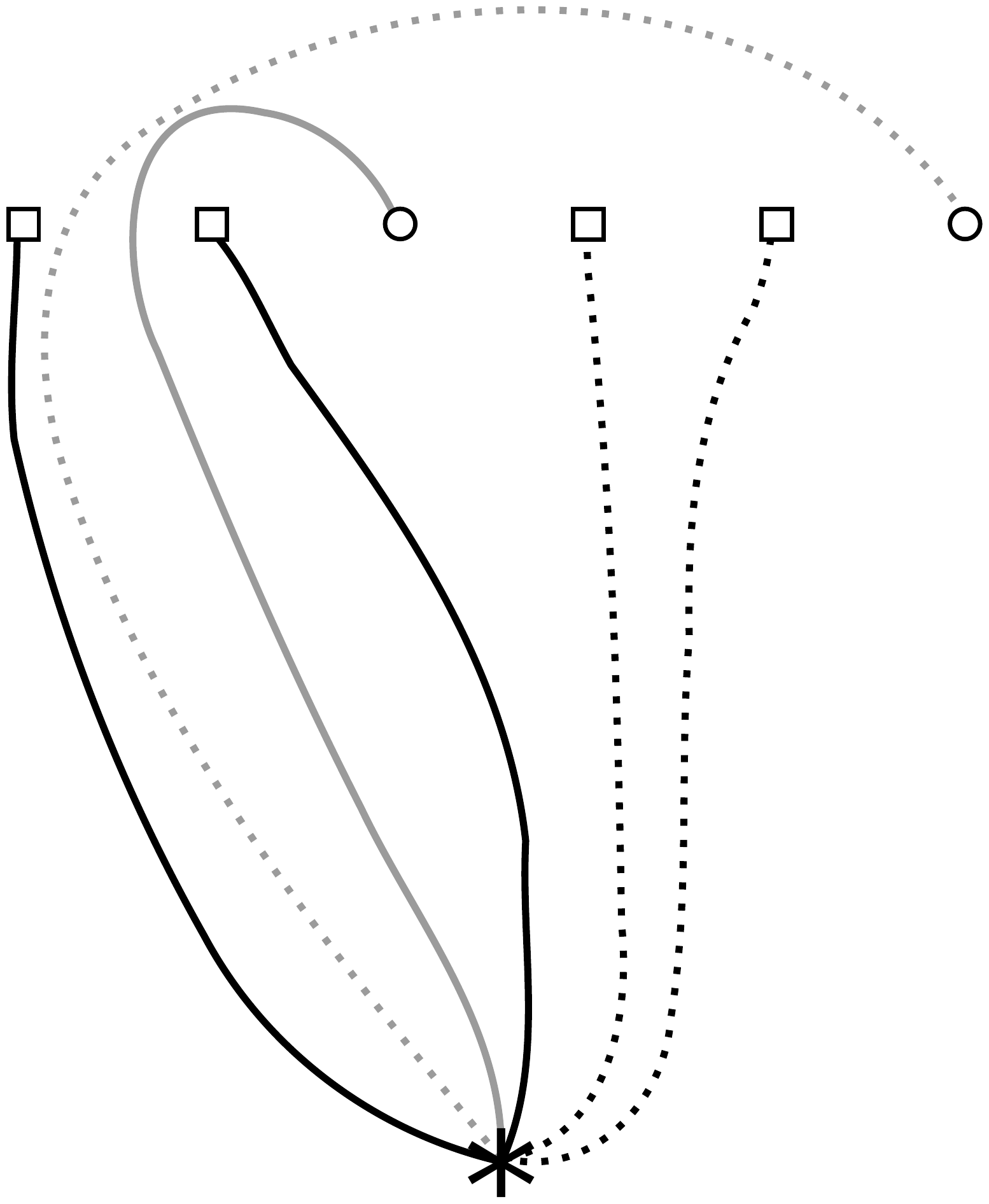}
\caption{$\alpha_{\omega_{\alpha}}\in\mathcal{O}^{[2,(2,1)]}_{1}$}
\end{subfigure}
\begin{subfigure}[b]{0.48\linewidth}
\centering
\includegraphics[scale=0.2]{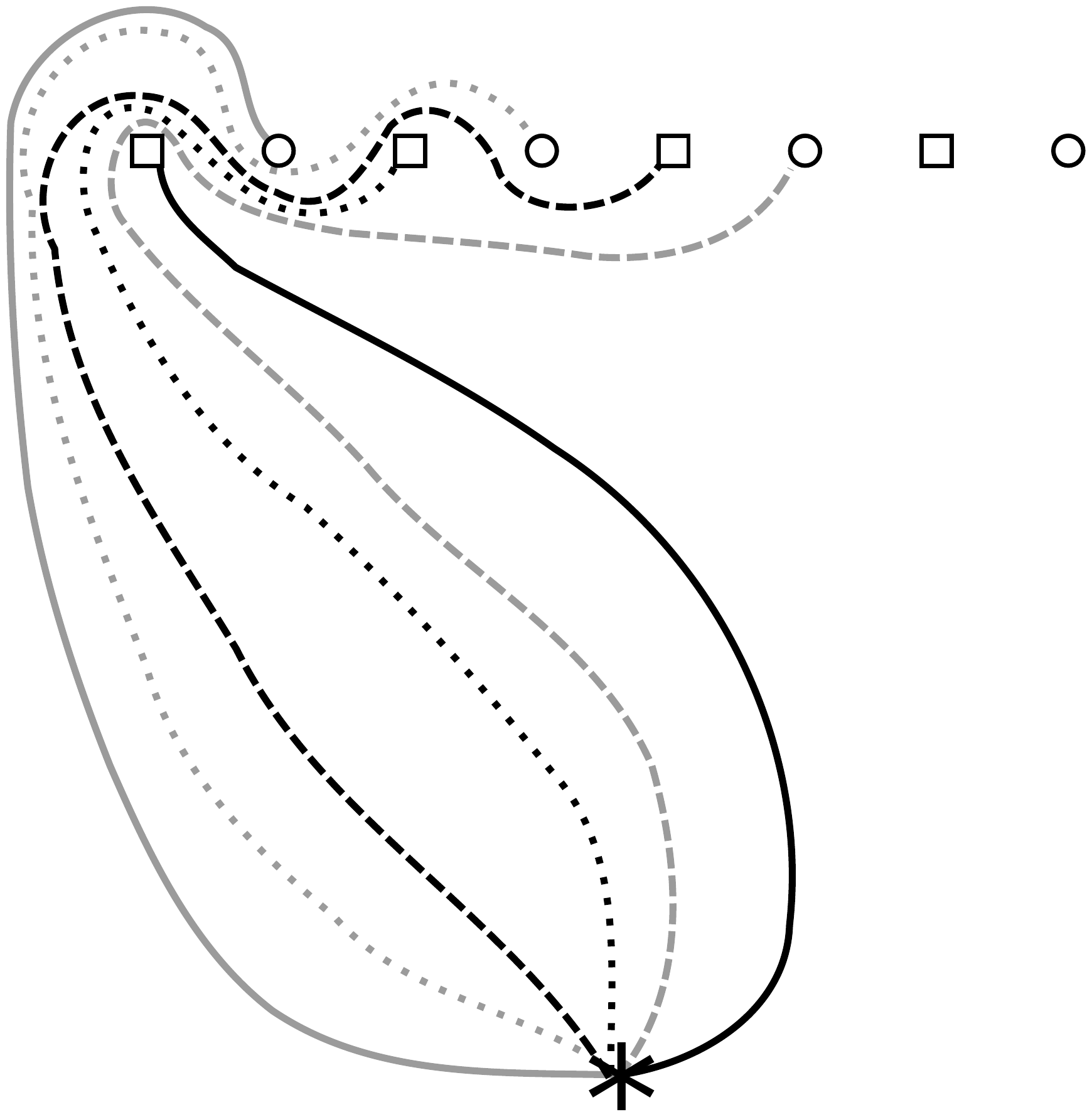}
\caption{$\alpha_{\omega_{\beta}}\in\mathcal{O}^{[4,(1,1)]}_{2}$}
\end{subfigure}
\caption{Standard simplices for the IC-sequences of $\alpha$ and $\beta$ from Figure~\ref{fig-sequences}.}
\label{fig-standard}
\end{figure}

\begin{definition}
The simplex $\alpha_{\omega} \in\plantq$ is called the \emph{standard simplex for the IC-sequence $\omega$}.
\end{definition}

\begin{definition}
Let $\alpha\in\plantq$ be a $q$-simplex, and $\omega$ its IC-sequence. The sequence
$
\widetilde\omega = (p_{1}, p_{2}, \ldots, p_{(q+1)\cdot\xi})
$
induced by the reordering of the IC-sequence of $\alpha$ described above, where $p_{i}$ is the position in the IC-sequence of the corresponding arc, is called the \emph{P-sequence} (position sequence) of $\alpha$. \end{definition}

\begin{example}
The P-sequences of the simplices in Figures~\ref{fig-sequences} and~\ref{fig-standard} are given by
$
\tilde\omega_{\alpha} = (1,4,3,5,6,2)
$
and
$
\tilde\omega_{\beta} = (6,1,4,2,3,5).
$
\end{example}

We identify $\Br_{n\xi}$ with the mapping class group of the $n\xi$-punctured disk in such a way that $\Br_{n\cdot\XI}$ is the stabilizer of the colored configuration of $n\xi$ points in $D$. The element $\sigma_{i\xi + j}$, for $i=0, \ldots, q$ and $j=1, \ldots, \xi-1$, describes the isotopy class of a a half twist that interchanges the $j$-th and the $(j+1)$-th point of the $(i+1)$-th cluster.
On the other hand, the elements of the form $\sigma_{i\xi}$, $i=1, \ldots, q$, describe a half twist that interchanges the $\xi$-th point of the $i$-th cluster with the first point of the $(i+1)$-th cluster. 

We know from Lemma~\ref{stabilizers} that the stabilizer of a $q$-simplex is isomorphic to the group $\Br_{(n-q-1)\cdot\XI}$. For any standard $q$-simplex $\alpha_{\omega}$, we may thus write
\begin{align*}
(\Br_{n\cdot\XI})_{\alpha_{\omega}}&= 
\left\langle
\sigma_{k}
\mid
(q+1)\cdot\xi+1 \leq  k \leq n \xi-1
\right\rangle \cap \Br_{n\cdot\XI}.
\end{align*}
As this expression is independent of $\omega$, we may denote the stabilizer of \emph{any} standard $q$-simplex by
$$
L_{q} = (\Br_{n\cdot\XI})_{\alpha_{\omega}} \cong \Br_{(n-q-1)\cdot\XI}.
$$

Now, once and for all, we fix the bijection
\begin{align*}
\Gamma_{\omega}\colon \Br_{n\cdot\XI}/L_{q}  &\to \Br_{n\cdot\XI}\cdot\alpha_{\omega} \\
\sigma L_{q} &\mapsto \sigma\cdot\alpha_{\omega}
\end{align*}
for each $\Br_{n\cdot\XI}$-orbit in $\plant$. Collecting these maps for all IC-sequences, we obtain a global bijection
\begin{align*}
\Gamma\colon\cplant&\to\plant\\
(\omega_{p}, \sigma L_{p}) &\mapsto \sigma\cdot\alpha_{\omega_{p}} \:\:\:\text{for all }p\geq 0,
\end{align*}
where $\omega_{p}$ is an IC-sequence of a $p$-simplex.

\subsection*{Face maps}

Recall that the colored plants in a $q$-simplex $\alpha = \langle v_{0}, \ldots, v_{q} \rangle \in\plantq$ are ordered by the tangential direction at $*$ of their respective leftmost arcs. For $i=0,\ldots,q$, the $i$-th \emph{face map} is given by leaving out the vertex $v_{i}$:
\begin{align*}
\partial_{i}\colon \plantq &\to \plantqi \\
\langle v_{0}, \ldots, v_{q} \rangle &\mapsto \langle v_{0}, \ldots, \hat{v_{i}}, \ldots v_{q}\rangle.
\end{align*}

We now determine \emph{face maps} in $\cplantq$ for all $q\geq 0$ which are compatible with the face maps in $\plant$ insofar as they give $\cplant$ the structure of a semi-simplicial set isomorphic to $\plant$. These maps are evidently given by $\Gamma^{-1}\circ \partial_{i} \circ \Gamma$. For later use, we need to describe them explicitly.

\begin{figure}
\centering
\captionsetup[subfigure]{labelformat=empty}
\begin{subfigure}[b]{0.32\linewidth}
\centering
\includegraphics[scale=0.2]{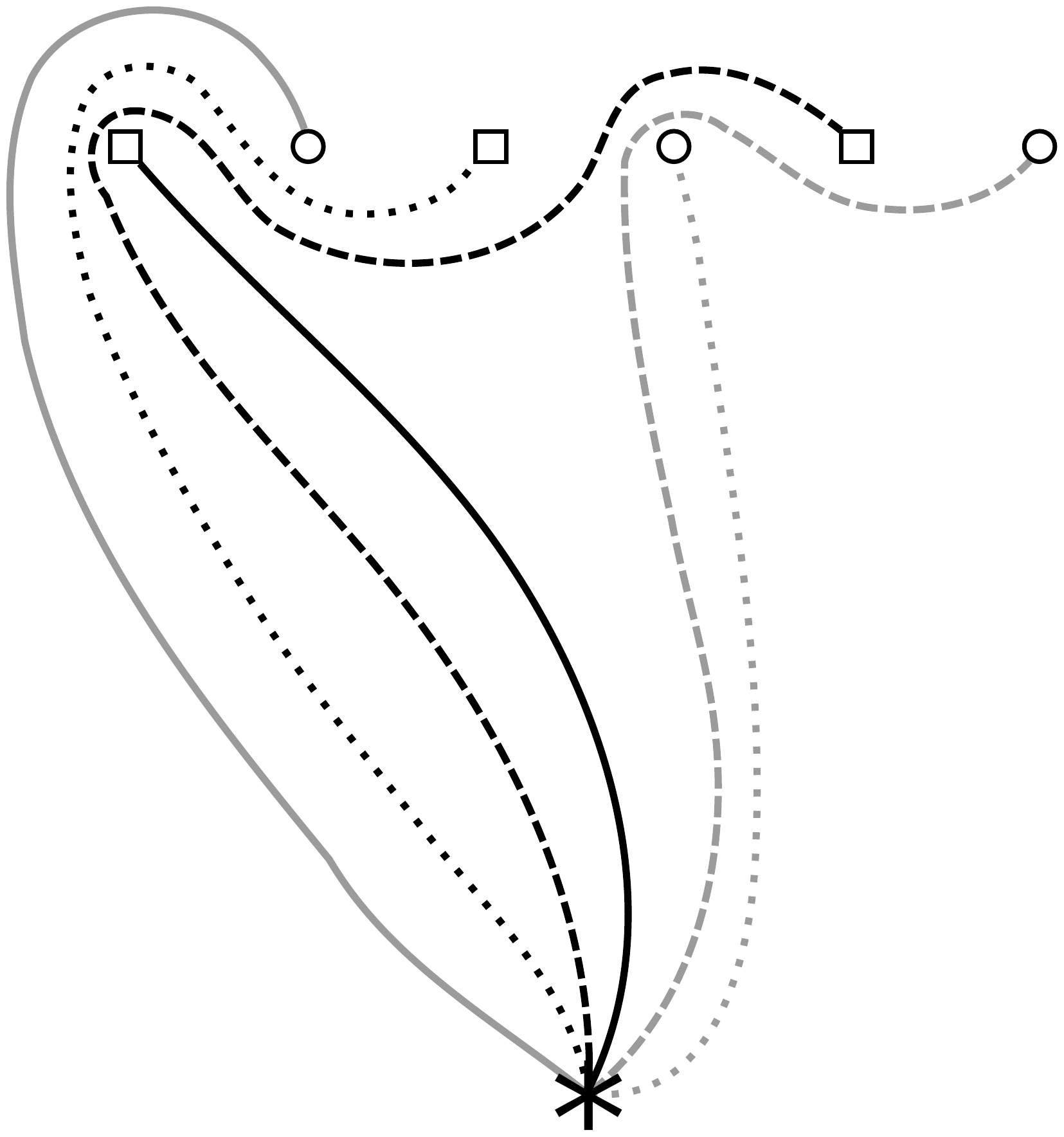}
\caption{$\alpha_{\omega}$}
\end{subfigure}
\begin{subfigure}[b]{0.32\linewidth}
\centering
\includegraphics[scale=0.2]{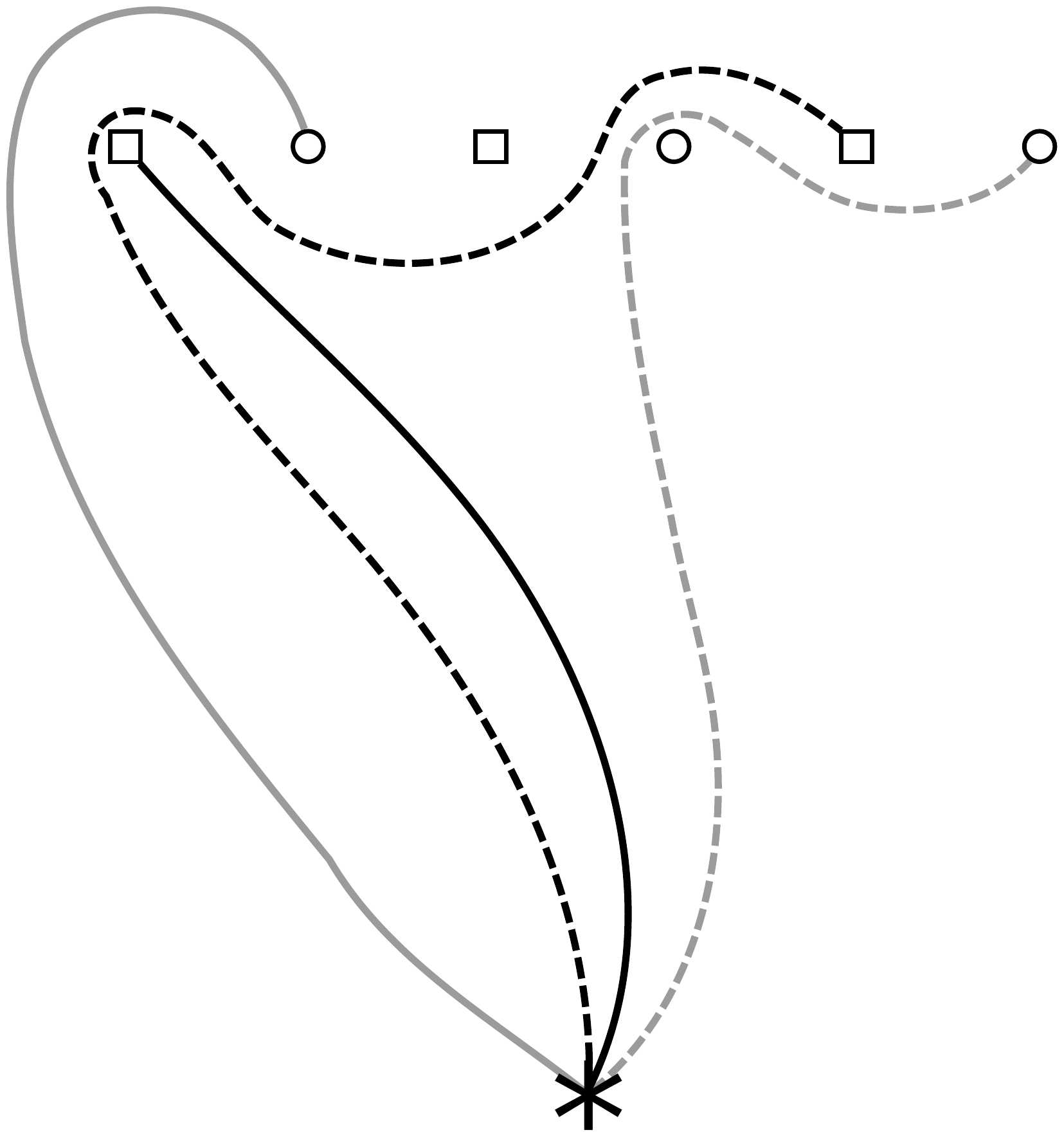}
\caption{$\partial_{1}\alpha_{\omega}$}
\end{subfigure}
\begin{subfigure}[b]{0.32\linewidth}
\centering
\includegraphics[scale=0.2]{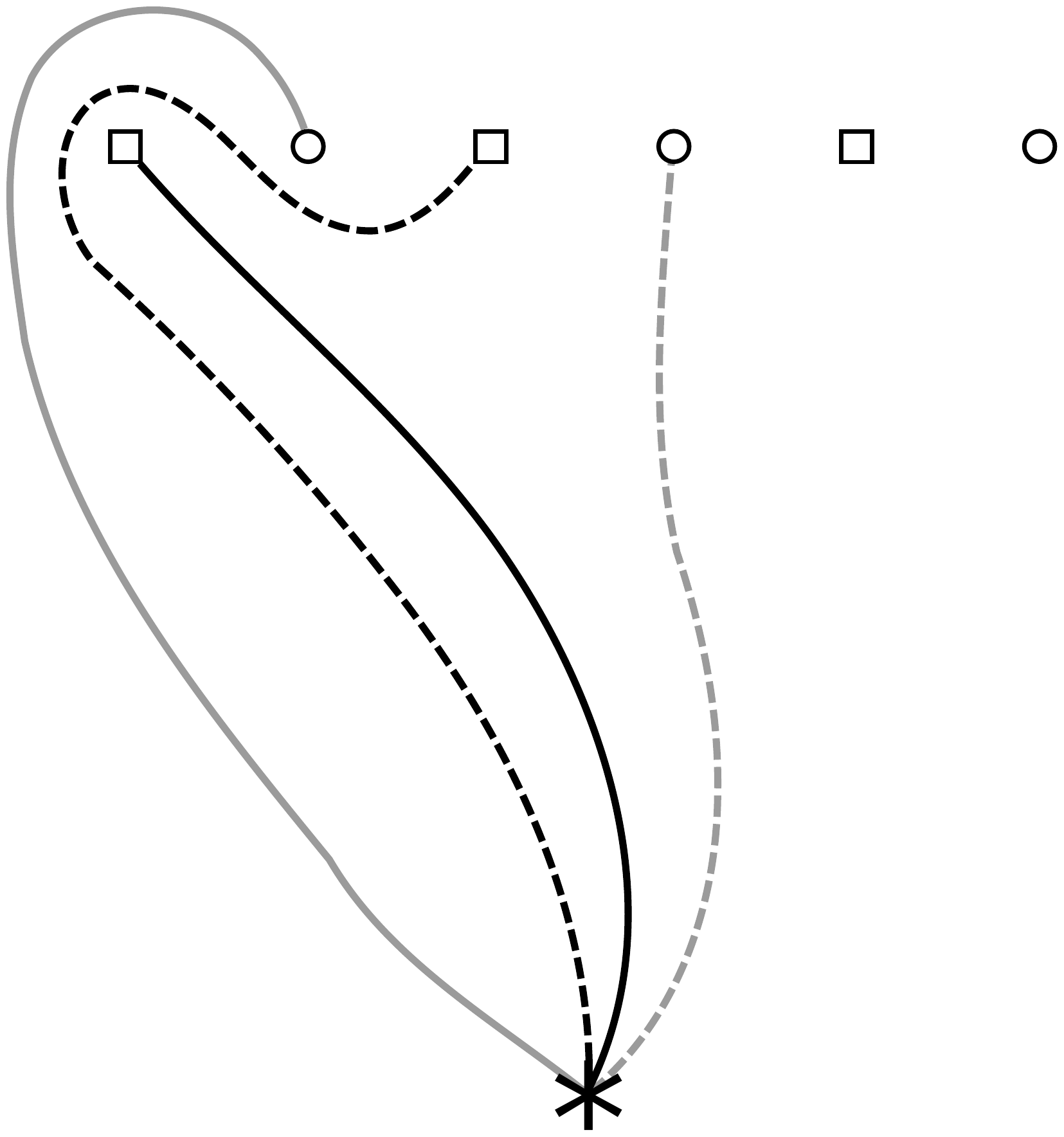}
\caption{$\alpha_{d_{1}\omega}$}
\end{subfigure}
\caption{The simplices $\alpha_{\omega}$, $\partial_{1}\alpha_{\omega}$, $\alpha_{d_{1}\omega}\in\mathcal{O}^{[3,(1,1)]}$ for $\omega$ from Example~\ref{ex-faces}.}
\label{fig-faces}
\end{figure}

Given a $q$-simplex with IC-sequence $\omega$, its $i$-th face map is given by removing the arcs with index $i$. Hence, we may define the $i$-th \emph{face} of $\omega$ as the IC-sequence $d_{i}\omega$ obtained by first removing all the pairs with index $i$ from $\omega$, and
secondly subtracting $1$ from the indices of the remaining elements with indices bigger than~$i$. The IC-sequence $d_{i} \omega$ defines a P-sequence which we denote by $d_{i} \tilde\omega$.

\begin{example}\label{ex-faces}	
Consider the IC-sequence $\omega = (0,2),(1,1),(2,1),(0,1),(2,2),(1,2)$ for a simplex in $\mathcal{O}^{[3,(1,1)]}$. The corresponding P-sequence is $\tilde\omega = (4,1,2,6,3,6)$, and the first faces of the sequences are given by
$d_{1}\omega = (0,2),(1,1),(0,1),(1,2)$ and $d_{1}\tilde\omega = (3,1,2,4)$. The inherent standard simplices are depicted in Figure~\ref{fig-faces}.
\end{example}

Our next goal is to find elements $\tau_{i,q}^{\omega}\in\Br_{n\cdot\XI}$ for all $i = 0, \ldots, q$, such that
\begin{enumerate}[(i)]
\item\label{cond11}
$
\partial_{i}\alpha_{\omega} = \tau_{i,q}^{\omega}\cdot\alpha_{d_{i}\omega},
$
and 
\item\label{cond21}
$\tau_{i,q}^{\omega}$ commutes with $L_{q-1}$.
\end{enumerate}
If we identify such elements, we are eventually able to define maps
\begin{align*}
\bd_{i}\colon \cplantq &\to \cplantqi \\
(\omega, \sigma L_{q}) &\mapsto (d_{i} \omega, \sigma\tau_{i,q}^{\omega}L_{q-1}),
\end{align*}
which are independent of the choice of a representative for the coset $\sigma L_{q}$ because of condition~(\ref{cond21}) and the fact $L_{q} \subset L_{q-1}$. Furthermore, by condition~(\ref{cond11}), such elements satisfy
\begin{align}
\bd_{i}(\omega, \sigma L_{q}) &= (d_{i} \omega, \sigma\tau_{i,q}^{\omega}L_{q-1})\nonumber \\
&= \Gamma^{-1}(\sigma\tau_{i,q}^{\omega}\cdot\alpha_{d_{i} \omega})\nonumber \\
&= \Gamma^{-1}(\sigma\cdot\partial_{i}\alpha_{\omega})\nonumber \\
&= (\Gamma^{-1}\circ\partial_{i})(\sigma\cdot \alpha_{\omega}) \label{geomfact}\\
&= (\Gamma^{-1}\circ\partial_{i}\circ\Gamma)(\omega, \sigma L_{q}) \nonumber,
\end{align}
as desired. Here, in~(\ref{geomfact}), we used the geometric fact that for any $\sigma\in\Br_{n\cdot\XI}$, we have $\sigma\cdot\partial_{i}\alpha_{\omega} = \partial_{i}(\sigma\cdot\alpha_{\omega})$: The plant deletion operator $\partial_{i}$ commutes with the action of the mapping class defined by $\sigma$.

The face of an IC-sequence of a simplex is defined as the IC-sequence of the corresponding face of a simplex. By Lemma~\ref{orbits}, the colored braid group $\Br_{n\cdot\XI}$ acts transitively on the set of simplices with the same IC-sequence. From these two facts, it is immediate that elements $\tau^{\omega}_{i,q}$ satisfying condition (\ref{cond11}) exist and that the coset $\tau^{\omega}_{i,q}L_{q-1}$ is unique. We are yet to determine them explicitly, and check whether they satisfy condition~(\ref{cond21}).

By the construction of standard simplices, the first $i$ vertices of $\partial_{i}\alpha_{\omega}$ and $\alpha_{d_{i}\omega}$ are identical.
Now, mapping $\alpha_{d_{i}\omega}$ to $\partial_{i}\alpha_{\omega}$ requires transferring the points of the $(q+1)$-th cluster to the $(i+1)$-th cluster in a suitable way: This transfer is performed for one point after the other, starting with the leftmost point. Let
$\widetilde\omega =  (p_{1}, \ldots, p_{(q+1)\cdot\xi})$
be the P-sequence of $\alpha_\omega$. If $p_{i\xi + m} < p_{r \xi + s}$ for some $m, s\in \{1, \ldots, \xi\}$ and  $q\geq r > i$, the $m$-th point of the $(q+1)$-th cluster has to be half-twisted around the endpoint of the arc which (in $\alpha_{d_{i}\omega}$) ends at the $s$-th point of the $r$-th cluster in a positive direction, and in a negative direction otherwise. A careful analysis of this procedure (where we take into account that the braid group acts from the left) yields the following formula:
\begin{equation}\label{tau}
\tau_{i,q}^{\omega} = \prod_{j=1}^{\xi}\left( \prod_{k=(i+1)\cdot\xi}^{(q+1)\cdot \xi - 1}
\left(\sigma_{k-j+1}\right)^{\sgn(p_{k+1} - p_{(i+1)\cdot\xi - j + 1})}\right)
\end{equation}
Here, $\sgn\colon \Z \to \{-1, 1\}$ is the signum function. Visibly, the largest index of a braid generator involved is $(q+1)\cdot \xi - 1$, so $\tau_{q,i}^{\omega}$ commutes with the elements of $L_{q}$, where the smallest index involved is $(q+1)\cdot\xi + 1$. Thus, condition~(\ref{cond21}) is satisfied.

An example for the stepwise construction of $\tau_{i,1}^{\omega}$ can be found in Figure~\ref{fig-tau}.

\begin{figure}
\centering
\captionsetup[subfigure]{labelformat=empty}
\begin{subfigure}[b]{0.28\linewidth}
\centering
\includegraphics[scale=0.15]{proc3.pdf}
\caption{$\alpha_{d_{1}\omega}$}
\end{subfigure}
\begin{minipage}[b][3cm][c]{0.04\linewidth}
$\overset{\sigma_{4}}{\longmapsto}$
\vspace{0.5cm}
\end{minipage}
\begin{subfigure}[b]{0.28\linewidth}
\centering
\includegraphics[scale=0.15]{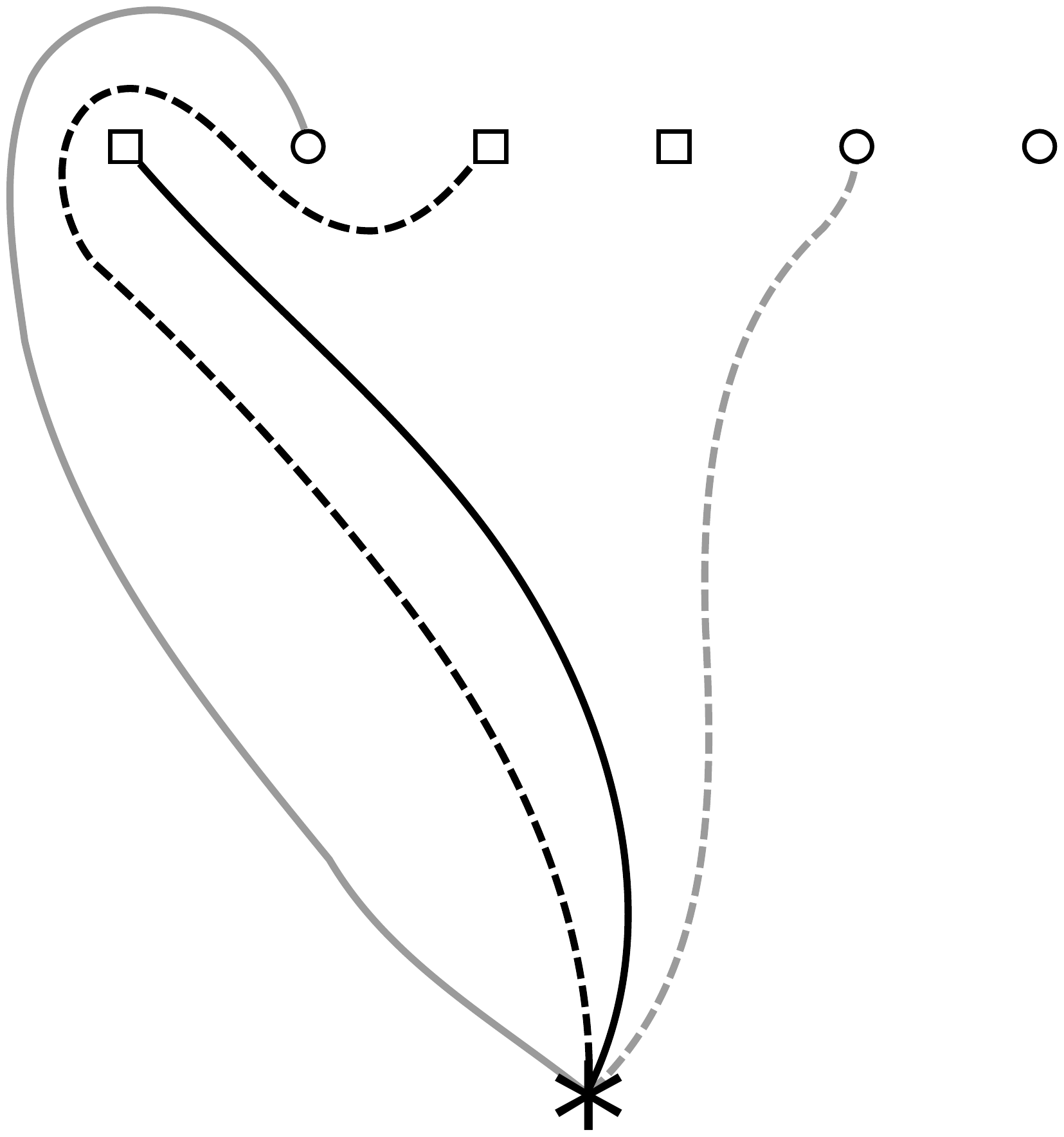}
\caption{$\;$}
\end{subfigure}
\begin{minipage}[b][3cm][c]{0.04\linewidth}
$\overset{\sigma_{3}}{\longmapsto}$
\vspace{0.5cm}
\end{minipage}
\begin{subfigure}[b]{0.28\linewidth}
\centering
\includegraphics[scale=0.15]{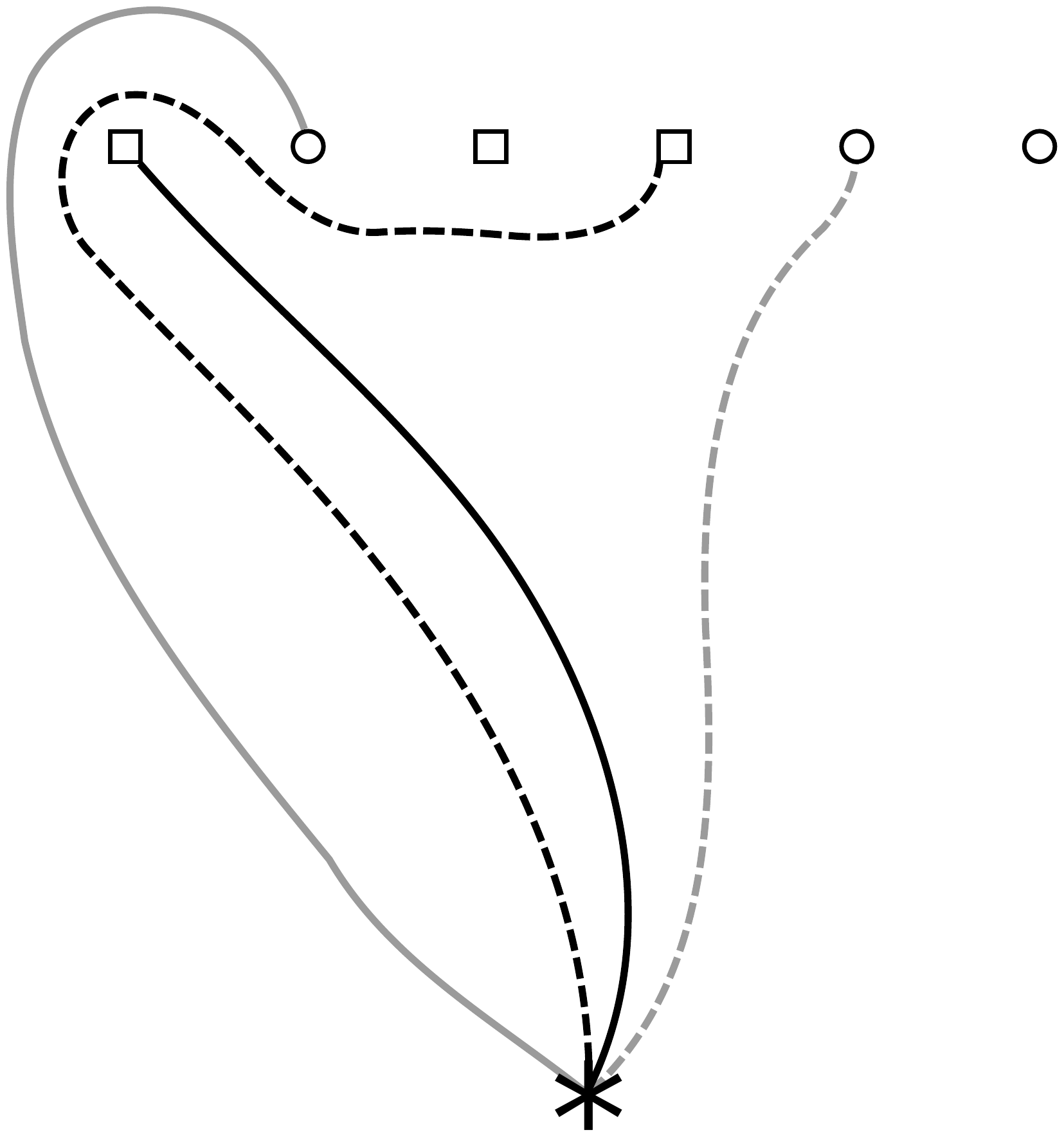}
\caption{$\;$}
\end{subfigure}

\vspace{4ex}

\begin{minipage}[b][3cm][c]{0.04\linewidth}
$\overset{\sigma_{5}^{-1}}{\longmapsto}$
\vspace{0.5cm}
\end{minipage}
\begin{subfigure}[b]{0.28\linewidth}
\centering
\includegraphics[scale=0.15]{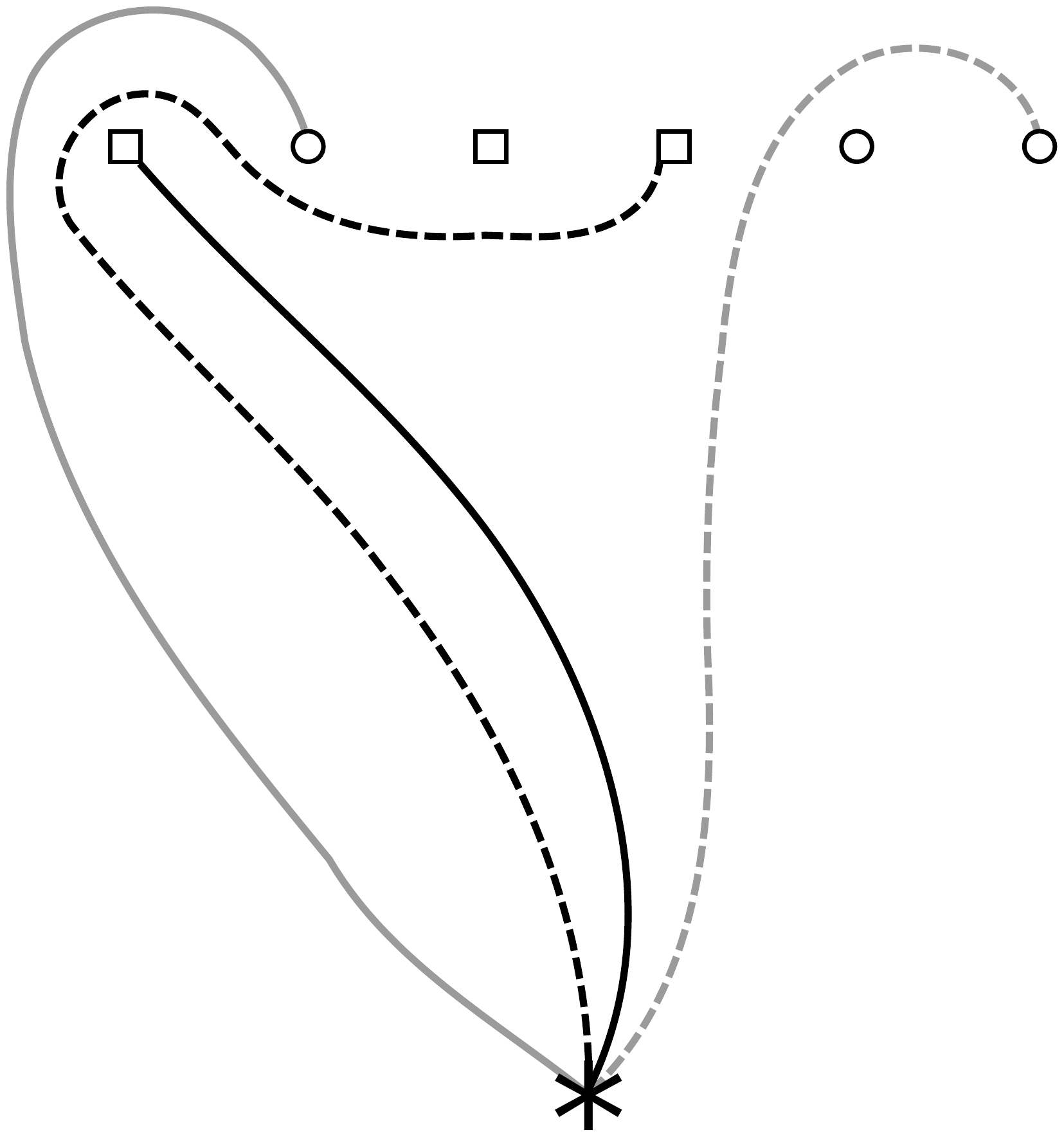}
\caption{$\;$}
\end{subfigure}
\begin{minipage}[b][3cm][c]{0.04\linewidth}
$\overset{\sigma_{4}^{-1}}{\longmapsto}$
\vspace{0.5cm}
\end{minipage}
\begin{subfigure}[b]{0.28\linewidth}
\centering
\includegraphics[scale=0.15]{proc2.pdf}
\caption{$\tau_{1,2}^{\omega}\alpha_{d_{1}\omega} = \partial_{1}\alpha_{\omega}$}
\end{subfigure}
\caption{Passing from $\alpha_{d_{1} \omega}$ to $\partial_{1}\alpha_{\omega}$ ($\omega$ from Example~\ref{ex-faces}).}
\label{fig-tau}
\end{figure}

\begin{remark}\label{pic-tau}
\begin{enumerate}[(i)]
\item
A priori, $\tau^{\omega}_{i,q}\in\Br_{n\cdot\XI}$ for some fixed $n>q$. We note that the definition in~(\ref{tau}) does not depend on the particular choice of $n$, so we can regard $\tau^{\omega}_{i,q}$ as a common element of all $\Br_{n\cdot\XI}$ for $n>q$, using the inclusions $\Br_{n\cdot\XI} \hookrightarrow \Br_{(n+1)\cdot\XI}$ given by attaching $\xi$ trivial strands with coloring $\XI$ to the right of a braid in $\Br_{n\cdot\XI}$.
\item
If $\tilde\omega$ is the P-sequence corresponding to the IC-sequence $\omega$, we sometimes also write $\tau_{i,q}^{\tilde\omega}$ to denote the element $\tau_{i,q}^{\omega}$.
\end{enumerate}
\end{remark}

We have just finished proving the following result:

\begin{proposition}\label{semisimpl-iso}
For $i=0,\ldots,q$, the maps
\begin{align*}
\bd_{i}\colon \cplantq &\to \cplantqi \\
(\omega, \beta L_{q}) &\mapsto (d_{i} \omega, \beta\tau_{i,q}^{\omega}L_{q-1}),
\end{align*}
give $\cplant$ the structure of a semi-simplicial set such that
$
\Gamma\colon \cplant \to \plant
$
is an isomorphism of semi-simplicial sets.
\end{proposition}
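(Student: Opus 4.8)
The plan is to reduce the statement to three verifications, exploiting the fact that most of the work has already been carried out in the preceding discussion: (1) that $\Gamma$ is a bijection in each simplicial degree; (2) that the maps $\bd_{i}$ are well defined; and (3) that $\Gamma$ intertwines $\bd_{i}$ with $\partial_{i}$. From these, the semi-simplicial identities for $\cplant$, and hence the assertion that $\Gamma$ is an isomorphism of semi-simplicial sets, will follow formally.

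For (1), I would invoke Lemma~\ref{orbits}, which identifies the $\Br_{n\cdot\XI}$-orbits on $\plantq$ with the IC-sequences of $q$-simplices, together with Lemma~\ref{stabilizers}, which shows that the stabilizer of the standard simplex $\alpha_{\omega}$ equals $L_{q}\cong\Br_{(n-q-1)\cdot\XI}$; orbit--stabilizer then makes each $\Gamma_{\omega}\colon\Br_{n\cdot\XI}/L_{q}\to\Br_{n\cdot\XI}\cdot\alpha_{\omega}$ a bijection, and assembling these over all IC-sequences yields a bijection $\Gamma\colon\cplantq\to\plantq$ for every $q=0,\ldots,n-1$. For (2), I would first produce elements $\tau_{i,q}^{\omega}$ satisfying~(\ref{cond11}): since $\partial_{i}\alpha_{\omega}$ and $\alpha_{d_{i}\omega}$ carry the same IC-sequence $d_{i}\omega$, the transitivity part of Lemma~\ref{orbits} supplies a braid carrying one to the other, and because $\alpha_{d_{i}\omega}$ has stabilizer $L_{q-1}$ the coset $\tau_{i,q}^{\omega}L_{q-1}$ is unique. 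I would then check that the explicit braid~(\ref{tau})---obtained by transferring the endpoints of the last cluster to the $(i+1)$-th cluster one point at a time---really does satisfy~(\ref{cond11}); this is the combinatorial heart of the argument, and the one step that genuinely requires care, since the sign of each half-twist must be read off from the relative order of the corresponding entries of the P-sequence $\widetilde\omega$. Condition~(\ref{cond21}) is then immediate from the form of~(\ref{tau}): the braid generators appearing there have index at most $(q+1)\xi-1$, whereas $L_{q}$ is generated inside $\Br_{n\cdot\XI}$ by the $\sigma_{k}$ with $k\geq(q+1)\xi+1$, so $\tau_{i,q}^{\omega}$ commutes with $L_{q}$; combined with $L_{q}\subset L_{q-1}$, this makes the assignment $(\omega,\beta L_{q})\mapsto(d_{i}\omega,\beta\tau_{i,q}^{\omega}L_{q-1})$ independent of the chosen representative $\beta$, so $\bd_{i}$ is well defined.

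For (3), I would use the chain of equalities in~(\ref{geomfact})---which relies only on~(\ref{cond11}), the definition of $\Gamma$, and the geometric fact that plant deletion commutes with the mapping-class action, $\sigma\cdot\partial_{i}\alpha_{\omega}=\partial_{i}(\sigma\cdot\alpha_{\omega})$---to conclude $\bd_{i}=\Gamma^{-1}\circ\partial_{i}\circ\Gamma$ in each degree. Since $\plant$ is an ordered simplicial complex and the $\partial_{i}$ are its standard face operators, the identities $\partial_{i}\partial_{j}=\partial_{j-1}\partial_{i}$ for $i<j$ hold; conjugating them by the degreewise bijection $\Gamma$ gives the same identities for the $\bd_{i}$, so $\cplant$ is a semi-simplicial set, and $\Gamma$, being a degreewise bijection with $\Gamma\circ\bd_{i}=\partial_{i}\circ\Gamma$, is an isomorphism of semi-simplicial sets. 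I expect the main obstacle to be the verification that~(\ref{tau}) realizes the face operator $\partial_{i}$ on standard simplices; everything else is bookkeeping resting on Lemmas~\ref{orbits} and~\ref{stabilizers}.
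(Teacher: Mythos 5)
Your proposal is correct and follows essentially the same route as the paper: the bijection $\Gamma$ from Lemmas~\ref{orbits} and~\ref{stabilizers}, the existence and uniqueness (mod $L_{q-1}$) of $\tau_{i,q}^{\omega}$ from transitivity on a fixed IC-sequence, the explicit formula~(\ref{tau}) verifying~(\ref{cond11}), commutation with $L_q$ from the index bounds to establish well-definedness, and the chain~(\ref{geomfact}) showing $\bd_i = \Gamma^{-1}\circ\partial_i\circ\Gamma$, from which the semi-simplicial identities for $\cplant$ are inherited. The paper's "proof" of the proposition is precisely this development, culminating in the line "We have just finished proving the following result," so your reconstruction matches it point for point.
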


\begin{remark}\label{comb-action}
As there is a $\Br_{n\cdot\XI}$-action on $\plant$, there is also a $\Br_{n\cdot\XI}$-action on $\cplant$: A braid $\tau\in\Br_{n\cdot\XI}$ acts via 
$
\tau\cdot(\omega_{p}, \sigma L_{p}) = (\omega_{p}, \tau\sigma L_{p})
$.
\end{remark}


\section{Hurwitz spaces}\label{hurwitz-spaces}

Let $G$ be a finite group. Following the path pursued in \cite{0912.0325}, we consider Hurwitz spaces of branched $G$-covers of a closed disk $D$. These spaces are relevant to arithmetic applications, cf.~\cite{MR1119950} and \cite{MR2316356}.

Let $n\in\N$, and $*$ a marked point in the boundary of $D$.
We consider (not necessarily connected) branched covers of $D$ described by the following data:
\begin{itemize}
\item[--] a branch locus $B \in\Conf_{n}$,
\item[--] an unbranched covering space map $p\colon Y \to D \setminus B$,
\item[--] a marked point $\bullet$ in the fiber of $p$ above $*$, and
\item[--] a group homomorphism $\alpha\colon G \to \Aut(p)$ which induces a free and transitive action of $G$ on any fiber of $p$.
\end{itemize}
An isomorphism of two such covers is a homeomorphism of the total spaces of the coverings which is compatible with the remaining data. By virtue of the Riemann existence theorem, the isomorphism classes can be parametrized by the following data:

\begin{definition}
A \emph{marked $n$-branched $G$-cover of $D$} is defined as a pair $(B, \mu)$, where $B\in\Conf_{n}$ is a configuration of $n$ branch points, and $\mu\colon\pi_{1}(D\setminus B, *) \to G$ is a homomorphism.
\end{definition}

\begin{remark}
We call such covers \emph{marked} since we do not consider the monodromy homomorphisms $\mu\colon\pi_{1}(D\setminus B, *) \to G$ up to conjugacy in the target. This amounts to marking the point $\bullet$ in the fiber of a branched cover above $*$.
\end{remark}

The space of marked $n$-branched $G$-covers must be a covering space of $\Conf_{n}$ with fiber $\Hom(\pi_{1}(D\setminus B, *), G) \cong G^{n}$. The elements of $G^{n}$ are called \emph{Hurwitz vectors}. Such vectors are unique up to the choice of a basis for $\pi_{1}(D\setminus B, *) \cong F_{n}$ which consists of loops around the single points of $B$, i.e., up to the action of $\Map(D\setminus B) \cong \Br_{n}$ on $G^{n}$, given by 
\begin{equation}\label{hurwitz-action}
\sigma_{i} \cdot \underline g = (g_{1}, \ldots, g_{i-1}, g_{i}g_{i+1}g_{i}^{-1}, g_{i}, g_{i+2}, \ldots, g_{n})
\end{equation}
for $i = 1, \ldots, n-1$, cf.~\cite{MR1509816}. The left monodromy action of $\pi_{1}(\Conf_{n}) \cong \Br_{n}$ on $G^{n}$ can be identified with the \emph{Hurwitz action} (\ref{hurwitz-action}) as well. Replacing $\Conf_{n}$ with the classifying space $\BBr_{n}$ for convenience, we obtain:

\begin{definition}\label{def-hurwitz}
The \emph{Hurwitz space for marked $n$-branched $G$-covers of the disk} is defined as the Borel construction
$$
\Hur_{G,n} = \EBr_n \times_{\Br_{n}} G^n,
$$
where $\Br_n$ acts on $G^{n}$ via the Hurwitz action~(\ref{hurwitz-action}).
\end{definition}

\subsection*{Combinatorial invariants}
The connected components of $\Hur_{G,n}$ are indexed by the set of $\Br_{n}$-orbits in $G^{n}$.
Below, we list some $\Br_{n}$-invariant functions on Hurwitz vectors in $G^{n}$. Such invariants must be constant on connected components of $\Hur_{G,n}$.

Let $(B, \mu)$ be a fixed marked $n$-branched $G$-cover of $D$ with Hurwitz vector $\underline g = (g_{1}, \ldots, g_{n})\in G^{n}$.

\begin{itemize}
\item[--] The \emph{global monodromy} of $(B, \mu)$ is the subgroup of $G$ generated by $g_{1}, \ldots, g_{n}$. It is equal to $G$ if and only if the corresponding branched cover is connected. In this case, we say that the branched cover has \emph{full monodromy}.
\item[--] The \emph{boundary monodromy} of $(B, \mu)$ is defined as the product $\partial\underline g = \prod_{i=1}^{n} g_{i}$. Its inverse describes the branching behavior \emph{at infinity}.
\item[--] Given distinct conjugacy classes $c_{1}, \ldots, c_{t}$ such that all entries of $\underline g$ lie in some $c_{i}$, the \emph{shape (vector)} of $(B, \mu)$ is the $t$-tuple $(n_{c_{1}}(B,\mu), \ldots, n_{c_{t}}(B, \mu))$, where $n_{c_{i}}(B, \mu)$ is the number of elements of $\underline g$ that lie in~$c_{i}$.
\end{itemize}

Considering possible shapes for a nontrivial group $G$, we obtain a lower bound $b_{0}(\Hur_{G,n}) \geq n+1$ for the zeroth Betti number. It makes thus sense to consider sequences of subspaces of Hurwitz spaces which do not \emph{a priori} exclude the possibility of the existence of a homological stability theorem.

From now on, let $c = (c_{1}, \ldots, c_{t})$ be a tuple of $t$ distinct nontrivial conjugacy classes in~$G$.
For a shape vector $\XI = (\xi_{1}, \ldots, \xi_{t})\in\N^{t}$ of length $\xi = \sum_{i=1}^{t} \xi_{i}$, we consider the subspace $\Hur^{c}_{G,\XI}$ of $\Hur_{G,\xi}$ which parametrizes covers with shape $\XI$, which is a union of connected components of $\Hur_{G, \xi}$

The space $\Hur^{c}_{G,\XI}$ is a cover of $\BBr_{\xi} \cong \Conf_{\xi}$ with the tuples in $G^\xi$ for which $\xi_{i}$ entries lie in $c_{i}$ as a fiber. This cover factors over $\BBr_{\XI} \cong \Conf_{\XI}$, which is the classifying space of the colored braid group $\Br_{\XI}$. The fiber of the unbranched cover $\Hur^{c}_{G,\XI} \to \BBr_{\XI}$ can be identified with
$$
\cc = c_{1}^{\xi_{1}} \times \ldots \times c_{t}^{\xi_{t}} ,
$$
on which $\Br_{\XI}$ acts via the Hurwitz action. Generalizing to covers with shape vector $n\cdot\XI= (n\xi_{1}, \ldots, n\xi_{t})$, we may write
 $$\Hur_{G,n\cdot\XI}^{c} = \EBr_{n\cdot\XI} \times_{\Br_{n\cdot\XI}} \cc^{n},$$
where we identify $\Br_{n\cdot\XI}$ with the set stabilizer of $\cc^{n}$ under the $\Br_{n\xi}$-action on $G^{n\xi}$.

\subsection*{Structure on Hurwitz spaces}

We will now identify more structure Hurwitz spaces in order to support the homological investigations to follow. Before starting, it is good to know that we may work in the category of CW complexes: Hurwitz spaces and thus all of their components are homotopy equivalent to finite CW complexes by~\cite[Prop.~2.5]{0912.0325}.

For $m, n \in \N_{0}$, we obtain continuous maps
$$
\Hur_{G,m\cdot\XI}^{c} \times \Hur_{G,n\cdot\XI}^{c} \to \Hur_{G, (m+n)\cdot\XI}^{c}
$$
from the inclusions $\Br_{m\cdot\XI} \times \Br_{n\cdot\XI} \to \Br_{(m+n)\cdot\XI}$ and $\cc^{m} \times \cc^{n} \to \cc^{m+n}$ which are defined and associative up to homotopy.
These maps give $\bigsqcup_{n\geq 0} \Hur_{G,n\cdot\XI}^{c}$ the structure of a disconnected $H$-space with homotopy identity $\Hur_{G,0\cdot\XI}^{c}$.

Let $A$ be a commutative ring. The $H$-space structure on the union of Hurwitz spaces induces a graded (grading in the $n$-variable) ring structure on the direct sum of the zeroth homologies:

\begin{definition}\label{def-ring}
The $A$-module
$$
R^{A,c}_{G,\XI} = \bigoplus_{n\geq 0} H_{0}(\Hur_{G,n\cdot \XI}^{c}; A)
$$
is called the \emph{ring of connected components (with coefficient ring $A$)} for the sequence $\{\Hur_{G,n\cdot \XI}^{c} \mid n\geq 0\}$ of Hurwitz spaces. If $G$, $\XI$, $A$, and $c$ are clear from the context, we simply denote the ring by $R$.
\end{definition}

\begin{remark}\label{comb-descr}\label{deg-one}
There is a nice combinatorial description of the ring $R$, cf.~\cite{MR1119950}:
Let $\mathfrak{s} = \bigsqcup_{n\geq 0} \cc^{n}/\Br_{n\cdot\XI}$. Concatenation of Hurwitz vectors gives $\mathfrak{s}$ the structure of a monoid with the empty tuple as the identity. Then, $R$ is the monoid algebra $A[\mathfrak{s}]$.

In particular, $R$ is finitely generated as an $A$-algebra: Its degree one part is generated as an $A$-module by elements $r(g)$ with $g\in\cc/\Br_{\XI}$. Any element of $\cc^{n}$ is the concatenation of $n$ elements in $\cc$, and this concatenation descends to a map $(\cc / \Br_{\XI})^{n} \to \cc^{n} / \Br_{n\cdot\XI}$ by virtue of the natural inclusion $(\Br_{\XI})^{n} \to \Br_{n\cdot\XI}$. Therefore, the degree one elements $r(g)$ generate $R$ as an $A$-algebra.
\end{remark}

The direct sum of the $p$-th homology modules of Hurwitz spaces obtains the structure of a graded $R$-module from the $H$-space structure in connection with the K\"unneth formula: The graded $A$-module (grading in the $n$-variable)
$$
M_{G, \XI,p}^{A,c} = \bigoplus_{n \geq 0} H_{p}(\Hur_{G,n\cdot\XI}^{c};A),
$$
has the structure of a graded $R^{A,c}_{G,\XI}$-module.
If no misunderstandings are possible, we denote it by $M_{p}$. Clearly, we have $M_{0} = R$.


\section{Homological stability for Hurwitz spaces}\label{homstabhurwitz}

Let $G$ be a finite group, $c = (c_{1}, \ldots, c_{t})$ a tuple of distinct conjugacy classes in $G$, $\XI\in\N^{t}$, and $\cc = c_{1}^{\xi_{1}}\times\ldots\times c_{t}^{\xi_{t}}$. Let moreover $A$ be a commutative ring. In what follows, we use the notion of the \emph{ring of connected components} introduced in Definition~\ref{def-ring}. We usually write $R$ instead of $R^{A,c}_{G,\XI}$.
For a central element $U\in R$ and $R[U]$ the $U$-torsion in $R$, we use the notation $D_{R}(U) = \max\{\deg (R/UR), \deg (R[U])\}$. We work with colored plant complexes of the form
$
\plant = \mathcal{O}_{n\cdot\XI}^{\XI}(D) \cong \cplant.
$

We study the homology of the spaces
$
\Hur_{G, n\cdot \XI}^{c} = \EBr_{n\cdot{\XI}} \times_{\Br_{n\cdot\XI}} \cc ^{n}
$
, for $n\geq 0$.

\subsection*{The purely abelian case}\label{abelian}
At first, we consider the case of a central homogeneous element $U\in R$ such that $D_{R}(U) = 0$. In this case, $U$ is necessarily of degree one and induces an isomorphism $R_{i} \cong R_{i+1}$ in any degree $i\geq 0$, so $R \cong A[x]$ must hold. Hence, there is only one $\Br_{\XI}$-orbit in $\cc$.
This necessarily implies that any conjugacy class in $c$ is a singleton, since otherwise there would be multiple boundary monodromies and thus multiple $\Br_{\XI}$-orbits in $\cc$. In other words, the monodromy $\mu\colon \pi_{1}(D\setminus B) \to G$ of the covers in $\Hur_{G, n\cdot\XI}^{c}$ is a subgroup of the center of $G$. We call such covers \emph{purely abelian}.

Vice versa, if all covers in $\Hur_{G, n\cdot\XI}^{c}$ are purely abelian, any conjugacy class in $c$ is a singleton and thus the single element of $\cc$ defines an element $U\in R$ such that $D_{R}(U) = 0$. We see that we have
\begin{equation}\label{hurwitz-conf}
\Hur_{G, n\cdot\XI}^{c} = \EBr_{n\cdot\XI} \times_{\Br_{n\cdot\XI}} \cc^{n} = \BBr_{n\cdot\XI}
\cong \Conf_{n\cdot\XI}.
\end{equation}
By (\ref{tran}), for all $p\geq 0$, we have
$
H_{p}(\Conf_{n\cdot\XI};\Z) \cong H_{p}(\Conf_{(n+1)\cdot\XI};\Z)
$
with stable range $n\geq \frac{2p}{\min\underline\xi}$. As a result, we obtain:

\begin{corollary}\label{abelian-case}
If there exists a central homogeneous element $U\in R_{>0}$ such that $D_{R}(U) = 0$ (equivalently, if $\Hur_{G,n\cdot\XI}^{c}$ parametrizes purely Abelian covers), there is an isomorphism
$ H_{p}(\Hur^{c}_{G,n\cdot\XI}; \Z) \cong H_{p}(\Hur^{c}_{G,(n+1)\cdot\XI}; \Z)
$ for $n\geq \frac{2p}{\min\XI}$.
\end{corollary}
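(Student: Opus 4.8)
The plan is to reduce the statement to the stability isomorphism~(\ref{tran}) for configuration spaces, via the identification~(\ref{hurwitz-conf}); the genuine work lies in showing that the hypothesis $D_{R}(U)=0$ forces the covers to be purely abelian, which is exactly the discussion preceding the corollary turned into a proof.

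First I would unwind $D_{R}(U)=0$. This says that $R/UR$ and the $U$-torsion $R[U]$ both vanish in positive degrees. Since $R_{0}=A$ and, by Remark~\ref{comb-descr}, $R_{1}$ is free over $A$ on the nonempty set $\cc/\Br_{\XI}$, vanishing of $(R/UR)_{1}$ already forces $\deg U = 1$ (otherwise $(R/UR)_{1}=R_{1}\neq 0$); multiplication by~$U$ then gives isomorphisms $R_{i}\overset{\sim}{\to}R_{i+1}$ for all $i\geq 0$, so $R\cong A[x]$ and in particular $|\cc/\Br_{\XI}|=1$. Now the boundary monodromy $\underline g\mapsto\prod_{i}g_{i}$ is a $\Br$-invariant on Hurwitz vectors, so a single orbit in $\cc$ is possible only if every $c_{i}$ is a singleton: were some $c_{i}$ to contain two distinct elements, combining them with fixed representatives of the other classes would produce two vectors in $\cc$ with distinct boundary monodromies, hence in distinct $\Br_{\XI}$-orbits. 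Thus each $c_{i}=\{g_{i}\}$, the set $\cc$ is a single point, the image of the monodromy is central in $G$, and the covers are purely abelian; the converse implication in the stated equivalence is the trivial observation that when $\cc$ is a point its unique element is a central degree-one $U$ with $R=A[U]$, so $D_{R}(U)=0$.

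With $\cc^{n}$ a point carrying the trivial $\Br_{n\cdot\XI}$-action, the Borel construction collapses to $\Hur^{c}_{G,n\cdot\XI}=\EBr_{n\cdot\XI}\times_{\Br_{n\cdot\XI}}\cc^{n}=\BBr_{n\cdot\XI}\cong\Conf_{n\cdot\XI}$, which is precisely~(\ref{hurwitz-conf}). The desired isomorphism $H_{p}(\Hur^{c}_{G,n\cdot\XI};\Z)\cong H_{p}(\Hur^{c}_{G,(n+1)\cdot\XI};\Z)$ for $n\geq\frac{2p}{\min\XI}$ is then exactly~(\ref{tran}).

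The only nontrivial ingredient is~(\ref{tran}) itself, imported from~\cite{1312.6327}; within this excerpt nothing further needs proof. The step deserving the most care — and the nearest thing to an obstacle — is the implication ``$R\cong A[x]\Rightarrow$ each $c_{i}$ is a singleton'': one must phrase the boundary-monodromy argument so that it genuinely precludes any larger set of orbits rather than merely counting naively, and confirm that the colouring of $\cc$ contributes no hidden extra orbits. Everything else is bookkeeping.
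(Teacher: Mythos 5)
Your proposal is correct and follows essentially the same route as the paper: deduce $\deg U=1$ and $R\cong A[x]$ from $D_{R}(U)=0$, use the $\Br_{\XI}$-invariance of the boundary monodromy to conclude each $c_{i}$ is a singleton, identify $\Hur^{c}_{G,n\cdot\XI}$ with $\Conf_{n\cdot\XI}$ via~(\ref{hurwitz-conf}), and invoke~(\ref{tran}). The only difference is that you spell out the degree-one and single-orbit steps in slightly more detail than the paper's terse "In this case, $U$ is necessarily of degree one..."; the substance is identical.
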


\subsection*{The spectral sequence}\label{spectral-sequence}

The space $\EBr_{n\cdot\XI} \times_{\Br_{n\cdot\XI}}(\plant \times \cc^{n})$
inherits a semi-simplicial structure from the face maps $\partial_{i}$ on $\plant$, where the left action of $\Br_{n\cdot\XI}$ on the product $\plant \times \cc^{n}$ is the diagonal action. The spectral sequence associated to the semi-simplicial space,
\begin{align}\label{the-sequence}
\begin{split}
E^{1}_{qp} &= H_{p}(\EBr_{n\cdot\XI} \times_{\Br_{n\cdot\XI}}(\plantq \times \cc^{n});A)  \\
&\Longrightarrow H_{p+q}(\EBr_{n\cdot\XI} \times_{\Br_{n\cdot\XI}}(\rplant \times \cc^{n});A),
\end{split}
\end{align}
converges to the homology of the realization of the total complex. By Theorem~\ref{delta-conn}(\ref{colored-conn}), the space $\rplant$ is $( \left\lfloor\frac{n}{2}  \right\rfloor - 2)$-connected. Thus, the target of the spectral sequence (\ref{the-sequence}) is isomorphic to $H_{p+q}(\EBr_{n\cdot\XI} \times_{\Br_{n\cdot\XI}} \cc^{n};A) \cong H_{p+q}(\Hur_{G,n\cdot\XI}^{c}; A)$ in degrees $p+q \leq \left\lfloor\frac{n}{2}  \right\rfloor - 2$.

Now, for $q < n$, we identify each of the 
$
l_{q}^{\XI}
$ 
$\Br_{n\cdot\XI}$-orbits in $\plantq$ with a copy of the quotient $\Br_{n\cdot\XI}/L_{q}$, cf.~Lemmas~\ref{orbits} and~\ref{stabilizers}. The subgroup $L_{q}\cong \Br_{(n-q-1)\cdot\XI}$ acts on the last $(n-q-1)\cdot\xi$ entries of $\cc^{n}$. Consequently, we obtain
\begin{align}
\label{spseq-iso}
\begin{split}
E^{1}_{qp} 
&\cong H_{p}(\{1, \ldots, l_{q}^{\XI}\} \times \cc^{q+1} \times (\EBr_{n\cdot\XI} \times_{L_{q}} \cc^{n-q-1} );A ) \\ 
&\cong A^{l_{q}^{\XI}} \otimes_{A} A\langle \cc^{q+1}\rangle \otimes_{A} H_{p}(\Hur_{(n-q-1)\cdot\XI}^{c};A).
\end{split}
\end{align}

The differentials $\dd$ on the $E^{1}$-page are induced by the alternating sum of the face maps on the semi-simplicial space. In the following, we aim to find an explicit identification of the differentials under the isomorphism (\ref{spseq-iso}).

Let $\underline g \in \cc^{n}\subset G^{n\xi}$ for some $n\in\N$.
We write $(\underline g)^{\leq j}\in\cc^{j}$ for the tuple consisting of the first $j\xi$ entries of $\underline g$, and $(\underline g)^{> j}\in\cc^{n-j}$ for the complementary $(n-j)\xi$-tuple.
By $(\underline g)_{j}\in\cc$, we denote the $j$-th $\xi$-tuple in $\underline g$.

\begin{lemma}\label{spseq-translation}
Let $q<n$.
Under the isomorphism~(\ref{spseq-iso}),
$\dd \colon E^{1}_{qp} \to E^{1}_{q-1,p} $
is represented by the linear map
\begin{align*}
\dd = \sum_{i=0}^{q} (-1)^{i}{{{{\partial_{i}}_{*}}}} \colon  &A^{l_{q}^{\XI}} \otimes_{A} A\langle \cc^{q+1}\rangle \otimes_{A} H_{p}(\Hur_{(n-q-1)\cdot\XI}^{c};A) \\
&\to A^{l_{q-1}^{\XI}} \otimes_{A} A\langle \cc^{q}\rangle \otimes_{A} H_{p}(\Hur_{(n-q)\cdot\XI}^{c};A),
\end{align*}
the ${{{\partial_{i}}_{*}}}$, for $i=0,\ldots, q$, being given by linear extension of 
$$
{{{\partial_{i}}_{*}}}(\omega\otimes \underline h\otimes x) = d_{i}\omega \otimes ((\tau_{i,q}^{\omega})^{-1}\cdot\underline h)^{\leq q}\otimes r(((\tau_{i,q}^{\omega})^{-1}\cdot\underline h)_{q+1})\cdot x,
$$
where $\omega$ is the IC-sequence of a $q$-simplex, $\underline h\in\cc^{q+1}$, and $x\in H_{p}(\Hur_{G,(n-q-1)\cdot\XI}^{c};A)$.
\end{lemma}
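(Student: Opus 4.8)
The plan is to trace through the chain of identifications that produced (\ref{spseq-iso}) and check that the semi-simplicial face map $\partial_i$ on $\EBr_{n\cdot\XI}\times_{\Br_{n\cdot\XI}}(\plantq\times\cc^n)$ translates into exactly the stated formula. First I would recall that, by Proposition~\ref{semisimpl-iso}, the semi-simplicial set $\plant$ is isomorphic to $\cplant$ via $\Gamma$, so on the level of $q$-simplices we may replace $\plantq$ by $\bigsqcup_{l_q^{\XI}} \Br_{n\cdot\XI}/L_q$, and the face map becomes $\bd_i(\omega,\sigma L_q)=(d_i\omega,\sigma\tau^{\omega}_{i,q}L_{q-1})$. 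Accordingly, the $E^1$-term is the homology of the Borel construction applied to $\bigsqcup_{l_q^{\XI}}\bigl(\Br_{n\cdot\XI}/L_q\bigr)\times\cc^n$ with the diagonal $\Br_{n\cdot\XI}$-action. Since the $\Br_{n\cdot\XI}$-action on a single orbit $\Br_{n\cdot\XI}/L_q$ is transitive with stabilizer $L_q$, Shapiro's lemma (in its geometric form: $\EG\times_G(G/H\times X)\simeq \EH\times_H X$) gives $\EBr_{n\cdot\XI}\times_{\Br_{n\cdot\XI}}(\Br_{n\cdot\XI}/L_q\times\cc^n)\simeq \EL_q\times_{L_q}\cc^n$. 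Decomposing $\cc^n=\cc^{q+1}\times\cc^{n-q-1}$ with $L_q\cong\Br_{(n-q-1)\cdot\XI}$ acting trivially on the first factor and via the Hurwitz action on the second (this is the placement convention fixed before Proposition~\ref{semisimpl-iso}, where $L_q=\langle \sigma_k\mid (q+1)\xi+1\le k\le n\xi-1\rangle\cap\Br_{n\cdot\XI}$) yields the isomorphism (\ref{spseq-iso}), with the $A\langle\cc^{q+1}\rangle$ factor tracking which coset representative times which $\cc^{q+1}$-tuple one has chosen.

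The main work is then to identify the standard generator of $E^1_{qp}$ attached to data $(\omega,\underline h,x)$ — where $\omega$ picks the orbit, $\underline h\in\cc^{q+1}$ records the ``front'' Hurwitz coordinates and $x\in H_p(\Hur^c_{G,(n-q-1)\cdot\XI};A)$ the ``back'' homology class — and push it through $\partial_i$. Concretely, such a generator corresponds to a point of the orbit $\Br_{n\cdot\XI}\cdot\alpha_\omega$, namely $\sigma\cdot\alpha_\omega$ for suitable $\sigma$, together with a Hurwitz vector whose first $(q+1)\xi$ entries are $\underline h$. Under $\bd_i$ this goes to $(d_i\omega,\sigma\tau^{\omega}_{i,q}L_{q-1})$; but now the natural splitting of $\cc^{n-q}$ used for $E^1_{q-1,p}$ is the one \emph{after} multiplying by $\tau^{\omega}_{i,q}$, so the Hurwitz vector must be rewritten in the new coordinate frame, i.e.\ we apply $(\tau^{\omega}_{i,q})^{-1}$ to $\underline h$ (viewed inside $\cc^{q+1}\subset\cc^n$; note $\tau^{\omega}_{i,q}$ involves only the first $(q+1)\xi$ generators, so it acts only on $\underline h$). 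The first $q\xi$ of the resulting entries, $((\tau^{\omega}_{i,q})^{-1}\cdot\underline h)^{\le q}$, become the new $A\langle\cc^q\rangle$-label; the $(q+1)$-th block $((\tau^{\omega}_{i,q})^{-1}\cdot\underline h)_{q+1}$ slides past the dividing line and, under the inclusion $\Br_{(n-q-1)\cdot\XI}\hookrightarrow\Br_{(n-q)\cdot\XI}$ adding $\xi$ strands on the left, acts on $x$ by the degree-one ring element $r\bigl(((\tau^{\omega}_{i,q})^{-1}\cdot\underline h)_{q+1}\bigr)$ (this is exactly the $H$-space/module multiplication from Remark~\ref{comb-descr}, interpreted on $M_p$). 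This gives the displayed formula for $\partial_{i*}$; linearity and the sign $(-1)^i$ then come from the standard semi-simplicial differential $\dd=\sum_i(-1)^i\partial_{i*}$.

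The step I expect to be the main obstacle is the bookkeeping in the previous paragraph: verifying that the isomorphism (\ref{spseq-iso}) is set up so that the ``new'' coordinate splitting for $E^1_{q-1,p}$ differs from the image of the ``old'' one precisely by left-multiplication by $\tau^{\omega}_{i,q}$, and hence that the correction factor appearing is $(\tau^{\omega}_{i,q})^{-1}$ acting on $\underline h$ rather than $\tau^{\omega}_{i,q}$ itself or some conjugate. This rests on two facts established earlier: that $\tau^{\omega}_{i,q}$ satisfies $\partial_i\alpha_\omega=\tau^{\omega}_{i,q}\cdot\alpha_{d_i\omega}$ and commutes with $L_{q-1}$ (so the coset $\tau^{\omega}_{i,q}L_{q-1}$ is well defined), and that $\tau^{\omega}_{i,q}$ lives in the subgroup generated by $\sigma_1,\dots,\sigma_{(q+1)\xi-1}$, so that it acts trivially on the ``back'' coordinates $(\underline g)^{>q+1}$ and its effect is confined to the $\cc^{q+1}$ factor, with exactly the $(q+1)$-th block being transferred across the cut. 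Once the direction of this twist is pinned down (best checked on the small example in Figure~\ref{fig-tau}), the remaining verifications — compatibility with linear extension, and that the stated map indeed represents $\dd$ and not merely one summand — are routine.
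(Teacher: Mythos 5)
Your outline gets the combinatorics right and correctly traces the coset bookkeeping through $\bd_i$, but it leaves out the genuinely nontrivial step of the argument — which is not, as you suggest, pinning down the direction of the twist ($\tau^\omega_{i,q}$ versus its inverse), but rather showing that the twist on the $\EBr$-factor can be discarded at all. Working carefully through the identification of $\EBr_{n\cdot\XI}\times_{\Br_{n\cdot\XI}}(\Br_{n\cdot\XI}/L_q\times\cc^n)$ with $\EBr_{n\cdot\XI}\times_{L_q}\cc^n$, the face map comes out as
$[(e,\underline g)]_{L_q}\mapsto[(e\,\tau^\omega_{i,q},\ (\tau^\omega_{i,q})^{-1}\underline g)]_{L_{q-1}}$:
both factors of the Borel construction are twisted, not just the Hurwitz coordinate. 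To land on the stated formula you must replace this by
$[(e,\underline g)]_{L_q}\mapsto[(e,\ (\tau^\omega_{i,q})^{-1}\underline g)]_{L_{q-1}}$,
i.e.\ drop the right multiplication by $\tau^\omega_{i,q}$ in the $\EBr$-factor, and this is a homotopy statement, not a point-set bookkeeping one. Your appeal to a ``new coordinate frame'' absorbing the $\tau^\omega_{i,q}$ is not a substitute: the splitting $\cc^n\cong\cc^{q+1}\times\cc^{n-q-1}$ is fixed, and the identification of an orbit with a coset space via $\Gamma_{d_i\omega}$ uses the fixed standard simplex, so there is no freedom left to ``reinterpret'' the twist away.

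What actually closes this is the observation (the paper's explicit Claim in the proof) that right multiplication by $\tau^\omega_{i,q}$ on $\EBr_{n\cdot\XI}$ is an $L_q$-equivariant self-map of a contractible free $L_q$-space — equivariance precisely because $\tau^\omega_{i,q}$ commutes with $L_q$ — hence is $L_q$-equivariantly homotopic to the identity, so the two maps of Borel constructions above are homotopic and induce the same map on homology. You do cite the commutativity of $\tau^\omega_{i,q}$ with $L_{q-1}$, but only to justify that the coset $\tau^\omega_{i,q}L_{q-1}$ is well defined; you should also invoke it for the induced conjugation on $L_q$ being the identity, which is what makes the $\EBr$-twist homotopically invisible and is the crux of the lemma. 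Once that step is inserted, the rest of your argument (the decomposition of $\cc^n$, the observation that $\tau^\omega_{i,q}$ only touches the first $(q+1)\xi$ strands so $L_{q-1}$ and $\tau^\omega_{i,q}$ act on complementary blocks, and the identification of the block crossing the cut with multiplication by a degree-one element $r(\cdot)$) matches the paper.
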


\begin{proof}
In combinatorial terms, we may write the face maps $\partial_{i}$ on the semi-simplicial space $\EBr_{n\cdot\XI} \times_{\Br_{n\cdot\XI}}(\plant \times \cc^{n})$ as
$$
[(e, (\omega, \sigma L_{q}), \underline g)]_{\Br_{n\cdot\XI}} \mapsto[(e, (d_{i}\omega, \sigma\tau^{\omega}_{i,q} L_{q-1}), \underline g)]_{\Br_{n\cdot\XI}},
$$
where the $\tau_{q,i}^{\omega}$ are defined as in~(\ref{tau}). This may be rewritten as
\begin{equation}\label{claim-1}
[(e, \omega, \underline g)]_{L_{q}} \mapsto [(e \cdot\tau_{i,q}^{\omega}, d_{i}\omega, (\tau_{i,q}^{\omega})^{-1}\cdot\underline g)]_{L_{q-1}}.
\end{equation}

\textbf{Claim:}
The map (\ref{claim-1}) is $L_{q}$-equivariantly homotopic to
\begin{equation}\label{claim-2}
[(e, \omega, \underline g)]_{L_{q}} \mapsto [(e, d_{i}\omega, (\tau_{i,q}^{\omega})^{-1}\cdot\underline g)]_{L_{q-1}}.
\end{equation}

\textbf{Proof of the claim:}
Let $\iota$ be the identity on $\EBr_{n\cdot\XI}$ and $\tau$ multiplication in $\EBr_{n\cdot\XI}$ by $\tau_{i,q}^{\omega}$.  Now, $\tau$ descends to a map $\BBr_{n\cdot\XI} \to \BBr_{n\cdot\XI}$ which is induced by conjugation with $\tau_{i,q}^{\omega}$ in $\Br_{n\cdot\XI}$. Since $\tau_{i,q}^{\omega}$ commutes with the elements of $L_{q}$, this conjugation restricts to the identity on $L_{q}$. Therefore, both $\iota$ and $\tau$ descend to self-maps of $\BL_{q}$ homotopic to the identity (note that we may use $\EBr_{n\cdot\XI} / L_{q}$ as a model for $\BL_{q}$). Hence, $\tau$ is $L_{q}$-equivariantly freely homotopic to $\iota$. From this fact, the claim follows directly. \qed

Now, the map
\begin{align*}
\EBr_{n\cdot\XI} \times_{L_{q}} \cc^{n}  &\to  \EBr_{n\cdot\XI} \times_{L_{q-1}} \cc^{n}  \\
[(e, \underline g )]_{L_{q}} &\mapsto[(e, \underline g )]_{L_{q-1}}
\end{align*}
is identified with
$$
\cc^{q+1} \times \Hur_{G,(n-q-1)\cdot\XI} \to \cc^{q} \times \Hur_{G,(n-q)\cdot\XI},
$$
given by left concatenation of a Hurwitz vector with the last $\xi$-tuple $(\underline g)_{q+1}$ of $\underline g \in \cc^{n}$, where we also identify $$\Hur^{c}_{G,(n-q-i)\cdot\XI} \cong\EBr_{n\cdot\XI} \times_{L_{q-i}} \cc^{n-q-i}$$ for $i = 0,1$. In homology, this corresponds to multiplication by $r((\underline g)_{q+1}) \in R$.
Finally, note that $\tau_{i,q}^{\omega}$ only acts on $(\underline g)^{\leq q+1}$, while $L_{q-1}$ acts on the $\xi$-tuples $(\underline g)^{>q+1}$. Thus, the induced map in homology of (\ref{claim-2}) yields the desired form of ${{{\partial_{i}}_{*}}}$.

The lemma follows from the fact that the differential on the $E^{1}$-page of (\ref{the-sequence}) is given by the induced map in homology of the alternating sum of the face maps.
\end{proof}

For a graded $R$-module $M$ and $q\geq 0$, we write $M(q)$ for its \emph{shift} by $q$.

\begin{definition}\label{k-complexes}
Let $M$ be a graded left $R$-module. The \emph{$\KK$-complex associated to $M$} is defined as the complex $\KK(M)$ with terms
\begin{align*}
\KK(M)_{0} &= M, \\
\KK(M)_{q+1} &= A^{l_{q}^{\XI}} \otimes_{A} A\langle\cc^{q+1} \rangle \otimes_{A} M(q+1)
\end{align*}
for $q \geq 0$, where $l_{q}^{\XI}$ is given as in Lemma~\ref{orbits}.
The differentials on $\KK(M)$ are the linear maps defined by
\begin{align*}
\dd_{q+1}\colon \KK(M)_{q+1} &\to \KK(M)_{q} \\
\omega \otimes \underline g \otimes x 
&\mapsto \sum_{i=0}^{q} (-1)^{i} [d_{i}\omega \otimes ((\tau_{i,q}^{\omega})^{-1}\cdot\underline g)^{\leq q}\otimes r(((\tau_{i,q}^{\omega})^{-1}\cdot\underline g)_{q+1})\cdot x],
\end{align*}
where $\omega$ is the IC-sequence of a $q$-simplex, $\underline g \in \cc^{q+1}$, and $x \in M(q+1)$.
\end{definition}

In a less general form, $\KK$-complexes were introduced in~\cite[Sect.~4]{0912.0325}.
$\KK(M)$ is in fact a complex of graded left $R$-modules, where the grading on $\KK(M)_{q}$ is induced by the grading on $M(q)$: For $M = M_p$, this is immediate, as the $n$-th graded part of the $\KK$-complex is equal to a row in the spectral sequence (\ref{the-sequence}) by construction. The complex property is only needed in this case. The more general case involves computations which utilize the semi-simplicial identity on the face maps of $\plant$. These are performed in the author's Ph.D.~thesis (currently under review). 

Note that the differential $\dd_{q}$ preserves the grading: The grading on $\KK(M)_{q}$ is induced by the grading on $M(q)$, on which $\dd_{q}$ acts by the alternating sum of multiplication with degree one elements. This cancels out with the shifted grading on $\KK(M)_{q-1}$.

We resume:

\begin{corollary}\label{specseq-corollary}
There is a homological spectral sequence with
$$
E^{1}_{qp} = n\text{-th graded part of } \KK(M_{p})_{q+1},
$$ differentials on the $E^{1}$-page given by the differentials on $\KK(M_{p})$,
which converges to $H_{p+q}(\Hur_{G,n\cdot\XI}^{c};A)$ for $p+q \leq \left\lfloor \frac{n}{2}\right\rfloor - 2$.
\end{corollary}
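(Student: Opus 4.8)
The plan is to assemble ingredients that are already in place. Start from the semi-simplicial space $\EBr_{n\cdot\XI}\times_{\Br_{n\cdot\XI}}(\plant\times\cc^{n})$ and its associated first-quadrant homological spectral sequence~(\ref{the-sequence}), with $E^{1}_{qp}=H_{p}(\EBr_{n\cdot\XI}\times_{\Br_{n\cdot\XI}}(\plantq\times\cc^{n});A)$ abutting to $H_{p+q}(\EBr_{n\cdot\XI}\times_{\Br_{n\cdot\XI}}(\rplant\times\cc^{n});A)$. Everything then reduces to three identifications: of the abutment, of the $E^{1}$-page, and of the $d^{1}$-differential.

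First I would pin down the abutment. Since $\min\XI>0$, Theorem~\ref{delta-conn}(\ref{colored-conn}) gives that $\rplant$ is $(\lfloor n/2\rfloor-2)$-connected, so the collapse map $\rplant\to\pt$ induces an isomorphism on $H_{i}$ for $i\leq\lfloor n/2\rfloor-2$ and a surjection on $H_{\lfloor n/2\rfloor-1}$. Applying $\EBr_{n\cdot\XI}\times_{\Br_{n\cdot\XI}}(-\times\cc^{n})$ and comparing the Serre spectral sequences of the two resulting fibrations over $\BBr_{n\cdot\XI}$ (with fibre map $(\rplant\times\cc^{n})\to\cc^{n}$, highly connected on each $\cc^{n}$-slice), one obtains that $\EBr_{n\cdot\XI}\times_{\Br_{n\cdot\XI}}(\rplant\times\cc^{n})\to\EBr_{n\cdot\XI}\times_{\Br_{n\cdot\XI}}\cc^{n}=\Hur_{G,n\cdot\XI}^{c}$ is an isomorphism on $H_{p+q}$ for $p+q\leq\lfloor n/2\rfloor-2$. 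Hence~(\ref{the-sequence}) converges to $H_{p+q}(\Hur_{G,n\cdot\XI}^{c};A)$ in that range.

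Next I would match the $E^{1}$-page with the $\KK$-complex. For $q\geq n$ the complex $\plant$ has dimension $n-1$, so $\plantq=\varnothing$ and $E^{1}_{qp}=0$, matching the vanishing of the $n$-th graded part of $\KK(M_{p})_{q+1}$ (whose degree-$n$ piece involves $H_{p}(\Hur_{G,(n-q-1)\cdot\XI}^{c};A)$ with $n-q-1<0$). For $q<n$, Lemmas~\ref{orbits} and~\ref{stabilizers} decompose $\plantq$ into $l_{q}^{\XI}$ copies of $\Br_{n\cdot\XI}/L_{q}$ with $L_{q}\cong\Br_{(n-q-1)\cdot\XI}$, and the orbit-wise (Shapiro-type) computation~(\ref{spseq-iso}) yields $E^{1}_{qp}\cong A^{l_{q}^{\XI}}\otimes_{A}A\langle\cc^{q+1}\rangle\otimes_{A}H_{p}(\Hur_{(n-q-1)\cdot\XI}^{c};A)$, which is precisely the $n$-th graded part of $\KK(M_{p})_{q+1}$ from Definition~\ref{k-complexes}. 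The $d^{1}$-differential of~(\ref{the-sequence}) is induced in homology by the alternating sum $\sum_{i=0}^{q}(-1)^{i}\partial_{i}$ of face maps; Lemma~\ref{spseq-translation} identifies it, under these isomorphisms, with $\sum_{i=0}^{q}(-1)^{i}(\partial_{i})_{*}$, i.e.\ with the differential $\dd_{q+1}$ of $\KK(M_{p})$ (in particular $d^{1}\circ d^{1}=0$ recovers that $\KK(M_{p})$ is a complex). Re-indexing so that column $q$ carries $\KK(M_{p})_{q+1}$ gives the asserted spectral sequence.

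The substantive content is carried entirely by Theorem~\ref{delta-conn} and Lemmas~\ref{orbits}, \ref{stabilizers}, \ref{spseq-translation}; the only genuinely delicate points are the comparison argument of the second paragraph that replaces the abutment by $H_{*}(\Hur_{G,n\cdot\XI}^{c};A)$ within the stated range — in particular that mere surjectivity at the boundary degree $\lfloor n/2\rfloor-1$ still suffices to get an isomorphism at degrees $\leq\lfloor n/2\rfloor-2$ — and the careful bookkeeping of the grading/index shift so that the $n$-th graded pieces of $\KK(M_{p})$ line up with the rows of~(\ref{the-sequence}). Both are routine but worth spelling out.
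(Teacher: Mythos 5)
Your proposal is correct and follows essentially the same route the paper takes: Theorem~\ref{delta-conn}(\ref{colored-conn}) identifies the abutment with $H_{p+q}(\Hur_{G,n\cdot\XI}^{c};A)$ in the stated range, Lemmas~\ref{orbits} and~\ref{stabilizers} give the $E^{1}$-page via~(\ref{spseq-iso}), and Lemma~\ref{spseq-translation} matches $d^{1}$ with the differential of $\KK(M_{p})$ from Definition~\ref{k-complexes}. The paper leaves the fibration-comparison step for the abutment implicit, so your second paragraph merely supplies a detail the paper takes for granted rather than diverging from it.
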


We now consider the homology of $\KK$-complexes for $M = M_{0} = R$. In this case, multiplication in $R$ gives $\KK(R)_{q}$ (and hence also the homology modules of $\KK(R)$) the structure of a two-sided graded $R$-module. A simpler version of the following lemma is proved in \cite[Lemma~4.11]{0912.0325}.

\begin{lemma}\label{killing}
For all $q\geq 0$, $H_{q}(\mathcal{K}(R))$ is killed by the right action of $R_{>0}$.
\end{lemma}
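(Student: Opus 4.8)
The plan is to exhibit, for each $q\ge 0$ and each homogeneous $r\in R_{>0}$, an explicit chain homotopy on $\KK(R)$ witnessing that the chain map ``right multiplication by $r$'' is null-homotopic; passing to homology then gives the claim. The natural candidate for the homotopy in degree $q$ is a map $s_{q}\colon \KK(R)_{q}\to\KK(R)_{q+1}$ that inserts a new plant. Concretely, given a generator $\omega\otimes\underline g\otimes x\in\KK(R)_{q}$ (with $\omega$ the IC-sequence of a $(q-1)$-simplex, $\underline g\in\cc^{q}$, $x\in R$), and writing $r=r(h)$ for $h\in\cc$ a degree-one generator of $R$, I would send it to $\omega'\otimes(\underline g, h)\otimes x\in\KK(R)_{q+1}$, where $\omega'$ is the IC-sequence obtained from $\omega$ by appending, as a new bottom-index plant, the arcs dictated by $h$, with all indices shifted up by one. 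Because $\KK(R)$ is built degreewise from the $\Br_{n\cdot\XI}$-orbit data of the plant complex, and the face map $d_0$ of an IC-sequence removes exactly the index-$0$ plant, this $s_{q}$ is designed so that $\partial_0$ applied to $\omega'$ returns $\omega$, while the higher face maps $\partial_i$ for $i\ge 1$ on $\omega'$ correspond to the faces of $\omega$ with the new plant carried along.

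The key computation is then the homotopy identity $\dd_{q+1}s_{q}+s_{q-1}\dd_{q}=(\,\cdot\, r)$ on $\KK(R)_{q}$. Here the $i=0$ term of $\dd_{q+1}s_{q}$ produces, by Lemma~\ref{spseq-translation}'s formula and the fact that $\tau^{\omega'}_{0,q}$ acts only on the first $\xi$-tuple (which is trivial for a fresh index-$0$ plant placed in standard position, so $\tau^{\omega'}_{0,q}=\id$ or at worst acts trivially on the relevant coordinate), the term $\omega\otimes\underline g\otimes r(h)\cdot x=\omega\otimes\underline g\otimes (x\cdot r)$ — this is the right-multiplication-by-$r$ term, using centrality of $r$ (or, more carefully, that left multiplication by $r(h)$ on the $H_p$-factor equals right multiplication when $r$ is central). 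The remaining terms $i\ge 1$ of $\dd_{q+1}s_{q}$ must cancel against $s_{q-1}\dd_{q}$; the signs work because inserting the new plant at the bottom shifts all old indices by one, so $\partial_i$ on $\omega'$ (for $i\ge 1$) matches $\partial_{i-1}$ on $\omega$ up to reindexing, and the sign $(-1)^i$ versus $(-1)^{i-1}$ combined with the extra $s$-insertion produces the required cancellation. One must check that the $\tau$-elements are compatible under this reindexing: this is where I expect to invoke that the formula~(\ref{tau}) for $\tau^{\omega}_{i,q}$ only involves braid generators of index at most $(q+1)\xi-1$ and is ``stable'' under appending a trivial plant, so $\tau^{\omega'}_{i+1,q+1}$ restricted to the last $q\cdot\xi$ tuples agrees with $\tau^{\omega}_{i,q}$.

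The main obstacle will be bookkeeping the face-map and $\tau$-element compatibility precisely — i.e., verifying that the insertion map $s$ genuinely intertwines the combinatorial differentials with the shift, so that the homotopy identity holds on the nose and not merely up to higher homotopy. In particular, $\dd$ involves the twisted Hurwitz action via $(\tau^{\omega}_{i,q})^{-1}$, and one needs that inserting $h$ at the bottom does not disturb the action of these twists on the pre-existing entries $\underline g$; this follows from the ``independence of $n$'' statement in Remark~\ref{pic-tau}(i) and the explicit index ranges in~(\ref{tau}), but it requires care. A cleaner alternative I would also consider: reduce to the single-generator case $r=r(h)$ (since $R_{>0}$ is generated in degree one by Remark~\ref{comb-descr}, and a product of null-homotopic maps composed with anything is null-homotopic — actually one only needs that $R_{>0}\cdot R_{>0}\subseteq R_{>0}$ and that the ideal generated by the $r(h)$ acts as zero once each $r(h)$ does), and then phrase $s$ as the contraction coming from the ``extra'' vertex $v$ used in the contractibility proof of Proposition~\ref{contractible}, transported through the semi-simplicial isomorphism $\Gamma$. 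Either way, the heart of the matter is the one displayed homotopy identity; everything else is formal.
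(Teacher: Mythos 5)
Your high‑level idea — a chain homotopy $S_h\colon\KK(R)_q\to\KK(R)_{q+1}$ given by inserting a new index‑$0$ plant and the extra Hurwitz datum $h$ — is exactly the paper's strategy, and your observations about the face identities $d_0(\xi+\tilde\omega)=\tilde\omega$, $d_i(\xi+\tilde\omega)=\xi+d_{i-1}\tilde\omega$ and the ``stability'' of the $\tau$‑elements under shifting are correct. But two of the explicit steps in your sketch are wrong, and they matter.

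First, $\tau^{\omega'}_{0,q+1}$ does \emph{not} act trivially on the tuple. Applying $d_0$ deletes the index‑$0$ plant, and by definition $\tau^{\omega'}_{0,q+1}$ drags the last cluster all the way around; the correct identity is
\begin{equation*}
\bigl(\tau^{(\xi+\tilde\omega)}_{0,q+1}\bigr)^{-1}\cdot(h,\underline g)=(\underline g,\,h^{\partial\underline g}),
\end{equation*}
a cyclic shift with conjugation, not the identity. Consequently, if you insert $h$ untwisted, the $i=0$ term of $\dd S_h$ produces $\omega\otimes\underline g\otimes r(h^{\,\partial\underline g})\cdot x$, i.e.\ \emph{left} multiplication by a \emph{conjugate} of $r(h)$ — not right multiplication by $r(h)$. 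Second, your appeal to ``centrality of $r$'' to convert $r(h)\cdot x$ into $x\cdot r(h)$ fails: the degree‑one generators $r(h)$, $h\in\cc$, are generally not central in the monoid algebra $R$, and the lemma precisely must distinguish left from right. The paper fixes both problems at once by inserting a pre‑twisted element, $S_h(\tilde\omega\otimes\underline g\otimes[\underline s])=(\xi+\tilde\omega)\otimes\bigl(h^{(\partial\underline g\,\partial[\underline s])^{-1}},\underline g\bigr)\otimes[\underline s]$; after the cyclic shift, $(h^{(\partial[\underline s])^{-1}},\underline s)$ is $\Br$‑equivalent to $(\underline s,h)$, which is how the $i=0$ term becomes honest right multiplication by $r(h)$. (There is also a minor inconsistency in your notation: you say the new plant has index $0$ yet write the Hurwitz tuple as $(\underline g,h)$ with $h$ in the last slot; for the $\tau$‑bookkeeping to close you need the new $\xi$‑tuple in the first slot.) Without the pre‑conjugation the homotopy identity does not close up, so as written the argument does not prove the lemma.
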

\begin{proof}
For simplicity of notation, in this proof we work with P- instead of IC-sequences.

Recall that for a Hurwitz vector $\underline g\in\cc^{q+1}$, we write $\partial\underline g$ for its boundary, which is invariant under the $\Br_{(q+1)\cdot\XI}$-action. $R$ is generated as an $A$-module by orbits $[\underline s] \in \cc^{n}/\Br_{n\cdot\XI}$, for $n\geq 0$. Let $h\in\cc$, such that the elements of the form $r(h)$ generate $R_{>0}$ as an $A$-algebra. We define a  map $S_{h}\colon \KK(R)_{q+1} \to \KK(R)_{q+2}$ by linear extension of
$$
S_{h}(\tilde\omega \otimes \underline g \otimes [\underline s]) = (\xi +\tilde\omega ) \otimes (h^{(\partial \underline g \partial [\underline s])^{-1}}, \underline g) \otimes [\underline s],
$$
where $\tilde\omega$ is the P-sequence of a $q$-simplex, and $\underline g, h, [\underline s]$ as above.
Here, $(\xi +\tilde\omega)$ denotes the P-sequence of a $(q+1)$-simplex obtained by increasing every entry of $\tilde\omega$ by $\xi$ and then attaching $(1, \ldots, \xi)$ from the left. In particular, the equations 
\begin{align}\label{diff-new}
\begin{split}
d_{0}(\xi +\tilde\omega) &= \tilde\omega \\
d_{i}(\xi +\tilde\omega) &= (\xi + d_{i-1}\tilde\omega)
\end{split}
\end{align}
hold for $i = 1, \ldots, q$. Furthermore, since the first $\xi$ positions of the P-sequence $\xi + \tilde\omega$ are given by $1, \ldots, \xi$, the equations
\begin{align}\label{diff-new-2}
\begin{split}
(\tau_{0,q+1}^{(\xi +\tilde\omega)} )^{-1}\cdot (h, \underline g )
&= ( \underline g, h^{\partial\underline g}) \\
(\tau_{i+1,q+1}^{(\xi +\tilde\omega)} )^{-1}\cdot (h, \underline g )
&= (h, (\tau_{i,q}^{\tilde\omega})^{-1}\cdot \underline g )
\end{split}
\end{align}
hold for any $h\in\cc$ and $i=0, \ldots, q$, cf.~the definition of $\tau_{i,q}^{\tilde\omega}$ in~(\ref{tau}).

We claim that $S_{h}$ is a chain homotopy from right multiplication with $r(h)$ to the zero map. Indeed, we have
\begingroup
\allowdisplaybreaks
\begin{align*}
&(\dd_{q+1} S_{h} + S_{h} \dd_{q})(\tilde\omega \otimes \underline g \otimes [\underline s]) \\
&\underset{\phantom{(\ref{diff-new-2})}}{\overset{\phantom{(\ref{diff-new})}}{=}} \dd_{q+1}((\xi +\tilde\omega) \otimes (h^{(\partial \underline g \partial \underline s)^{-1}}, \underline g) \otimes [\underline s] ) \\
&\phantom{\underset{(\ref{diff-new-2})}{\overset{(\ref{diff-new})}{=}}}+ 
\sum_{i=0}^{q} (-1)^{i} S_{h}( [d_{i}\tilde\omega \otimes ((\tau_{i,q}^{\tilde\omega})^{-1}\cdot\underline g)^{\leq q}\otimes [(((\tau_{i,q}^{\tilde\omega})^{-1}\cdot\underline g)_{q+1},  \underline s )]] ) \\
&\underset{(\ref{diff-new-2})}{\overset{(\ref{diff-new})}{=}} \tilde\omega \otimes \underline g \otimes (h^{(\partial \underline s)^{-1}}, [\underline s] ) \\
&+ \sum_{i=0}^{q} (-1)^{i+1}[(\xi + d_{i}\tilde\omega) \otimes (h^{(\partial \underline g\partial\underline s)^{-1}}, ((\tau_{i,q}^{\tilde\omega})^{-1}\cdot \underline g )^{\leq q} ) \otimes [(((\tau_{i,q}^{\tilde\omega})^{-1}\cdot \underline g )_{q+1}, \underline s)]] \\
&+ \sum_{i=0}^{q} (-1)^{i}[(\xi + d_{i}\tilde\omega)
\otimes (h^{(\partial( (\tau_{i,q}^{\tilde\omega})^{-1}\cdot\underline g)\partial\underline s)^{-1}}, ((\tau_{i,q}^{\tilde\omega})^{-1}\cdot \underline g )^{\leq q} ) \otimes [(((\tau_{i,q}^{\tilde\omega})^{-1}\cdot \underline g )_{q+1}, \underline s)]] \\
&\underset{\phantom{(\ref{diff-new-2})}}{\overset{\phantom{(\ref{diff-new})}}{=}} \tilde\omega \otimes \underline g \otimes [\underline s \cdot r(h)],
\end{align*}
\endgroup
as $(h^{(\partial \underline s)^{-1}}, \underline s )$ is equivalent to $(\underline s,h)$ under the $\Br_{(n+1)\cdot\XI}$-action. Hence, $S_{h}$ is the desired chain homotopy.
\end{proof}

\subsection*{Modules over stabilized rings}\label{stabilized}

In the following, we study graded modules over graded rings satisfying a specific stability condition which will eventually form the essential criterion for homological stabilization of Hurwitz spaces. This generalizes the modules $M_{p}$ over the ring $R$.

\begin{definition}\label{def_astable}
Let $R = \bigoplus_{i\in\N_{0}} R_{i}$ be some graded ring, $A = R/R_{>0} \cong R_{0}$ the ring of degree zero elements, and $U\in R$ a central homogeneous element of positive degree. The ring $R$ is called \emph{$A$-stabilized by $U$} if the three following conditions are satisfied:
\begin{enumerate}[(i)]
\item Both kernel and cokernel of the multiplication $R \overset{U\cdot}{\to} R$ have finite degree as graded $R$-modules; in other words, $D_R(U) = \max\left\{\deg (R/UR), \deg (R[U])\right\}$ is finite, \label{isocoker}
\item $A$ is commutative, and \label{comnoet}
\item $R$ is generated in degree one (i.e., by $R_{1}$) as an algebra over $A$. \label{degone}
\end{enumerate}
We call $U$ the \emph{stabilizing element} for $R$.
\end{definition}

In what follows, let $M$ be a graded left $R$-module, where $R$ is $A$-stabilized by $U\in R$. We use the following notation:
\begin{align*}
D_{M}(U) &= \max\{\deg(M[U]), \deg(M/UM)\} \\
\delta_{M}(U) &= \max\{\deg (\Tor_{0}^{R}(R/UR, M)), \deg (\Tor_{1}^{R}(R/UR, M))\}
\end{align*}
Though both quantities depend heavily on the stabilizing element $U\in R$, we usually use the symbols $D_{M}$ and $\delta_{M}$.
Furthermore, we write $H_{i}(M)$ for the graded left $R$-module $\Tor_{i}^R(A, M)$.

We will from now on assume that $D_{R}(U)$ is positive. The case $D_{R}(U) = 0$ was fully treated earlier this section in the part about purely abelian covers.

Section 4 of \cite{0912.0325} is about modules over graded rings $R$ which are $A$-stabilized by $U$. In that article's setting, it makes sense to focus on the case where $A$ is a field, though the proofs of Lemma~4.4 through Lemma~4.10 carry over directly to the case where $A$ is an arbitrary commutative ring. We proved an analogue to \cite[Lemma~4.11]{0912.0325} in Lemma~\ref{killing}. Therefore, the proofs and results of Proposition~4.12 and~4.13 of \cite{0912.0325} are also applicable to our situation.

More specifically, we will need the follwing results:
\begin{align}
D_{M} &\leq \max\{\deg H_{0}(M), \deg H_{1}(M)\} + 5D_{R}, \label{homlem4.6} \\
\deg H_{q}(\KK(R)) &\leq D_{R} + \deg U + q, \label{4-12} \\
\deg H_{q}(\KK(M)) &\leq \max\{\deg H_{0}(M), \deg H_{1}(M)\} + (q+5)\cdot D_{R}+ \deg U \label{4-13},
\end{align}
cf.~\cite[Lemma~4.6 and~4.9]{0912.0325}, \cite[Prop.~4.12]{0912.0325}, and \cite[Prop.~4.13]{0912.0325}, respectively.

\begin{proposition}\label{homprop-4.2}
Let $R$ be $A$-stabilized by $U$.
Beyond, let $M$ be a graded left $R$-module and $h_{i} = \deg (H_{i}(\KK(M)))$. Then, we have
$
h_{q} \leq \max\{h_{0}, h_{1}\} + D_{R}q + (5 D_{R} + \deg U),
$
and multiplication by $U$, $M\overset{U\cdot}{\to}M$, is an isomorphism in source degree greater than or equal to $\max\{h_{0}, h_{1}\} + 5D_{R} + 1$.
\end{proposition}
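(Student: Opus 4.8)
The plan is to deduce both assertions from the bounds (\ref{homlem4.6}), (\ref{4-12}), (\ref{4-13}) recalled above, once the quantities $h_{0},h_{1}$ (defined via $\KK(M)$) have been related to $\deg H_{0}(M)$ and $\deg H_{1}(M)$, where as before $H_{i}(M)=\Tor_{i}^{R}(A,M)$. First I would record two elementary identifications. Since $\dd_{1}\colon \KK(M)_{1}\to\KK(M)_{0}=M$ is the map $(\omega\otimes\underline g\otimes x)\mapsto r(\underline g)\cdot x$ with $\underline g\in\cc$ (the factor $\tau_{0,0}^{\omega}$ being trivial), its image is $R_{1}M$, which equals $R_{>0}M$ because $R$ is generated in degree one; hence $H_{0}(\KK(M))=M/R_{>0}M=A\otimes_{R}M=H_{0}(M)$, so $h_{0}=\deg H_{0}(M)$. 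Specialising to $M=R$ this also gives $H_{0}(\KK(R))=R/R_{>0}=A$.

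The heart of the argument is the inequality $\deg H_{1}(M)\le h_{1}$. Here I would use that $\KK(R)$ is a bounded-below complex of finitely generated free graded left $R$-modules (indeed $\KK(R)_{0}=R$ and $\KK(R)_{q+1}=A^{l_{q}^{\XI}}\otimes_{A}A\langle\cc^{q+1}\rangle\otimes_{A}R(q+1)\cong R(q+1)^{\oplus l_{q}^{\XI}|\cc|^{q+1}}$ for $q\ge 0$, since $\cc$ is finite), and that $\KK(M)\cong\KK(R)\otimes_{R}M$ as complexes of graded left $R$-modules: the differential of Definition~\ref{k-complexes} for a general $M$ is obtained from the one for $M=R$ by base change. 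The associated hyperhomology spectral sequence of the complex of flat modules $\KK(R)$ then reads
$$
E^{2}_{p,q}=\Tor_{p}^{R}\!\big(H_{q}(\KK(R)),\,M\big)\ \Longrightarrow\ H_{p+q}(\KK(M)).
$$
Because $H_{0}(\KK(R))=A$ we obtain $E^{2}_{1,0}=\Tor_{1}^{R}(A,M)=H_{1}(M)$; since the spectral sequence lies in the first quadrant, no nonzero differential enters or leaves position $(1,0)$ on any page $r\ge 2$, so $E^{\infty}_{1,0}=E^{2}_{1,0}=H_{1}(M)$ is a graded subquotient of $H_{1}(\KK(M))$, whence $\deg H_{1}(M)\le\deg H_{1}(\KK(M))=h_{1}$. (Alternatively this inequality can be quoted from the framework of \cite{0912.0325}, by comparing $\KK(M)$ with $P_{\bullet}\otimes_{R}M$ for a free resolution $P_{\bullet}\to A$.)

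Granting this, $\max\{\deg H_{0}(M),\deg H_{1}(M)\}\le\max\{h_{0},h_{1}\}$, and then (\ref{4-13}) gives
$h_{q}\le\max\{h_{0},h_{1}\}+(q+5)D_{R}+\deg U=\max\{h_{0},h_{1}\}+D_{R}q+(5D_{R}+\deg U)$,
which is the first claim. For the second claim, (\ref{homlem4.6}) yields $D_{M}=\max\{\deg(M[U]),\deg(M/UM)\}\le\max\{\deg H_{0}(M),\deg H_{1}(M)\}+5D_{R}\le\max\{h_{0},h_{1}\}+5D_{R}$. Now fix any $k\ge\max\{h_{0},h_{1}\}+5D_{R}+1$; then $k>D_{M}$, and also $k+\deg U>D_{M}$ as $\deg U>0$. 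Consequently $(M[U])_{k}=0$, so $U\cdot\colon M_{k}\to M_{k+\deg U}$ is injective, while $(M/UM)_{k+\deg U}=0$, so it is surjective; hence it is an isomorphism, as required.

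I expect the main obstacle to be the identification $\KK(M)\cong\KK(R)\otimes_{R}M$ together with the legitimacy of the hyperhomology spectral sequence: one must pin down that the left $R$-module structure on the terms of the $\KK$-complex — the one induced by the $H$-space structure on $\bigsqcup_{n}\Hur_{G,n\cdot\XI}^{c}$, under which the face differentials are $R$-linear and grading-preserving — is precisely the structure with respect to which base change and flatness are being invoked. Once that bookkeeping is in place, the remainder is a routine combination of the cited degree bounds with the degreewise behaviour of multiplication by $U$.
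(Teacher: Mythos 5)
Your proof is correct, and it reproduces the paper's overall strategy (establish $\deg H_{i}(M)\le h_{i}$ for $i=0,1$, then feed this into the cited bounds~(\ref{homlem4.6}) and~(\ref{4-13})); the $i=0$ case and the concluding degree-chase for multiplication by $U$ are identical to the paper's. The genuine departure is in the key inequality $\deg H_{1}(M)\le h_{1}$. The paper's argument is elementary and hands-on: it factors $\dd_{1} = \beta\circ\alpha$ through $R_{>0}\otimes_{R}M$, identifies $\ker\beta = \Tor_{1}^{R}(A,M)$ via the resolution $0\to R_{>0}\to R\to A\to 0$, checks by direct computation with the Hurwitz action that $\alpha\circ\dd_{2}=0$, and deduces $\ker\beta = \alpha(\ker\dd_{1})$ lives in degree $\le h_{1}$. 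You instead identify $\KK(M)\cong\KK(R)\otimes_{R}M$ as a base change of the complex of graded free left $R$-modules $\KK(R)$ and invoke the universal-coefficient (hyperhomology) spectral sequence $E^{2}_{p,q}=\Tor_{p}^{R}(H_{q}(\KK(R)),M)\Rightarrow H_{p+q}(\KK(M))$; since $H_{0}(\KK(R))=A$, the term $E^{2}_{1,0}=H_{1}(M)$ sits in a corner where (with differentials of bidegree $(-r,r-1)$) no nonzero differential enters or leaves, so it persists to $E^{\infty}$ and is a graded subquotient of $H_{1}(\KK(M))$. Both approaches are valid. Yours is more conceptual, avoids the explicit cancellation $\alpha\circ\dd_{2}=0$, and has the pleasant side effect of showing that $\KK(M)$ is a complex for every graded $M$ as soon as $\KK(R)=\KK(M_{0})$ is -- a point the paper explicitly defers to the author's thesis. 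The price is the extra bookkeeping you flag yourself: verifying the base-change isomorphism respects the differentials of Definition~\ref{k-complexes} and the internal grading, and setting up the spectral sequence as one of graded modules. The paper's route stays closer to the explicit description of $\dd_{1}$ and $\dd_{2}$ and needs no double-complex machinery.
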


\begin{proof}
The present proof follows \cite[Thm.~4.2]{0912.0325}.

 We show that for $i = 0,1$, 
\begin{equation}\label{claim-prop}
\deg (H_{i}(M)) \leq h_{i}.
\end{equation}
Using this result, we obtain
\begin{align*}
h_{q} &\overset{{(}\ref{4-13}{)}}{\leq} \max\{\deg (H_{0}(M)), \deg (H_{1}(M))\} + (q+5)\cdot D_{R} + \deg U \\
&\overset{(\ref{claim-prop})}{\leq} \max\{h_{0}, h_{1}\} + D_{R}q +(5 D_{R} + \deg U),
\end{align*}
which is the first part of the statement.

Furthermore, by (\ref{homlem4.6}), multiplication by $U$ is an isomorphism in source degree greater or equal $\max\{\deg H_{0}(M), \deg H_{1}(M)\} + 5D_{R} + 1$. Together with~(\ref{claim-prop}), this gives the second claim of the proposition.

It remains to show~(\ref{claim-prop}). For $i=0$, we have $H_{0}(M) = A \otimes_{R} M = M/R_{>0}M$ and $H_{0}(\KK(M)) = M/\im \dd_{1}$. Now, $\dd_{1}(\omega, g, x) = r(g)\cdot x$ for all elementary tensors in $\KK(M)_{1}$, so $\im \dd_{1} = R_{>0}M$ and the claim is vacuously true.

For $i=1$, we factor the map $\dd_{1}\colon \KK(M)_{1} \to M$ as $\dd_{1} = \beta \circ \alpha$,
$$
\KK(M)_{1} = A^{l_{0}^{\XI}} \otimes_{A} A\langle\cc\rangle \otimes_{A} M(1) \overset{\alpha}{\to} R_{>0}\otimes_{R} M\overset{\beta}{\to} M,
$$
with $\alpha(\omega \otimes g \otimes x) = r(g) \otimes x$ and $\beta (r \otimes x) = r\cdot x$. As $R_{>0}$ is generated by elements of the form $r(g)$, we can factor any $r \in R_{>0}$ as $r = r(g)\cdot r'$ for some $r' \in R$; therefore, $\alpha$ is surjective. It is also degree-preserving -- note that $R_{>0} \otimes_{R} M$ is graded via $\deg(r\otimes x) = \deg r + \deg x$.

Now, we have a sequence
$$
\KK(M)_{2} \overset{\dd_{2}}{\to} \ker\dd_{1} \to H_{1}(\KK(M))\to 0
$$
which is by definition exact in the middle and on the right, so $\ker\dd_{1}$ is generated as an $A$-module in degree at most $\max\{\deg(\im\dd_{2}), \deg (H_{1}(\KK(M)))\}$. Now, the composition
$$
\KK(M)_{2} \overset{\dd_{2}}{\to} \ker\dd_{1} \overset{\alpha}{\to} R_{>0}\otimes_{R} M
$$
is zero, since it maps $\omega \otimes \underline g \otimes x$ to the element 
\begin{align*}
&r((( \tau^{\omega}_{0,1} )^{-1}\underline g)_{1} )
\cdot r((( \tau^{\omega}_{0,1} )^{-1}\underline g)_{2} ) \otimes x 
-
r((( \tau^{\omega}_{1,1} )^{-1}\underline g)_{1} )
\cdot r((( \tau^{\omega}_{1,1} )^{-1}\underline g)_{2} ) \otimes x
\end{align*}
which equals zero because $( \tau^{\omega}_{0,1} )^{-1}\underline g$ and $( \tau^{\omega}_{1,1} )^{-1}\underline g$ are equivalent up to the Hurwitz action.
In other words, the elements of $\im\dd_{2} \subset \ker\dd_{1} \subset \KK(M)_{1}$ are killed by $\alpha$. Therefore, $\alpha(\ker \dd_{1})$ is generated as an $A$-module in degree $\leq \deg (H_{1}(\KK(M)))$. 

Now, as $\alpha$ is surjective, this implies $\deg(\ker \beta) \leq \deg (H_{1}(\KK(M)))$ (recall that $A$ is graded trivially). But the exact sequence
$
0 \to R_{>0} \to R \to A \to 0
$
is a projective resolution of $A = R/R_{>0}$; tensoring with $M$, we get an identification
$
H_{1}(M) = \Tor_{1}^{R}(A,M) = \ker\beta.
$ 
Thus, we have proved (\ref{claim-prop}).
\end{proof}

\subsection*{The proof}

\begin{proof}[Proof of Theorem~\ref{the-theorem}]\label{hurwitz-proof}
We follow the proof strategy of~\cite[Thm.~6.1]{0912.0325}, with an extra focus on the determination of the explicit stable range.

By assumption, $D_{R} = D_{R}(U)$ is finite and positive. We know from Remark~\ref{deg-one} that $R$ is generated in degree one as an algebra over the commutative ring $A$. From these facts, we conclude that $R$ is $A$-stabilized by $U$.

As before, let
$
M_{p} = \bigoplus_{n\geq 0} H_{p}(\Hur_{G,n\cdot\XI}^{c}; A).
$
In order to prove the theorem, we need to show that multiplication by $U$, $M_{p} \overset{U\cdot}{\to} M_{p}$, is an isomorphism in source degree $n > (8 D_{R} + \deg U)p + 7D_{R} + \deg U$. To see this, we show that
\begin{equation}\label{thm-statement}
\deg (H_{q}(\KK(M_{p}))) \leq D_{R} + \deg U + (8 D_{R} + \deg U)p + D_{R}q
\end{equation}
holds for all $q\geq 0$. Then, the theorem follows from the second statement of Proposition~\ref{homprop-4.2}, considering the cases $q = 0,1$. We prove~(\ref{thm-statement}) by induction on~$p$.

For $p = 0$, we have $M_{0} = R$. Now, by~(\ref{4-12}),
$$
\deg (H_{q}(\KK(R))) \overset{(\ref{4-12})}{\leq} D_{R} + \deg U + q \leq D_{R} + \deg U + D_{R} q,
$$
which implies the assertion.

For the inductive step, suppose that~(\ref{thm-statement}) holds for $0\leq p<P$. The final terms of $\KK(M_{P})$ are given by
$$
\KK(M_{P})_{2} \overset{\dd_{2}}{\to} \KK(M_{P})_{1} \overset{\dd_{1}}{\to} M_{P}.
$$
The $n$-th graded part of $\dd_{2}$ is a differential $\dd\colon E^{1}_{1P} \to E^{1}_{0P}$ in the spectral sequence from Corollary~\ref{specseq-corollary}. In the range $p\leq\left\lfloor\frac{n}{2}  \right\rfloor - 2$, we can identify $\dd_{1}$ with an edge map in the same spectral sequence.

Now, $E^{2}_{qp}$ is given by the $n$-th graded part of $H_{q+1}(\KK(M_{p}))$. From the inductive hypothesis, for $j>1$, we obtain $E^{2}_{j, P+1-j} = 0$ for $$n > 10 D_{R} + 2\deg U - (7 D_{R}+\deg U)j + (8D_{R}+\deg U)P.$$
Hence, we have $E^{2}_{0P} = E^{\infty}_{0P}$ for $n>-4 D_{R} + (8D_{R}+\deg U)P$, as there are no nonzero differentials going into or out of $E^{2}_{0P}$.

Similarly, for $j>0$, we have $E^{2}_{j,P-j} = 0$ for
$$n > 2 D_{R} + \deg U - (7 D_{R}+\deg U)j + (8D_{R}+\deg U)P.$$
Thus, for $n > -5 D_{R } + (8D_{R}+\deg U)P$, the only graded piece of $H_{P}(\Hur_{G,n\cdot\XI}^{c};A)$ which does not vanish is $E^{\infty}_{0P}$.

Combining these results, we see that $E^{2}_{0P} \cong E^{\infty}_{0P} \cong H_{P}(\Hur_{G,n\cdot\XI}^{c};A)$ as long as $n> -4 D_{R} + (8D_{R}+\deg U)P$. In particular, the edge map $\coker (d_{2}) \to M_{P}$ is an isomorphism in degrees above $-4 D_{R} + (8D_{R}+\deg U)P$, and so
\begin{equation}\label{pf-bound}
\max\{\deg (H_{0}(\KK(M_{P}))), \deg (H_{1}(\KK(M_{P})))\} \leq -4 D_{R} + (8D_{R}+\deg U)P.
\end{equation}

Now, we make use of the first statement of Proposition~\ref{homprop-4.2} and obtain
\begin{align*}
\deg (H_{q}(\KK(M_{P}))) &\overset{(\ref{pf-bound})}{\leq} -4 D_{R} + (8D_{R}+\deg U)P + D_{R}q + 5D_{R} + \deg U\\
&\overset{\phantom{(\ref{pf-bound})}}{=} D_{R} + \deg U  + (8D_{R}+\deg U)P + D_{R}q,
\end{align*}
which is what we wanted to show.
\end{proof}

\subsection*{Unmarked covers}

The $G$-action on Hurwitz vectors by simultaneous conjugation induces an action of $G$ on the Hurwitz spaces $\Hur_{G,n\cdot\XI}^{c}$. The corresponding \emph{Hurwitz space for unmarked covers} is defined as the quotient $\mathcal{H}_{G,n\cdot\XI}^{c} = \Hur_{G,n\cdot\XI}^{c}/G$. Now, in general, $G$ does not act freely on Hurwitz vectors. Anyway, for suitable stabilizing elements $U$, homological stability for Hurwitz spaces descends to spaces of unmarked covers, as we will see in this section's theorem.

Note that since the Hurwitz action commutes with conjugation, the $G$-action on Hurwitz vectors gives the ring $R$ and the modules $M_{p}$ the structure of modules over the group ring $A[G]$.

\begin{corollary}\label{thm-unmarked}
Let $A$ be a field whose characteristic is either zero or prime to the order of $G$.
Assume that there is a $G$-invariant element $U\in R$ which for any $p\geq 0$ induces isomorphisms $H_{p}(\Hur_{G, n\cdot\XI}^{c} ; A) \overset{\sim}{\to} H_{p}(\Hur_{G, (n+\deg U)\cdot\XI}^{c} ; A)$ for $n\geq r(p)$. Then,
$H_{p}(\mathcal{H}_{G, n\cdot\XI}^{c} ; A) \cong H_{p}(\mathcal{H}_{G, (n+\deg U)\cdot\XI}^{c} ; A)$ holds in the same range.
\end{corollary}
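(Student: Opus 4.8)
The plan is to deduce the unmarked statement from the marked one by a standard averaging (transfer) argument, exploiting that the characteristic of $A$ is zero or coprime to $|G|$, so that $|G|$ is invertible in $A$.

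First I would record a natural identification of the homology of the unmarked spaces. For each $m$, the covering $\Hur_{G,m\cdot\XI}^{c}\to\BBr_{m\cdot\XI}$ has finite fibre $\cc^{m}$, and $G$ acts on it by permuting sheets; pulling back a finite CW structure on $\Conf_{m\cdot\XI}\simeq\BBr_{m\cdot\XI}$ therefore equips $\Hur_{G,m\cdot\XI}^{c}$ with a CW structure on which $G$ acts cellularly, with every cell stabilizer fixing its cell pointwise. Consequently the cells of $\mathcal{H}_{G,m\cdot\XI}^{c}=\Hur_{G,m\cdot\XI}^{c}/G$ are precisely the $G$-orbits of cells upstairs, and the cellular chain complexes satisfy
$$
C_{*}(\mathcal{H}_{G,m\cdot\XI}^{c};A)\;\cong\;\bigl(C_{*}(\Hur_{G,m\cdot\XI}^{c};A)\bigr)_{G}\;=\;A\otimes_{A[G]}C_{*}(\Hur_{G,m\cdot\XI}^{c};A)
$$
as complexes of $A$-modules. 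Since $|G|$ is invertible in $A$, the ring $A[G]$ is semisimple, $A$ is projective over it, and the coinvariants functor $(-)_{G}$ is exact; hence it commutes with homology and
$$
H_{p}(\mathcal{H}_{G,m\cdot\XI}^{c};A)\;\cong\;\bigl(H_{p}(\Hur_{G,m\cdot\XI}^{c};A)\bigr)_{G}
$$
for all $p$ and all $m$ (and this coincides with the $G$-invariants via the idempotent $\tfrac1{|G|}\sum_{g\in G}g$).

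Next I would observe that multiplication by $U$ respects the $G$-actions. Because the Hurwitz action commutes with simultaneous conjugation, $R$ and the $M_{p}$ are $A[G]$-modules with $g\cdot(r\cdot x)=(g\cdot r)\cdot(g\cdot x)$, and $G$-invariance of $U$ gives $g\cdot(U\cdot x)=U\cdot(g\cdot x)$. Thus for each $n$ the map
$$
U\cdot\colon H_{p}(\Hur_{G,n\cdot\XI}^{c};A)\longrightarrow H_{p}(\Hur_{G,(n+\deg U)\cdot\XI}^{c};A)
$$
is a homomorphism of $A[G]$-modules, which by hypothesis is an isomorphism for $n\ge r(p)$. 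Applying the exact functor $(-)_{G}$ to this isomorphism and using the identification of the previous step yields $H_{p}(\mathcal{H}_{G,n\cdot\XI}^{c};A)\cong H_{p}(\mathcal{H}_{G,(n+\deg U)\cdot\XI}^{c};A)$ for $n\ge r(p)$, which is the assertion.

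I expect the only delicate point to be the first step: checking that $\Hur_{G,m\cdot\XI}^{c}$ admits a cellular $G$-action whose quotient complex is \emph{literally} the $G$-coinvariants of the chains upstairs, rather than merely having isomorphic homology. Once that is in place everything is formal, since $A[G]$ is semisimple. If one prefers to avoid chain-level bookkeeping, the same identification follows from the Borel construction: the map $EG\times_{G}\Hur_{G,m\cdot\XI}^{c}\to\mathcal{H}_{G,m\cdot\XI}^{c}$ is an $A$-homology equivalence because its point-preimages are classifying spaces of the (finite) point stabilizers, which are $A$-acyclic, and the Cartan--Leray spectral sequence of $EG\times_{G}\Hur_{G,m\cdot\XI}^{c}$ collapses onto the bottom row since $H_{i}(G;-)$ vanishes for $i>0$ over $A$.
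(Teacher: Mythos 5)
Your argument is correct and follows exactly the same strategy as the paper: identify $H_{p}(\mathcal{H}_{G,m\cdot\XI}^{c};A)\cong H_{p}(\Hur_{G,m\cdot\XI}^{c};A)_{G}$ by a transfer argument (the paper states this in one line; you supply two independent justifications, the cellular one and the Borel/Cartan--Leray one, both of which are valid), note that $G$-invariance of $U$ makes the stabilization map $A[G]$-linear, and then apply the exact coinvariants functor. This matches the paper's proof, only with the transfer step spelled out in more detail.
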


\begin{proof}
By a transfer argument, we have $H_{p}(\mathcal{H}^{c}_{G,n\cdot\XI};A) \cong H_{p}(\Hur_{G,n\cdot\XI}^{c};A)_{G}$ for all $n,p\geq 0$.
The assumption that $U$ is fixed under the action of $G$, together with the $G$-e\-qui\-va\-ri\-ance of the maps $G^{n\xi}\times G^{\deg U\cdot\xi} \to G^{(n+\deg U)\xi}$, implies that in the stable range, $H_{p}(\Hur_{G, n\cdot\XI}^{c} ; A) \cong H_{p}(\Hur_{G, (n+\deg U)\cdot\XI}^{c} ; A)$ is an isomorphism of $A[G]$-modules. Taking $G$-coinvariants yields the result.
\end{proof}


\section{Application}\label{application}

We work in the same setting as in the previous section.

\subsection*{Connected covers and stable homology}

In the purely abelian case, we have seen in Section~\ref{homstabhurwitz} that Hurwitz spaces \emph{are} homotopy equivalent to colored configuration spaces. In the preprint~\cite{1212.0923}, this is made more general by showing that under certain conditions, the stable homology of the components of Hurwitz spaces is isomorphic to the stable homology of the corresponding colored configuration space. Due to a false reasoning in the application of results from the earlier article~\cite{0912.0325}, the named preprint was withdrawn from the arXiv in~2013. Fortunately, the statement of Theorem~\ref{evwii} remains untouched. We refer to Ellenberg's blog post \cite{Quomod} for an explanation of the mistakes and a clarification which results are still correct.

For $\underline a = (a_{1}, \ldots, a_{t})\in \N^{t}$, we define the Hurwitz vector
\begin{equation}\label{v-vector}
V = V(\underline a) = \prod_{i=1}^{t}\prod_{g \in c_{i}} \left( g^{(a_{i} \ord(g))} \right),
\end{equation}
where the product operation means concatenation of tuples. Now, if there is an $n \in \N$ such that we have $\sum_{g\in c_{i}} a_{i}\ord(g) = n\xi_{i}$ for all $i = 1, \ldots, t$, we have  $V \in \cc^{n}$ up to the action of $\Br_{n\cdot\XI}$.  It is not hard to see that for any $\XI\in\N^{t}$, such an $\underline a$ exists. Hence, in this case, $V$ may be interpreted as an element of $R = R^{A,c}_{G,\XI}$, and we write $V\in R$. By construction, we have $\partial V = 1$, so $V$ is central in $R$.

By $\CHur_{G,n\cdot\underline\xi}^{c} \subset \Hur_{G,n\cdot\underline\xi}^{c}$, we denote the union of connected components of $\Hur_{G,n\cdot\underline\xi}^{c}$ parametrizing covers with full monodromy.

\begin{theorem}[\textsc{Ellenberg--Venkatesh--Westerland}, {\cite[Cor.~5.8.2]{1212.0923}}]\label{evwii}
Suppose that for any $p\geq 0$, the element $V$ from (\ref{v-vector}) induces an isomorphism 
$$
H_{p}(\CHur_{G,n\cdot\underline\xi}^{c};\Q)\overset{\sim}{\to}
H_{p}(\CHur_{G,(n+\deg V)\cdot\underline\xi}^{c};\Q)
$$
for $n\geq r(p)$. Then for any connected component $X$ of $\CHur_{G,n\cdot\underline\xi}^{c}$, the branch point map $X \to \Conf_{n\cdot \underline\xi}$ induces an isomorphism
$$
H_{p}(X;\Q) \overset{\sim}{\to} H_{p}(\Conf_{n\cdot\underline\xi};\Q)
$$
whenever $n\geq r(p)$.
\end{theorem}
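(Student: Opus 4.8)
This is \cite[Cor.~5.8.2]{1212.0923}; here is the shape of the argument I would give. Fix $p$, a value of $n$ with $n\ge r(p)$, and a connected component $X$ of $\CHur_{G,n\cdot\XI}^{c}$. Restricting the finite covering $\CHur_{G,n\cdot\XI}^{c}\to\Conf_{n\cdot\XI}$ (whose fibre is the set of full-monodromy Hurwitz vectors of shape $n\cdot\XI$) to $X$ gives a connected finite covering $b\colon X\to\Conf_{n\cdot\XI}$, of degree $d_{X}$ equal to the size of the corresponding $\Br_{n\cdot\XI}$-orbit. The covering transfer $\mathrm{tr}\colon H_{*}(\Conf_{n\cdot\XI};\Q)\to H_{*}(X;\Q)$ satisfies $b_{*}\circ\mathrm{tr}=d_{X}\cdot\id$, so $b_{*}$ is rationally surjective and $\dim_{\Q}H_{p}(X;\Q)\ge\dim_{\Q}H_{p}(\Conf_{n\cdot\XI};\Q)$, with $b_{*}$ an isomorphism as soon as equality holds. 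Summing over the components of $\CHur_{G,n\cdot\XI}^{c}$, the theorem reduces to the single inequality
\[
\dim_{\Q}H_{p}(\CHur_{G,n\cdot\XI}^{c};\Q)\ \le\ \bigl(\#\pi_{0}\CHur_{G,n\cdot\XI}^{c}\bigr)\cdot\dim_{\Q}H_{p}(\Conf_{n\cdot\XI};\Q).
\]

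Next I would use the hypothesis to push this into the stable range. Multiplication by the central element $V$ of~\eqref{v-vector} sends each component of $\CHur_{G,m\cdot\XI}^{c}$ into exactly one component of $\CHur_{G,(m+\deg V)\cdot\XI}^{c}$, so in the bases of components the induced map on $H_{0}(-;\Q)$ is given by a $\{0,1\}$-matrix with a single $1$ in each column; by the hypothesis for $p=0$ it is invertible, hence a permutation matrix. Thus for $m\ge r(0)$ multiplication by $V$ is a bijection on sets of components, the cardinality $\#\pi_{0}\CHur_{G,m\cdot\XI}^{c}$ is constant along the $V$-tower, and multiplication by $V$ respects the splitting $H_{p}(\CHur_{G,m\cdot\XI}^{c};\Q)=\bigoplus_{X}H_{p}(X;\Q)$ component by component. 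Combining this with the hypothesised isomorphisms, with~\eqref{tran}, and with the evident compatibility of the branch point maps with the stabilisation maps (which add $\deg V$ clusters of branch points), one sees that $\dim_{\Q}H_{p}(X;\Q)-\dim_{\Q}H_{p}(\Conf_{m\cdot\XI};\Q)$ is constant along the tower; so it is enough to prove the displayed inequality for a single $m$ in the stable range, equivalently in the colimit.

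This colimit computation is the substance of \cite{1212.0923}, and is the step where I expect essentially all of the difficulty to sit. I would approach it through the scanning and group-completion machinery of that paper: present $\bigsqcup_{m}\Hur_{G,m\cdot\XI}^{c}$, up to homology, as a configuration space on the disk labelled by the groupoid attached to $(G,c)$; apply the group-completion theorem to identify the stabilised Hurwitz space with (components of) a double loop space on an explicit Thom-type complex; and verify that, after inverting $V$ and passing to rational coefficients, the reduced homology of that label complex contributes nothing to a full-monodromy component beyond the mere count of components, so that each such component acquires the stable rational homology of a component of $\Conf_{\infty}(\R^{2})$, that is, of $\Conf_{m\cdot\XI}$. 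The genuine obstacle is exactly this rational vanishing on connected components; the transfer argument, the permutation-matrix observation, and the propagation of the dimension count along the $V$-tower are all routine once it is available.

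Putting the pieces together, the rational surjectivity of $b_{*}$ from the first paragraph together with the dimension inequality from the last two forces $b_{*}\colon H_{p}(X;\Q)\xrightarrow{\ \sim\ }H_{p}(\Conf_{n\cdot\XI};\Q)$ for every component $X$ and every $n$ in the common range of the hypothesised Hurwitz-space stability and of~\eqref{tran}; as the latter is the weaker constraint in the situations we care about (cf.\ the ranges in Theorem~\ref{thm-connected}), the bound $n\ge r(p)$ suffices, which is the assertion.
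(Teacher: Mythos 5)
The paper does not prove this theorem: it is quoted verbatim as \cite[Cor.~5.8.2]{1212.0923} and used as a black box (with a surrounding remark that the statement survives the issues that led to the preprint's withdrawal). So there is no internal argument to compare your sketch against.

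Taken on its own terms, the reduction you outline is correct. The transfer for the finite covering $b\colon X\to\Conf_{n\cdot\XI}$ gives $b_{*}\circ\mathrm{tr}=d_{X}\cdot\id$, hence rational surjectivity of $b_{*}$ on each component and the reduction to the single dimension inequality summed over $\pi_{0}$. The observation that a degree-preserving $\{0,1\}$-matrix which is invertible over $\Q$ is a permutation matrix is the right way to see that multiplication by the central element $V$ (with $\partial V=1$, so it genuinely preserves the full-monodromy locus) is eventually a bijection on components, and that the component-by-component comparison with $\dim_{\Q}H_{p}(\Conf_{m\cdot\XI};\Q)$ propagates along the $V$-tower using the stated hypothesis together with~(\ref{tran}) and the homotopy commutativity of the branch point map with the stabilisation. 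You also correctly note that the range in the conclusion requires $r(p)$ to dominate the configuration-space range $2p/\min\XI$, which holds in the intended applications.

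The genuine gap is the one you flag yourself: the colimit computation via scanning and group completion, which is where essentially all the content of \cite{1212.0923} lives, is gestured at but not carried out. Without the statement that, after inverting $V$, the reduced rational homology of the labelling complex contributes nothing beyond the component count on the full-monodromy locus, the dimension inequality is not established and the argument does not close. Given that the paper itself imports this theorem as an external input, deferring that step is a defensible choice of level of detail for this document, but you should say explicitly that what you have written is a reduction of the theorem to that stable vanishing statement, not a proof of it.
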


In Theorem~\ref{the-theorem}, we give a condition for homological stabilization of the sequence $\{\Hur_{G,n\cdot\XI}^{c} \}$, while Theorem~\ref{evwii} is about the subspaces $\CHur_{G,n\cdot\XI}^{c}$ \emph{of connected} covers. In order to make the two theorems compatible, the condition
\begin{equation}\label{all-connected}
\Hur_{G, n\cdot\XI}^{c} = \CHur_{G, n\cdot\XI}^{c} \text{ for all } n\geq 1
\end{equation}
must be satisfied. Now, (\ref{all-connected}) holds if and only if $G$ is \emph{invariably generated} by $c$:

\begin{definition}
We say that $G$ is \emph{invariably generated} by a tuple $c = (c_{1}, \ldots, c_{t})$ of distinct conjugacy classes in $G$ if for all choices of elements $g_{i} \in c_{i}$, $i = 1, \ldots, t$, $G$ is generated by $g_{1}, \ldots, g_{t}$. In this case, we call $c$ an \emph{invariable generation system} for $G$.
\end{definition}

By Jordan's theorem (\cite{Jordan}), a list of all of its nontrivial conjugacy classes invariably generates $G$.

The following proposition is a slight adaptation of a standard result about Hurwitz action orbits.

\begin{proposition}\label{conn-standard}
If $c$ invariably generates $G$, there is an $N\in\N$ such that for all $n \geq N$, concatenation with any $g \in \cc$ yields a bijection
$$
\cc^{n}/\Br_{n\cdot\XI} \overset{1:1}{\longleftrightarrow} \cc^{n+1}/\Br_{(n+1)\cdot\XI}.
$$
Thus, any Hurwitz vector $U\in R = R^{\Z,c}_{G,\XI}$ induces an isomorphism
$$
H_{0}(\Hur_{G, n\cdot\XI}^{c} ; \Z ) \cong H_{0}(\Hur_{G, (n+\deg U)\cdot\XI}^{c} ; \Z)
$$
for all $n \geq N$. In particular, $D_{R}(U)$ is finite for any vector $U \in R$ with $\partial U = 1$.
\end{proposition}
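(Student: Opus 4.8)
The plan is to reduce the whole statement to a purely combinatorial fact about Hurwitz orbits and then invoke the stabilization theorem of Conway and Parker. First I would pass to the monoid $\mathfrak{s}=\bigsqcup_{n\ge 0}\cc^{n}/\Br_{n\cdot\XI}$ of Remark~\ref{comb-descr}: since $\Hur_{G,n\cdot\XI}^{c}$ is a covering space of $\BBr_{n\cdot\XI}$ with fibre $\cc^{n}$, its set of path components is $\cc^{n}/\Br_{n\cdot\XI}$, so $H_{0}(\Hur_{G,n\cdot\XI}^{c};\Z)=R_{n}$ is the free $\Z$-module on that set, and multiplication by a degree-one basis element $r(g)$, $g\in\cc$, is the $\Z$-linear extension of the concatenation map $\cc^{n}/\Br_{n\cdot\XI}\to\cc^{n+1}/\Br_{(n+1)\cdot\XI}$. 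A bijection of bases induces an isomorphism of free $\Z$-modules, so the whole proposition follows once one establishes the first assertion: there is an $N$, independent of $g$, so that concatenation with any $g\in\cc$ is a bijection $\cc^{n}/\Br_{n\cdot\XI}\to\cc^{n+1}/\Br_{(n+1)\cdot\XI}$ for all $n\ge N$. Granting this, writing a Hurwitz vector $U\in R$ of degree $d$ as a concatenation $g_{1}\cdots g_{d}$ of blocks $g_{j}\in\cc$, multiplication by $U$ becomes the composite of the $d$ corresponding concatenation maps, hence an isomorphism $R_{n}\xrightarrow{\sim}R_{n+d}$ for $n\ge N$; this gives the stated $H_{0}$-isomorphism and, for $U$ with $\partial U=1$ (so that $U$ is central), shows that $R/UR$ is supported in degrees $<N+d$ and $R[U]$ in degrees $<N$, whence $D_{R}(U)\le N+d-1<\infty$.

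The core step is thus the first assertion, and here I would appeal to the Conway--Parker stabilization theorem for Hurwitz orbits rather than reprove it. The relevant observation is that, because $c$ invariably generates $G$, every tuple in $\cc^{n}$ with $n\ge 1$ has all of $G$ as its global monodromy, so we are squarely in the setting of that theorem; the single-class case is \cite[Lemma~3.5]{0912.0325}, and the version for tuples of conjugacy classes generating $G$ is available through \cite{1212.0923}. The theorem produces an $N$ beyond which the Hurwitz orbits on $\cc^{n}$ are classified by concatenation-invariant data --- the boundary monodromy together with a lifting invariant in a torsor under the reduced Schur multiplier of $G$ relative to $c$ --- and all admissible values of this data are realized. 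Concatenation with a fixed $g\in\cc$ acts bijectively on the set of admissible values (it translates the boundary monodromy by $\partial g$ and acts compatibly on the lifting invariant), hence is a bijection on orbit sets for all $n\ge N$, uniformly in $g$ since $\cc$ is finite and the threshold depends only on $G$ and $c$.

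I expect the one genuinely substantial point to be this last step: verifying that the classical Conway--Parker statement, usually phrased for a single non-splitting conjugacy class, applies to an invariably generating tuple of classes with the prescribed shape $n\cdot\XI$. The shape constraint is preserved by both the Hurwitz action and concatenation, and invariable generation supplies the generation hypothesis, so the adaptation should be routine; the actual work lies in tracking the dependence of the threshold $N$ on $G$ and $c$ and the behaviour of the lifting invariant under shape-respecting concatenation. Everything else --- the passage from orbit bijections to free-module isomorphisms, and then to the finiteness of $D_{R}(U)$ --- is the bookkeeping sketched above.
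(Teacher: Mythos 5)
Your reduction of the proposition to its first assertion, and the subsequent bookkeeping (orbit bijections $\Rightarrow$ free-module isomorphisms $\Rightarrow$ finiteness of $D_{R}(U)$ via the degree bounds on $R/UR$ and $R[U]$), is correct and agrees with the paper, which notes that the last two statements are direct consequences of the first. The problem is that the first assertion --- the core of the proposition --- is not actually established in your proposal. You invoke the Conway--Parker stabilization in the refined form that classifies stable Hurwitz orbits by boundary monodromy plus a lifting invariant in a torsor under the reduced Schur multiplier, and you explicitly flag the adaptation of that theorem to an invariably generating tuple with prescribed shape $n\cdot\XI$ as ``the one genuinely substantial point,'' asserting it ``should be routine'' without carrying it out. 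That is exactly the gap. Worse, your named reference for the multi-class version, \cite{1212.0923}, is the withdrawn preprint; the paper itself (Section~\ref{application}) warns that it contained false reasoning and is careful to use only the single, specific statement of Theorem~\ref{evwii} from it. Citing it for a general multi-class Conway--Parker theorem is not safe, and \cite[Lemma~3.5]{0912.0325} only covers $t=1$ for a non-splitting class, which is not the same hypothesis as invariable generation.

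The paper's actual proof avoids the Conway--Parker machinery entirely and is considerably lighter. Following the method of \cite[Prop.~3.4]{0912.0325}, it shows directly that for $n$ sufficiently large, every $\underline h\in\cc^{n+1}$ is equivalent under the full Hurwitz action to a tuple whose first entry is any prescribed $g_{0}\in G$ of the right conjugacy type: pick an element $g_{0}'$ in the same class that occurs at least $\ord(g_{0}')+1$ times, pull $\ord(g_{0}')$ copies to the front, observe that the remaining entries still invariably generate $G$ (so the invariable-generation hypothesis is used here, not a non-splitting one), and use explicit braids to conjugate the front block --- which has trivial boundary --- by any element of $G$. This gives surjectivity of each concatenation map $\cc^{n}/\Br_{n\cdot\XI}\to\cc^{n+1}/\Br_{(n+1)\cdot\XI}$ for $n\gg 0$; since these finite sets then have weakly decreasing cardinality, their sizes stabilize and the surjections become bijections. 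No lifting invariant, Schur multiplier, or complete orbit classification is needed. To fix your proposal you should either reproduce this direct surjectivity-plus-finiteness argument, or genuinely prove (not merely assert as routine) the multi-class Conway--Parker statement you want to cite.
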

\begin{proof}
We follow the proof in \cite[Prop.~3.4]{0912.0325}, where the first statement is proved for $t = 1$. The last two statements are direct consequences of the first one.

Let $\underline h \in \cc^{n+1}$. We need to show that for $n$ sufficiently large, there is a tuple $\underline h' \in \cc^{n}$ such that $\underline h$ is equivalent under the $\Br_{n\cdot\XI}$-action to $(g, \underline h')$. This shows that the maps $\cc^{n}/\Br_{n\cdot\XI} \to \cc^{n+1}/\Br_{(n+1)\cdot\XI}$ given by concatenation with $g = (g_{1}, \ldots, g_{\xi})$ are surjective for $n \gg 0$; since the involved sets are finite, it follows that these maps are eventually bijective.

In the following, we work with the full $\Br_{n\xi}$-action. If we construct a tuple $(g, \underline h'')$ which is equivalent under the $\Br_{n\xi}$-action to $\underline h$, there is another braid which transforms~$\underline h''$ to an element of $\cc^{n}$, since the Hurwitz action permutes conjugacy types. Thus, it suffices to show that we can realize any $g_{0}\in G$ as the first entry of a tuple which is $\Br_{n\xi}$-equivalent to $\underline h$; the claim follows by successive application of this property.

Assume $g_{0} \in c_{1}$. For $n\gg 0$, there exists an element $g'_{0} \in c_{1}$ that appears at least $d+1 = \ord(g'_{0}) + 1$ times in $\underline h$. We may use the Hurwitz action to pull $d$ of these elements to the front of $\underline h$, resulting in a new tuple $({g'_{0}}^{(d)}, \tilde h_{1}, \ldots, \tilde h_{n\xi - d})$. By the invariable generation property, the elements $\tilde h_{1}, \ldots, \tilde h_{n\xi - d}$ generate $G$ (note that $g'_{0}$ appears at least once in these last $n\xi - d$ entries).

Now for all $i = 1, \ldots, n\xi -d$, there is a braid $\sigma_{i}\in\Br_{n\xi}$ which satisfies
$$
\sigma_{i}\cdot ({g'_{0}}^{(d)}, \tilde h_{1}, \ldots, \tilde h_{n\xi - d}) = ((\tilde h_{i}g'_{0}\tilde h_{i}^{-1})^{(d)}, \tilde h_{1}, \ldots, \tilde h_{n\xi - d}).
$$
It is given by
$$
\sigma_{i} =  \alpha_{i}^{-1} (\sigma_{d+i-1} \cdots \sigma_{i+1} \sigma_{i}^{2} \sigma_{i+1} \cdots \sigma_{d+i-1}) \alpha_{i},
$$
where $\alpha_{i}$ pulls the $d$-tuple $({g'_{0}}^{(d)})$ in front of $\tilde h_{i}$, which works since the boundary of $({g'_{0}}^{(d)})$ is trivial.

Thus, the Hurwitz action may conjugate the elements $g'_{0}$ at the beginning of the tuple by any element in the group generated by $\tilde h_{1}, \ldots, \tilde h_{n\xi - d}$, which is equal to $G$. Thus, we may establish $g_{0}$ as the first entry.
\end{proof}

We are now ready to conclude:

\begin{theorem}\label{thm-connected}
Let $G$ be a finite group, $c = (c_{1}, \ldots, c_{t})$ an invariable generation system for $G$, and $\XI = (\xi_{1}, \ldots, \xi_{t})\in\N^{t}$.
Let $U \in R = R^{\Z.c}_{G,\XI}$ be a Hurwitz vector with $\partial U = 1$.
Then for any $p \geq 0$, there are isomorphisms
\begin{align*}
H_{p}(\Hur_{G, n\cdot\XI}^{c}; \Z ) &\cong H_{p}(\Hur_{G, (n+\deg U)\cdot\XI}^{c}; \Z )\\
H_{p}(\Hur_{G, n\cdot\XI}^{c}; \Q ) &\cong H_{p}(\Hur_{G, (n+1)\cdot\XI}^{c}; \Q )
\end{align*}
for $n> (8D_{R}(U) + \deg U)p + 7D_{R}(U) + \deg U$. For $b = b_{0}(\Hur_{G, (D_{R}(U) + 1)\cdot\XI}^{c})$,
$$
H_{p}(\Hur_{n\cdot\XI}^{c};\Q) \cong H_{p}( \Conf_{n\cdot\XI}; \Q ) \otimes_{\Q} \Q^{b}
$$
in the same range.
\end{theorem}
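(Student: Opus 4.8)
The plan is to assemble the three displayed statements in Theorem~\ref{thm-connected} from the results developed earlier, using the hypothesis that $c$ invariably generates $G$ to verify the applicability of Theorem~\ref{the-theorem} and Theorem~\ref{evwii}. First I would observe that since $\partial U = 1$, the element $U$ is central in $R = R^{\Z,c}_{G,\XI}$, and by Proposition~\ref{conn-standard} the quantity $D_R(U)$ is finite. It remains to check $D_R(U)>0$: this holds because $c$ has a nontrivial conjugacy class, so $\cc$ has more than one $\Br_{\XI}$-orbit (different boundary monodromies), hence $R$ is not a polynomial ring and $D_R(U)$ cannot be zero (cf.\ the discussion of the purely abelian case). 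With $D_R(U)$ positive and finite, Theorem~\ref{the-theorem} applies verbatim and gives the first (integral) isomorphism in the stated range $n > (8D_R(U)+\deg U)p + 7D_R(U)+\deg U$.

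Next I would handle the rational statement. The key point is that, because $c$ invariably generates $G$, condition~(\ref{all-connected}) holds, i.e.\ $\Hur_{G,n\cdot\XI}^{c} = \CHur_{G,n\cdot\XI}^{c}$ for all $n\geq 1$; every Hurwitz vector in $\cc^{n}$ has full monodromy by definition of invariable generation. Thus the $\Q$-coefficient version of the first isomorphism (which follows from Theorem~\ref{the-theorem} with $A=\Q$, noting $D_R(U)$ computed over $\Z$ and over $\Q$ agree in the relevant range, or simply re-running the argument over $\Q$) is precisely the hypothesis of Theorem~\ref{evwii} for the element $U$ in place of $V$: one just needs that $U$ has the same formal role, namely $\partial U = 1$, so $U$ is central. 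Applying Theorem~\ref{evwii} componentwise yields, for each connected component $X$ of $\Hur_{G,n\cdot\XI}^{c}$, an isomorphism $H_p(X;\Q) \cong H_p(\Conf_{n\cdot\XI};\Q)$ in the range $n > (8D_R(U)+\deg U)p + 7D_R(U)+\deg U$. Summing over the $b_0(\Hur_{G,n\cdot\XI}^{c})$ components gives
$$
H_p(\Hur_{G,n\cdot\XI}^{c};\Q) \cong H_p(\Conf_{n\cdot\XI};\Q)\otimes_\Q \Q^{b_0(\Hur_{G,n\cdot\XI}^{c})}
$$
in that range.

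To finish, I would identify the number of components with the stated constant $b = b_0(\Hur_{G,(D_R(U)+1)\cdot\XI}^{c})$. By Proposition~\ref{conn-standard}, concatenation with any fixed $g\in\cc$ induces a bijection $\cc^{n}/\Br_{n\cdot\XI} \to \cc^{n+1}/\Br_{(n+1)\cdot\XI}$ for $n\geq N$; more sharply, since $H_0(\Hur_{G,n\cdot\XI}^{c};\Z) = R_n$ and $D_R(U)$ is finite, multiplication by a degree-one element stabilizes $\dim_\Q R_n$ once $n$ exceeds $\deg(R/UR) \leq D_R(U)$, so $b_0(\Hur_{G,n\cdot\XI}^{c})$ is constant for $n \geq D_R(U)+1$; in particular it equals $b$ throughout the homological stable range (which starts well above $D_R(U)+1$). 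Substituting $b$ for $b_0(\Hur_{G,n\cdot\XI}^{c})$ gives the last displayed isomorphism. Finally, the $\Q$-isomorphism $H_p(\Hur_{G,n\cdot\XI}^{c};\Q) \cong H_p(\Hur_{G,(n+1)\cdot\XI}^{c};\Q)$ in the stated range follows by combining this tensor decomposition with the stability~(\ref{tran}) for configuration spaces, $H_p(\Conf_{n\cdot\XI};\Q)\cong H_p(\Conf_{(n+1)\cdot\XI};\Q)$ for $n \geq \frac{2p}{\min\XI}$, which is dominated by the stated range.

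I expect the main obstacle to be the careful bookkeeping that the three different ``stable ranges'' in play --- the range from Theorem~\ref{the-theorem}, the range $r(p)$ entering Theorem~\ref{evwii}, and the configuration-space range from~(\ref{tran}) --- are all simultaneously satisfied for $n > (8D_R(U)+\deg U)p + 7D_R(U)+\deg U$, and that the constant $b$ is genuinely independent of $n$ in that range (this is where one uses finiteness of $D_R(U)$ to pin down where $b_0$ has already stabilized). The homotopy-theoretic and combinatorial inputs are all available; the work is in checking that the hypotheses of Theorem~\ref{evwii} are met with $U$ rather than $V$, which reduces to $\partial U = 1$ together with the $\Q$-stability already proved.
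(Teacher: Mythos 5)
Your proposal has two genuine gaps, one of which is the key idea of the paper's proof.

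First, your argument that $D_R(U)>0$ is incorrect when $G$ is abelian. You claim ``$c$ has a nontrivial conjugacy class, so $\cc$ has more than one $\Br_{\XI}$-orbit''; but ``nontrivial'' here means $\neq\{e\}$, not of cardinality $>1$. If $G$ is abelian, every $c_i$ is a singleton, $\cc$ is a single point, there is exactly one $\Br_{\XI}$-orbit, and $R\cong\Z[x]$ can certainly have $D_R(U)=0$ when $\deg U = 1$. The paper dispatches this case separately: it invokes Corollary~\ref{abelian-case} for abelian $G$ and only then assumes $D_R(U)>0$ (which at that point is equivalent to the non-abelian situation). Your argument never covers the abelian case, and the claimed implication fails there.

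Second, and more seriously, you apply Theorem~\ref{evwii} ``for the element $U$ in place of $V$.'' As stated, Theorem~\ref{evwii} requires stability under the \emph{specific} element $V=V(\underline a)$ from~(\ref{v-vector}); there is no license in this paper to replace $V$ by an arbitrary central $U$ with $\partial U = 1$, and doing so would require reproving the theorem from the withdrawn preprint. Even granting your substitution, the range $r(p)$ in Theorem~\ref{evwii} is the $V$-stable range, namely $(8D_R(V)+\deg V)p + 7D_R(V)+\deg V$, which is a priori unrelated to and potentially larger than the $U$-stable range you are trying to prove things in. You flag this ``bookkeeping'' issue yourself but do not resolve it, and in fact it cannot be resolved by bookkeeping alone. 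The paper's proof does something different: it observes that $V$ is \emph{also} a $\Z$-stabilizing element (via Proposition~\ref{conn-standard}, because $\partial V = 1$), fixes $n$ in the $U$-stable range, climbs to $n+k\deg U$ lying in the $V$-stable range (using $U$-stability from Theorem~\ref{the-theorem}), applies Theorem~\ref{evwii} with $V$ there, slides to $(n+k\deg U+1)$ via~(\ref{tran}), applies Theorem~\ref{evwii} again, and descends back to $n+1$ via $U$-stability. That zig-zag is the crucial device your proposal is missing; it is what allows the conclusions to hold for all $n$ in the (typically smaller) $U$-range while Theorem~\ref{evwii} is only invoked where its own (larger) range is satisfied. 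Your identification of the constant $b$ as the stabilized number of components is fine, but it rides on an unproved application of Theorem~\ref{evwii}.
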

\begin{proof}
For $G$ abelian, an even stronger statement follows from Corollary~\ref{abelian-case}.

We may thus assume that $D_{R}(U)>0$.
The last statement of Proposition~\ref{conn-standard} tells us that the assumptions of Theorem~\ref{the-theorem} are satisfied for $U$. Indeed, for $p\geq 0$,
\begin{equation*}\label{periodical-stab}
H_{p}(\Hur_{G, n\cdot\XI}^{c}; \Z ) \cong H_{p}(\Hur_{G, (n+\deg U)\cdot\XI}^{c}; \Z )
\end{equation*}
as long as $n> (8D_{R}(U) + \deg U)p + 7D_{R}(U) + \deg U$.

By definition of $D_{R}(U)$, the number $b = b_{0}(\Hur_{G, n\cdot\XI}^{c})$ of connected components is stable for $n> D_{R}(U)$. We note that $V = V(\underline a) \in R$ is also a $\Z$-stabilizing element for $R$ by Proposition~\ref{conn-standard} and the fact $\partial V = 1$.

Fix $p\geq 0$ and $n> (8D_{R}(U) + \deg U)p + 7D_{R}(U) + \deg U$. Now, $n$ is \emph{always} in the stable range for $\{\Conf_{n\cdot\XI} \mid n\geq 0\}$, given by $n \geq \frac{2p}{\min\XI}$. Indeed, we have $D_{R}(U)>0$ because $G$ is non-abelian.

We choose $k \geq 0$ such that $n+k \deg U$ is in the stable range for the stabilizing element~$V$. We obtain
\begingroup
\allowdisplaybreaks
\begin{align*}
H_{p}(\Hur_{n\cdot\XI}^{c};\Q)  \overset{\phantom{((}\ref{the-theorem}\phantom{))}}{\cong}  &H_{p}(\Hur_{(n+k\deg U)\cdot\XI}^{c};\Q) \\
\overset{\phantom{(}\ref{evwii}\phantom{)}}{\cong} &H_{p} (\Conf_{(n+k\deg U)\cdot \XI};\Q) \otimes_{\Q} \Q^{b} \\
\overset{\text{(\ref{tran})}}{\cong}  &H_{p} (\Conf_{n\cdot \XI};\Q) \otimes_{\Q} \Q^{b} \\
\overset{\text{(\ref{tran})}}{\cong}  &H_{p} (\Conf_{(n+k\deg U+1)\cdot \XI};\Q) \otimes_{\Q} \Q^{b} \\
\overset{\phantom{(}\ref{evwii}\phantom{)}}{\cong} &H_{p}(\Hur_{(n+k\deg U+1)\cdot\XI}^{c};\Q) \\
\overset{\phantom{((}\ref{the-theorem}\phantom{))}}{\cong}  &H_{p}(\Hur_{(n+1)\cdot\XI}^{c};\Q),
\end{align*}
\endgroup
which yields the remaining assertions.
\end{proof}

\subsection*{Outlook} The present paper may be understood as a sequel to the topological part of \cite{0912.0325}. There are still open questions regarding the homological stabilization for Hurwitz spaces:

\begin{itemize}
\item[-] 
For $t=1$, the condition from Theorem~\ref{the-theorem} is equivalent to the non-splitting property. Is there an analogous translation for the general case?
\item[-] 
Until now, we only considered the \emph{diagonal} stabilization direction, i.e., we considered the sequence of shapes $\{n\cdot\XI\}$. Is there also a general theorem for the stabilization in the direction of a fixed conjugacy class, i.e., sequences $\{\XI + ne_{i}\}$, where $e_{i}$ is a unit vector? This is motivated by the corresponding result for colored configuration spaces in \cite{1312.6327}.
\item[-] 
Does the homological stabilization carry over to base spaces of higher genus?
\item[-] 
With Harer's theorem for $\mathcal{M}_{g}$ in mind, it is a natural question whether homological stabilization happens not only in the direction of the number of branch points, but also in the direction of the genus of the covered surface (\emph{genus stabilization}).
For the zeroth Betti number, this has been tackled in the articles \cite{MR3428412} and \cite{1301.4409} in the slightly different setting of the substrata $\mathcal{M}_{g}(G)$ of $\mathcal{M}_{g}$ which contain the algebraic curves admitting a faithful action by a fixed finite group $G$.
\end{itemize}

\bibliography{stability_literature}
\bibliographystyle{alpha} 

\end{document}